\DeclarePairedDelimiter\floor{\lfloor}{\rfloor}
\newcommand{\verti}[1]{{\left\vert\kern-0.25ex\left\vert\kern-0.25ex\left\vert #1 
    \right\vert\kern-0.25ex\right\vert\kern-0.25ex\right\vert}}
\newtheorem{theo}{Theorem}[section]
\newtheorem{lemme}[theo]{Lemma}
\newtheorem{propo}[theo]{Proposition}
\newtheorem{cor}[theo]{Corollary}
\newtheorem{hyp}[theo]{Assumptions}
\newtheorem{defi}[theo]{Definition}
\newtheorem{exe}[theo]{Example}
\newtheorem{open}{Open Problem}
\newtheorem{nb}[theo]{Remark}
\newtheorem{conj}[theo]{Conjecture}
\font\teneuf=eufm10 at 12pt \font\seveneuf=eufm7 at 8pt
\font\fiveeuf=eufm5 at 6pt
\newfont{\secgoth}{eufm10 at 16pt}
\def \bq {\begin{equation}}
\def \eq {\end{equation}}
\def \leq {\leqslant}
\def \geq {\geqslant}
\def \v {v}
\def \N {\mathbb{N}}
\def \ind {\mathbf{1}}
\def \S {\mathbb{S}}
\numberwithin{equation}{section}
\def\lp {L^1_+}
\def\lm {L^1_-}
\def \d {\mathrm{d}}
\def \D {\mathscr{D}}
\def \C {\mathbb{C}}
\def \Rs {\mathcal{R}}
\def \R {\mathbb{R}}
\def \M {\mathcal{M}}
\def \G {\bm{G}}
\def \l+ {L^1_+}
\def \l- {L^1_-}
\renewcommand{\epsilon}{\varepsilon}
\def \ds {\displaystyle}
\def \l {\lambda}
\def \T {\mathsf{T}}
\def \B {\mathsf{B}}
\def \H {\mathsf{H}}
\def \pO {\partial\Omega}
\def \e {\varepsilon}
\def \H {\mathsf{H}}
\begin{document}
\title[Diffuse boundary conditions]{On eventual compactness of collisionless kinetic semigroups with  velocities bounded away from zero}

 \author{B. Lods}

 \address{Universit\`{a} degli
 Studi di Torino \& Collegio Carlo Alberto, Department of Economics, Social Sciences, Applied Mathematics and Statistics ``ESOMAS'', Corso Unione Sovietica, 218/bis, 10134 Torino, Italy.}\email{bertrand.lods@unito.it}

 \author{M. Mokhtar-Kharroubi}

 \address{Universit\'e de Franche-Comt\'e, Equipe de Math\'ematiques, CNRS UMR 6623, 16, route de Gray, 25030 Besan\c con Cedex, France
}
\email{mustapha.mokhtar-kharroubi@univ-fcomte.fr}

\maketitle

\begin{abstract}
In this paper, we consider the long time behaviour of collisionless kinetic equation with stochastic diffuse boundary operators for velocities bounded away from zero. We show that under suitable reasonable conditions, the semigroup is eventually compact. In particular, without any irreducibility assumption, the semigroup converges exponentially to the spectral projection associated to the zero eigenvalue as $t \to \infty.$ This contrasts drastically to the case allowing arbitrarily slow velocities 
 for which the absence of a spectral gap yields at most algebraic rate of convergence to equilibrium. Some open questions are also mentioned. \\
\noindent \textbf{Keywords:}  Kinetic equation; Boundary operators; Non-zero velocities; Convergence to equilibrium.
\end{abstract}

%
%
 \section{Introduction }
The present paper is the third of a program initiated in \cite{LMR} and pursued in \cite{LMKJMPA} on the systematic study of $L^{1}$-solutions $\psi(t)$ to the transport equation
\begin{subequations}\label{1}
\begin{equation}\label{1a}
\partial_{t}\psi(x,v,t) + v \cdot \nabla_{x}\psi(x,v,t)=0, \qquad (x,v) \in \Omega \times V, \qquad t \geq 0
\end{equation} 
with initial data
\begin{equation}\label{1c}\psi(x,v,0)=\psi_0(x,v), \qquad \qquad (x,v) \in \Omega \times V,\end{equation}
under \emph{diffuse  boundary conditions}
\begin{equation}\label{1b}
\psi_{|\Gamma_-}=\mathsf{H}(\psi_{|\Gamma_+}),
\end{equation}\end{subequations}
where $\Omega$ is a bounded open subset of $\R^{d}$ and $V$ is a  given closed subset of $\R^{d}$ (see Assumptions \ref{hypO} for major details), 
$$\Gamma _{\pm }=\left\{ (x,v)\in \partial \Omega \times V;\ \pm
v \cdot n(x)>0\right\}$$
($n(x)$ {being} the outward unit normal at $x\in \partial \Omega$)  and $\mathsf{H}$  {is
a linear 
boundary operator relating the
outgoing and incoming fluxes $\psi_{\mid \Gamma _{+}}$ and $\psi_{\mid
\Gamma _{-}}$ in the domain $\Omega$.\medskip

Our main assumption on the phase space is summarized in the following
\begin{hyp}\label{hypO} The phase space $\Omega \times V$ is such that
\begin{enumerate} 
\item $\Omega
\subset \R^{d}$ $(d\geq 2)$ is an open and \emph{bounded} subset with $\mathcal{C}^{1}$ boundary $\partial \Omega 
$.
\item  $V$ is the support of a nonnegative locally finite Borel measure $\bm{m}$  and there exists some $r_{0} >0$ such that
\begin{equation}\label{eq:vro}
|v| \geq r_{0} \qquad \forall v \in V.\end{equation}
\item The measure $\bm{m}$ is absolutely continuous with respect to the Lebesgue measure over $\R^{d}$ and  is orthogonally invariant (i.e. invariant under the action of the orthogonal group of matrices in $\R^{d}$)\footnote{This implies of course that also $V$ is an orthogonally invariant subset of $\R^{d}$}, i.e. there exists a radially symmetric function $\varpi(v)=\varpi(|v|)$ such that
$$\bm{m}(\d v)=\varpi(|v|)\d v.$$
\end{enumerate}
In the sequel, we denote by 
$$X:=L^{1}(\Omega \times V\,,\,\d x\otimes \bm{m}(\d v))$$
endowed with its usual norm $\|\cdot\|_{X}.$
\end{hyp}

With respect to our previous contributions, the main novelty of the present paper lies in assumption \eqref{eq:vro} which, since $V$ is a closed subset of $\R^{d}$, is equivalent to $0 \notin V$.

This corresponds to the physical situation of a gas in a vessel for which particle velocities are bounded away from zero as it occurs for instance in the study of kinetic neutron transport in nuclear reactors \cite{mmk}. Heuristically, a particle starting from $\Omega$ with given velocity $v$ will reach the boundary $\pO$ in some finite time and suffer collision with the boundary which will induce a very fast thermalization of the gas. \medskip

Our main scope for the present paper is to give a rigorous justification of this heuristic consideration and show that, under suitable assumptions on the boundary operator $\H$, the convergence to equilibrium for solution to \eqref{1} is \emph{exponential}. This will be done by a careful spectral analysis of the transport operator $\T_{\H}$ associated to \eqref{1} (see Section \ref{sec:prel} for precise functional setting and definitions) combined with some compactness properties of the $C_{0}$-semigroup associated to \eqref{1}. It is important to emphasize already that our approach does not resort to any kind of irreducibility properties of the semigroup. This is in contrast with the framework adopted in our previous contributions. In particular, our result covers situations more general than the mere return to equilibrium but deals rather with the general asymptotic properties of the $C_{0}$-semigroup governing \eqref{1}.\medskip

\subsection{Related literature} Deriving the precise rate of convergence to equilibrium for linear or nonlinear kinetic equations is of course a problem of paramount importance for both theoretical and applied study of kinetic models. This problem has a long history for collisional models for which both qualitative and quantitative approaches have been proposed (see \cite{dolbeault, DV05,mmk,neuman}). 

 For collisionless kinetic equations for which thermalization is driven by boundary effects, the literature on the topic is more recent. We refer the reader to \cite{bernou1,bernou3,bernou2,kim,LMKJMPA} for a complete overview of the literature on the topic and mention here only the pioneering works \cite{aoki,liu}.

For general domains, a general theory on the existence of an invariant density and its asymptotic
stability (i.e. convergence to equilibrium) has been obtained recently \cite{LMR} (see also earlier one-dimensional
results \cite{MKR}). More precisely, whenever the $C_{0}$-semigroup $\left(U_{\H}(t)\right)_{t\geq0}$ associated to $\T_{\H}$ is \emph{irreducible} we proved in \cite{LMR} that there exists  a unique invariant density ${\Psi}_{\mathsf{H}} \in \D(\mathsf{T}_{\mathsf{H}})$ with 
$${\Psi}_{\mathsf{H}}(x,v) >0 \qquad \text{ for a. e. } (x,v) \in \Omega \times V, \qquad \int_{\Omega\times V}\Psi_{\mathsf{H}}(x,v)\d x\otimes\bm{m}(\d v)=1$$
and\begin{equation}\label{eq:ergodic1}
\lim_{t \to \infty}\left\|U_{\mathsf{H}}(t)f -\mathbf{P}_{0}f\right\|_{X}=0, \qquad  \forall f \in X\end{equation}
 where $\mathbf{P_{0}}$ denotes the ergodic projection (see \eqref{eq:proj-ergo} for the precise definition).

In our contribution \cite{LMKJMPA}, using an explicit representation of the semigroup $\left(U_{\H}(t)\right)_{t\geq0}$  obtained recently in \cite{luisa} as well as some involved tauberian approach, we obtain explicit rates of convergence to
equilibrium for solutions to \eqref{1} under mild assumptions on the initial datum $\psi_{0}$. The ideas introduced in \cite{LMKJMPA} are applied in the present contribution to deal with non zero velocities.

In most of the existing literature, arbitrarily slow particles
are taken into account. In particular, the return to equilibrium can be made arbitrarily slow. The existence of too many slow particles is the reason for the slow return to equilibrium in the case of a collisionless gas in a container with constant wall temperature as numerically observed in particular in \cite{tag}. More specifically, quoting from \cite{tag}, ``fast molecules hit the boundary and are thermalized quickly, whereas it
takes a long time for slow molecules to interact with the boundary.'' 
For the specific case studied in this paper, i.e. 
$$|v| \geq r_{0} \qquad \forall v \in V$$
slow particles are clearly not taken into account. For this case, the literature is scarce. We mention, for collisional linear kinetic equation, the pioneering work \cite{jorgens} which obtains also the eventual compactness of the semigroup governing the collisional transport equation with \emph{absorbing} boundary conditions. 

For the collisionless model \eqref{1} studied here, we  mention that an exponential convergence to equilibrium has been obtained in \cite{aoki} for a model of radiative transfer (corresponding to unitary velocities, i.e. $V$ is the unit sphere of $\R^{d}$). The very elegant proof of \cite{aoki} consists in reducing the problem to the study of a renewal integral equation for a scalar unknown quantity. Such a method exploits extensively several symmetry properties of the domain $\Omega$ and seems to apply only for spherically symmetric domain under some isotropy of the initial condition $\psi_{0}$ in \eqref{1c}.  We also wish to point out that related mono-energetic models (for which $V$ is the unit sphere) have been extensively studied in the probability literature in which they are referred to as ``stochastic billiards''. The speed of convergence of such stochastic process towards its invariant
distribution have been established, for various geometry of $\Omega$ in a seminal paper \cite{eva99} and in the more recent contributions \cite{comets,costa,fetique}.

\subsection{Our contribution} Let us make our assumptions more precise together with our main result. With respect to our previous contribution \cite{LMR}, we do not consider abstract and general boundary operator here but focus our attention on the specific case of a diffuse boundary operator of the following type:
\begin{hyp}\label{hypH}
The boundary operator $\H\::L^{1}(\Gamma_{+}\,,\,\d\mu_{+}) \to L^{1}(\Gamma_{-}\,,\,\d\mu_{-})$ is an isotropic diffuse operator ($\d\mu_{\pm}$ are positive measures on $\Gamma_{\pm}$ see Section \ref{sec:prel}), i.e. it is given by
$$\H\psi(x,v)= \int_{v'\cdot n(x) >0}\bm{k}(x,|v|,|v'|)\psi(x,v')|v'\cdot n(x)|\bm{m}(\d v'), 	\qquad (x,v) \in \Gamma_{-}$$
 where the kernel $\bm{k}(x,|v|,|v'|)$ is nonnegative and measurable with
\begin{equation}\label{eq:normalise}
\int_{v\cdot n(x) <0}\bm{k}(x,|v|,|v'|)|v\cdot n(x)|\bm{m}(\d v)=1, \qquad \forall (x,v') \in \Gamma_{+}.\end{equation}
\end{hyp}

\medskip

We refer to Section \ref{sec:exam} for various examples  of diffuse boundary operators of physical interest covered by our results. We will often use the abuse of notation 
$\bm{k}(x,v,v')=\bm{k}(x,|v|,|v'|),$ keeping in mind that the kernel is isotropic with respect to each velocity variables. This isotropy is  simplifying assumption but more general kernels can be handled by our approach as illustrated in \cite{LMKJMPA}. We preferred here to adopt this simplified framework avoiding too technical computations. 

As already said, our approach does not require any irreducibility properties, and in particular, covers situation more general than those studied usually where the existence (and uniqueness) of some normalized steady solution to \eqref{1} is assumed yielding to the convergence \eqref{eq:ergodic1}.  

Besides a new simplified proof of a weak compactness result given in \cite{LMR}, we extend the convergence in \eqref{eq:ergodic1} into two directions:
\begin{itemize}
\item First, we get rid of the irreducibility assumption and study the long-time asymptotics of the $C_{0}$-semigroup $\left(U_{\H}(t)\right)_{t\geq0}$ also in the case in which there is more than one steady solution to \eqref{1}.
\item Second, we make the convergence \eqref{eq:ergodic1} \emph{quantitative} by showing that the semigroup $\left(U_{\H}(t)\right)_{t\geq0}$ is \emph{eventually compact}. Besides its own interest, such a compactness result implies that the convergence in \eqref{eq:ergodic1} is \emph{exponentially fast}. Moreover, it implies that $0$ is a \emph{semi-simple} eigenvalue of $\T_{\H}$ (this is the main tool which allows us to avoid any irreducibilty assumption for the long-time asymptotics). \end{itemize}
More precisely, our main result can be stated as follows
\begin{theo}\label{theo:mainintro} Let Assumptions \ref{hypO} and \ref{hypH} be in force. Assume that $\pO$ is of class $\mathcal{C}^{1,\alpha}$ for some $\alpha > \frac{1}{2}$ and $\H$ satisfies   \ref{hypr0}. Then, the $C_{0}$-semigroup $(U_{\H}(t))_{t\geq0}$ governing equation \eqref{1} is \emph{eventually compact} in $X$, i.e. there exists some $\tau_{\star} >0$ such that 
$$U_{\H}(t) \text{ is a compact operator in $X$ for any $t > \tau_{\star}.$}$$
Moreover, there exists $\lambda_{\star} >0$ such that
$$\mathfrak{S}(\T_{\H}) \cap \{\l\in \C\;;\;\mathrm{Re}\l > -\lambda_{\star}\} = \{0\}$$
where $0$ is an eigenvalue of $\T_{\H}$ which is a first order pole of the resolvent $\Rs(\cdot,\T_{\H})$. In particular, for any $\lambda_{0} \in (0,\lambda_{\star})$ there is $C >0$ such that
$$\left\|U_{\H}(t)f- \mathbf{P}_{0}f\right\|_{X} \leq C\exp\left(-\lambda_{0}t\right)\|f\|_{X}$$
for any $t\geq 0,$ and any $f \in X$ where $\mathbf{P}_{0}$ is the spectral projection associated to the zero eigenvalue.\end{theo}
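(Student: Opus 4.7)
The plan is to represent $(U_\H(t))_{t\geq 0}$ as a Dyson--Phillips-type expansion around the free, fully absorbing transport semigroup $(U_0(t))_{t\geq 0}$, where the $n$-th term isolates trajectories that reflect exactly $n$ times on $\partial\Omega$ before time $t$. The crucial a priori observation is that, since $|v|\geq r_0>0$, one has $U_0(t)=0$ for every $t>\tau_\Omega:=\mathrm{diam}(\Omega)/r_0$. Hence for $t>\tau_\Omega$ the no-reflection term disappears and every surviving term involves at least one application of $\H$; for $t$ larger still, every surviving term involves at least $N$ applications of $\H$, with $N$ growing with $t$. Combined with the boundary-to-bulk explicit representation of $(U_\H(t))_{t\geq0}$ introduced in \cite{luisa} and already exploited in \cite{LMKJMPA}, this reduces eventual compactness of $U_\H(t)$ to compactness of sufficiently high iterates of operators of the form $\ml \H$ acting on boundary trace spaces.

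The heart of the proof would then consist in showing that $(\ml\H)^n$ becomes compact on $L^1(\Gamma_+,\d\mu_+)$ for some $n$ large enough, and in transferring this boundary compactness to the interior via the streaming operators that appear in the representation. The diffuse structure of $\H$, together with the isotropy and essential boundedness of the kernel $\bm{k}(x,|v|,|v'|)$, supplies smoothing in the velocity variable; the $\mathcal{C}^{1,\alpha}$ regularity of $\partial\Omega$ with $\alpha>1/2$, the orthogonal invariance and absolute continuity of $\bm m$, and the uniform bound $|v|\geq r_0$ (which forbids pathological grazing trajectories and ensures bounded travel times) provide the spatial regularization along characteristics. A Dunford--Pettis weak-compactness criterion should first yield weak compactness of a single iterate (streamlining the corresponding step of \cite{LMR}), and a further iteration will upgrade this to norm-compactness, as is standard in this kind of kinetic analysis. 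Assembling these facts in the Dyson--Phillips expansion, only finitely many terms remain potentially non-compact for $t>\tau_\star$ for some $\tau_\star>0$, and these are controlled by the vanishing of $U_0(t)$ and of its low-order reflected iterates, yielding eventual compactness of $U_\H(t)$.

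Once eventual compactness is established, the spectral consequences follow from classical theory for eventually compact $C_0$-semigroups. The spectrum $\mathfrak{S}(\T_\H)$ in any half-plane $\{\mathrm{Re}\,\lambda>-c\}$ consists of finitely many eigenvalues of finite algebraic multiplicity, and the spectral mapping theorem holds. Stochasticity of $\H$, as expressed by \eqref{eq:normalise}, implies that $(U_\H(t))_{t\geq 0}$ is a contraction on $L^{1}(\Omega\times V,\d x\otimes \bm m(\d v))$, so $\mathfrak{S}(\T_\H)\subset\{\mathrm{Re}\,\lambda\leq 0\}$; dualising against the constant function $\mathbf{1}$ shows that $0\in\mathfrak{S}(\T_\H)$. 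A generalized eigenvector at $0$ would produce polynomial growth under $U_\H(t)$, contradicting contractivity, whence $0$ is a first-order pole. Isolating $0$ from the remaining finite spectrum yields $\lambda_\star>0$, and the exponential decay $\|U_\H(t)f-\mathbf{P}_0 f\|\leq Ce^{-\lambda_0 t}\|f\|$ for any $\lambda_0\in(0,\lambda_\star)$ follows from the spectral mapping theorem applied to the restriction of $U_\H(t)$ to $\mathrm{Ker}(\mathbf{P}_0)$.

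The main obstacle will be the $L^1$-compactness step for the iterated boundary/streaming operators: $L^1$ offers no automatic regularization mechanism, and the argument must carefully combine the velocity smoothing of $\H$ with the positional dispersion of characteristics emanating from $\partial\Omega$. The velocity lower bound $|v|\geq r_0$ simplifies this step substantially compared to \cite{LMR,LMKJMPA}, by excluding the slow- and grazing-velocity contributions that were responsible for the merely algebraic rate obtained there; this is precisely what allows the upgrade from strong stability to eventual compactness, and in turn to an exponential spectral gap.
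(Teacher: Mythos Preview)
Your overall strategy---Dyson--Phillips expansion, vanishing of the low-order terms $U_n(t)$ for large $t$, compactness of high iterates $(\mathsf{M}_\lambda\H)^n$ on the trace space, and then the standard spectral consequences---matches the paper's architecture, and your treatment of the spectral part (simple pole at $0$ via contractivity, exponential decay on $\mathrm{Ker}(\mathbf{P}_0)$) is essentially the paper's Corollary~\ref{cor:convUH}.

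There is, however, a genuine gap in the compactness step. For $t>\tau_N$ the first $N$ terms vanish, but the remainder $\sum_{n\geq N}U_n(t)$ is an \emph{infinite} series whose convergence, a priori, is only strong (this is all Theorem~\ref{theo:UKT}(3) gives). Knowing that each $U_n(t)$, or each $\mathsf{\Xi}_\lambda\H(\mathsf{M}_\lambda\H)^n\mathsf{G}_\lambda$, is compact is not enough: a strongly convergent series of compact operators need not have a compact sum in $L^1$. Your phrase ``only finitely many terms remain potentially non-compact'' misstates the situation---infinitely many (compact) terms remain, and you have not explained why their sum is compact.

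The paper closes this gap with an ingredient absent from your outline: a quantitative decay estimate along vertical lines,
\[
\left\|(\mathsf{M}_\lambda\H)^2\right\|_{\mathscr{B}(L^1_+)} \leq \frac{C}{|\lambda|}, \qquad \mathrm{Re}\,\lambda>0,
\]
proved via the change of variables of Proposition~\ref{lem:ChVa} and an integration by parts in the speed variable (this is where Assumption~\ref{hypr0} on the derivatives of $\bm{k}$ and $\varpi$, and the hypothesis $\alpha>\tfrac12$, actually enter). Feeding this into the complex Laplace inversion formula for $U_\H(t)$ produces an integral over $\{\varepsilon+i\eta:\eta\in\R\}$ of compact operators whose $\mathscr{B}(X)$-norms are integrable in $\eta$ once $N\geq 4$; the limit is therefore in \emph{operator norm}, and compactness of $U_\H(t)$ follows. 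Without such a decay-and-inversion mechanism (or some alternative argument forcing operator-norm convergence of the tail), your proof does not close.
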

\begin{nb} Whenever the semigroup $\left(U_{\H}(t)\right)_{t\geq0}$ is irreducible, one has
\begin{equation}
\label{eq:proj-ergo}
\mathbf{P}_{0}f=\varrho_{f}\,{\Psi}_{\mathsf{H}}, \qquad \text{ with } \quad\varrho_{f}=\displaystyle\int_{\Omega\times V}f(x,v)\d x \otimes \bm{m}(\d v),\end{equation}
for any $f \in X$ where $\Psi_{\H}$ is the unique positive invariant density of $\T_{\H}$ with unit mass. In this case, like in \eqref{eq:ergodic1}, $\mathbf{P}_{0}$ is the so-called ergodic projection of $\T_{\H}.$ \end{nb}

The proof of the above result is based upon suitable compactness properties of some boundary operators which have been studied already in our contributions \cite{LMR,LMKJMPA} and made precise in the situation considered here. We recall that these operators, already studied in \cite{LMR}, are the fundamental bricks on which the resolvent of $\T_{\H}$ is constructed, in particular, for $\l >0$, it is known that the resolvent $\Rs(\l,\T_{\H})$ is given by
$$\Rs(\l,\T_{\H})=\Rs(\l,\T_{0})+\sum_{n=0}^{\infty}\mathsf{\Xi}_{\l}\H\left(\mathsf{M}_{\l}\H\right)^{n}\mathsf{G}_{\l}$$
where the operators $\mathsf{\Xi}_{\l},\mathsf{M}_{\l},\mathsf{G}_{\l}$ are precisely defined in Section \ref{sec:prel} while $\Rs(\l,\T_{0})$ is the resolvent of the transport operator associated to absorbing boundary conditions (corresponding to $\H=0$). \medskip

Under the assumption $0 \notin V$ (and in contrast with what happens in the general case $0 \in V$), the spectrum of $\T_{0}$ is empty and the various operators are defined and bounded for \emph{any} $\l \in \C$ and depend on $\l$ in an analytic way. Moreover, 
$$\left(\mathsf{M}_{\l}\H\right)^{2} \text{ is a weakly compact operator in } L^{1}(\Gamma_{+}\,,\,\d\mu_{+})$$
We give here a new simplified proof of this weak-compactness property which was obtained in \cite[Theorem 5.1]{LMR} by highly technical means. The simplified proof presented here is based on an important change of variables for boundary operators introduced in \cite{LMKJMPA}.
Such compactness induces naturally a complete picture of the asymptotic spectrum of the generator $\T_{\H}$: the spectrum $\mathfrak{S}(\T_{\H})$ of $\T_{\H}$ in $L^{1}(\Omega\times V\,\d x\otimes \bm{m}(\d v))$ consists of isolated eigenvalues with finite  algebraic multiplicities and there is $\l_{\star} >0$ such that
$$\mathfrak{S}(\T_{\H}) \cap \{\l \in \C\;;\;\mathrm{Re}\l > -\l_{\star}\}=\{0\}.$$
Moreover, using a suitable change of variable introduced in \cite{LMKJMPA}, one can also prove an explicit decay of $\left(\mathsf{M}_{\l}\H\right)^{2}$ of the form
\begin{equation}\label{eq:decay}\left\|\left(\mathsf{M}_{\l}\H\right)^{2}\right\|_{\mathscr{B}(L^{1}(\Gamma_{+},\d\mu_{+}))} \leq \frac{C}{|\l|} \qquad \forall \l \in \C\;\;\mathrm{Re}\l >0.\end{equation}
This allows to transfer the weak compactness of the $\left(\mathsf{M}_{\l}\H\right)^{2}$ into some compactness of the semigroup $U_{\H}(t)$ for $t$ large enough. Indeed, thanks to a representation of the semigroup $\left(U_{\H}(t)\right)_{t\geq0}$ as a series of operators, reminiscent of Dyson-Phillips expansion series and derived in \cite{luisa}, 
$$U_{\H}(t)f=\sum_{n=0}^{\infty}U_{n}(t)f, \qquad \quad t >0, \quad f \in L^{1}(\Omega \times V,\d x\otimes \bm{m}(\d v))$$
our assumption $0 \notin V$ implies that, for any $N >0$, there is $\tau_{N} >0$ such that 
$$U_{n}(t)=0 \qquad \forall t > \tau_{N}, \quad n < N,$$
i.e. the first terms of the representation series vanish for $t$ large enough. We wish to emphasize here that such a representation series is a very natural representation of the solution to \eqref{1} which consists in following the trajectories of particles inside the domain $\Omega$ and for which change of velocities occur \emph{only} due to the interaction with the boundary $\pO$. Roughly speaking, for each $n\in \N$, the term $U_{n}(t)$ takes into account the $n$-th rebound on the particles on $\pO$.  

From the above considerations, we can deduce by complex Laplace inversion formula \cite{arendt} that
$$U_{\H}(t)f=\frac{1}{2\pi}\int_{-\infty}^{\infty}\sum_{n=N}^{\infty}\mathsf{\Xi}_{\e+i\eta}\H\left(\mathsf{M}_{\e+i\eta}\H\right)^{n}\mathsf{G}_{\e+i\eta}f\d\eta, \qquad \e >0$$
where, thanks to the estimate \eqref{eq:decay}, the convergence actually holds in \emph{operator norm} yielding the compactness of $U_{\H}(t)$ for $t >\tau_{N}$ if $N$ is large enough. 

We believe that the approach adopted here is  robust enough to be applied also to more general problems (including collisional models with general boundary conditions) as well as the study of \eqref{1} in more general $L^{p}(\Omega \times V,\d x \otimes \bm{m}(\d v))$, $1 \leq p < \infty.$ Moreover, even though our analysis is restricted, for technical reasons, to the case of a diffuse boundary operator satisfying Assumptions \ref{hypH}, we are convinced that our method could also be adapted to deal with more general partly diffusive boundary operators (of Maxwell-type) as those considered in \cite{LMR,bernou1} (see Appendix \ref{sec:partial} for partial results in that direction).

\subsection{Notations} In all the sequel,  for any Banach space $Y$, if $A\::\:\D(A) \subset Y \to Y$ is a given closed and densely defined linear operator, the spectrum of $A$ is denoted by $\mathfrak{S}(A)$ whereas its point spectrum, i.e. the set of eigenvalues of $A$, is denoted by $\mathfrak{S}_{p}(A)$. The spectral bound $s(A)$ of $A$  is defined as 
$$s(A)=\sup\{\mathrm{Re}\l\,,\,\l \in \mathfrak{S}(A)\}.$$
For any bounded operator $B \in \mathscr{B}(Y)$, $r_{\sigma}(B)$ denotes the spectral radius of $B$ defined as
$$r_{\sigma}(B)=\sup\{|\lambda|\;;\;\l \in \mathfrak{S}(B)\}$$
and we recall Gelfand's formula which provides an alternative formulation as 
$$r_{\sigma}(B)=\lim_{n\to\infty}\|B^{n}\|_{\mathscr{B}(Y)}^{\frac{1}{n}}\,.$$
For any $E \subset \R^{d}$, we denote with $\ind_{E}$ the indicator function of $E$ defined as $\ind_{E}(x)=1$ if $x \in E$ and $\ind_{E}(x)=0$ if $x \notin E$.

\subsection{Organization of the paper} After this Introduction, Section \ref{sec:prel}  presents several technical known results and the functional setting introduced in \cite{LMR}. In Section \ref{sec:Hop}, we recall the fundamental change of variable obtained in \cite{LMKJMPA} as well as the weak compactness of $\left(\mathsf{M}_{\l}\H\right)^{2}$ together with the full proof of Estimate \eqref{eq:decay}. In Section \ref{sec:spec} we apply this estimate to derive the eventual compactness of the semigroup $\left(U_{\H}(t)\right)_{t\geq0}$ (Theorem \ref{theo:compUH}) yielding to our main result Theorem \ref{theo:mainintro}. Section \ref{sec:exam} exhibits several examples of applications of our results as well as some open problems and conjectures about related questions. The paper ends with two  Appendices. Appendix \ref{sec:partial} gives a description of the asymptotic spectrum of $\T_{\H}$ in the more general case of \emph{partly} diffuse boundary operators and discusses in an informal way the \emph{quasi-compactness} of $\left(U_{\H}(t)\right)_{t\geq0}$.  Appendix \ref{appCom} gives a short proof of the weak compactness of $\H\mathsf{M}_{\l}\H$.

 \subsection*{Acknowledgments}    B. Lods gratefully acknowledges the financial support from the Italian Ministry of Education, University and Research (MIUR), ``Dipartimenti di Eccellenza'' grant 2018-2022 as well as the support  from the \textit{de Castro Statistics Initiative}, Collegio Carlo Alberto (Torino). Part of this research was performed while the second author was visiting the ``Laboratoire de Math\'ematiques CNRS UMR 6623'' at Universit\'e de  Franche-Comt\'e in February 2020. He wishes to express his gratitude for the financial support and warm hospitality offered by this Institution. We are grateful to both the anonymous referees for their careful readings and observations which contribute to improve the overall presentation of the paper.

\section{Preliminary results}\label{sec:prel}

We collect here several preliminary and known results scattered in the literature. Notice that, in this Section, we will make no use of our fundamental assumption 
$0 \notin V$. In particular, the results quoted in this Section remain  valid in the case in which $0 \in V$. We will see in the subsequent Sections that several of the results presented here can be drastically improved under \eqref{eq:vro}.

 \subsection{Functional setting} We introduce in this subsection the various mathematical tools and functional spaces used in the rest of the paper. 
Let us begin with introducing the \textit{travel time} of particles in $\Omega$,
defined as:
\begin{defi}\label{tempsdevol}
For any $(x,v) \in \overline{\Omega} \times V,$ define
\begin{equation*}
t_{\pm}(x,v)=\inf\{\,s > 0\,;\,x\pm sv \notin \Omega\}.
\end{equation*}
To avoid confusion, we will set $\tau_{\pm}(x,v):=t_{\pm}(x,v)$  if $(x,v) \in
\partial \Omega \times V.$
\end{defi}
 Under the assumption \eqref{eq:vro}, the travel time is actually bounded, since 
\begin{equation}\label{eq:tbounded}
t_{\pm}(x,v) \leq \frac{D}{|v|} \leq \frac{D}{r_{0}}, \qquad \forall v \in V\end{equation}
where $D$ denotes the diameter of $\Omega$, $D=\sup\{|x-y|\,,\,x,y \in \bar{\Omega}\}.$

In order to exploit this local nature of the boundary conditions, we introduce the following notations. For any $x \in \partial \Omega$, we define
$$\Gamma_{\pm}(x)=\{v \in V\;;\; \pm v \cdot n(x) > 0\}, \qquad \Gamma_{0}(x)=\{v \in V\;;\; v \cdot n(x)=0\}$$
and we define the measure $\bm{\mu}_{x}(\d v)$ on $\Gamma_{\pm}(x)$ given by
$$\bm{\mu}_{x}(\d v)=|v\cdot n(x)|\bm{m}(\d v).$$
We introduce the partial Sobolev space 
$W_1=\{\psi \in X\,;\,v
\cdot \nabla_x \psi \in X\}.$ It is known \cite{ces1,ces2} that any $\psi \in W_1$ admits traces $\psi_{|\Gamma_{\pm}}$ on
$\Gamma_{\pm}$ such that 
$$\psi_{|\Gamma_{\pm}} \in
L^1_{\mathrm{loc}}(\Gamma_{\pm}\,;\,\d \mu_{\pm}(x,v)) \qquad \text{ where } \qquad \d \mu_{\pm}(x,v)=|v \cdot n(x)|\pi(\d x) \otimes \bm{m}(\d v),$$
denotes the "natural" measure on $\Gamma_{\pm}$. Here, $\pi(\d x)$ denotes the surface Lebesgue measure on $\partial\Omega.$  Notice that, since $\d\mu_{+}$ and $\d\mu_{-}$ share the same expression, we will often simply denote it by 
$$\d \mu(x,v)=|v \cdot n(x)|\pi(\d x) \otimes \bm{m}(\d v),$$
the fact that it acts on $\Gamma_{-}$ or $\Gamma_{+}$ being clear from the context.  Note that
$$\partial
\Omega \times V:=\Gamma_- \cup \Gamma_+ \cup \Gamma_0,$$ where
$$\Gamma_0:=\{(x,v) \in \partial \Omega \times V\,;\,v \cdot
n(x)=0\}.$$ 
We introduce the set
$$W=\left\{\psi \in W_1\,;\,\psi_{|\Gamma_{\pm}} \in L^1_{{\pm}}\right\}$$
where we recall that
$$L^{1}_{\pm}=L^{1}(\Gamma_{\pm},\d\mu_{\pm}).$$
One can show \cite{ces1,ces2} that 
$$W=\left\{\psi \in
W_1\,;\,\psi_{|\Gamma_+} \in \lp\right\} =\left\{\psi \in
W_1\,;\,\psi_{|\Gamma_-} \in \lm\right\}.$$ Then, the \textit{trace
operators} $\mathsf{B}^{\pm}$:
\begin{equation*}\begin{cases}
\mathsf{B}^{\pm}: \:&W_1 \subset X \to L^1_{\mathrm{loc}}(\Gamma_{\pm}\,;\,\d \mu_{\pm})\\
&\psi \longmapsto \mathsf{B}^{\pm}\psi=\psi_{|\Gamma_{\pm}},
\end{cases}\end{equation*}
are such that $\mathsf{B}^{\pm}(W)\subseteq L^1_{\pm}$. Let us define the
{\it maximal transport operator } $\mathsf{T}_{\mathrm{max}}$  as follows:
\begin{equation*}\begin{cases} \mathsf{T}_{\mathrm{max}} :\:& \D(\mathsf{T}_{\mathrm{max}}) \subset X \to X\\
&\psi \mapsto \mathsf{T}_{\mathrm{max}}\psi(x,v)=-v \cdot \nabla_x
\psi(x,v),
\end{cases}\end{equation*}
with domain $\D(\mathsf{T}_{\mathrm{max}})=W_1.$
 Now, for any \textit{
bounded boundary operator} $\mathsf{H} \in\mathscr{B}(L^1_+,L^1_-)$, define
$\mathsf{T}_{\mathsf{H}}$ as
$$\mathsf{T}_{\mathsf{H}}\varphi=\mathsf{T}_{\mathrm{max}}\varphi \qquad \text{ for any }
\varphi \in \D(\mathsf{T}_{\mathsf{H}}):=\{\psi \in
W\,;\,\psi_{|\Gamma_-}=\mathsf{H}(\psi_{|\Gamma_+})\}.$$ In particular, the
transport operator with absorbing conditions (i.e. corresponding
to $\mathsf{H}=0$) will be denoted by $T_0$.

\subsection{About the resolvent of $\mathsf{T}_{\mathsf{H}}$}\label{sec:Ml} We can now describe the resolvent of the operator $\mathsf{T}_{\H}$ introducing first a series of useful operators.  For any $\lambda \in \mathbb{C}$ such that $\mathrm{Re}\lambda
> 0$, define
\begin{equation*}
\begin{cases}
\mathsf{M}_{\lambda} \::\:&L^1_- \longrightarrow L^1_+\\
&u \longmapsto
\mathsf{M}_{\lambda}u(x,v)=u(x-\tau_{-}(x,v)v,v)e^{-\lambda\tau_{-}(x,v)},\:\:\:(x,v) \in \Gamma_+\;;
\end{cases}
\end{equation*}

\begin{equation*}
\begin{cases}
\mathsf{\Xi}_{\lambda} \::\:&L^1_- \longrightarrow X\\
&u \longmapsto \mathsf{\Xi}_{\lambda}u(x,v)=u(x-t_{-}(x,v)v,v)e^{-\lambda
t_{-}(x,v)}\ind_{\{t_{-}(x,v) < \infty\}},\:\:\:(x,v) \in \Omega \times V\;;
\end{cases}
\end{equation*}

\begin{equation*}
\begin{cases}
\mathsf{G}_{\lambda} \::\:&X \longrightarrow L^1_+\\
&\varphi \longmapsto \mathsf{G}_{\lambda}\varphi(x,v)=\displaystyle
\int_0^{\tau_{-}(x,v)}\varphi(x-sv,v)e^{-\lambda s}\d s,\:\:\:(x,v) \in
\Gamma_+\;;\end{cases}
\end{equation*}
and
\begin{equation*}
\begin{cases}
\mathsf{R}_{\lambda} \::\:&X \longrightarrow X\\
&\varphi \longmapsto \mathsf{R}_{\lambda}\varphi(x,v)=\displaystyle
\int_0^{t_{-}(x,v)}\varphi(x-tv,v)e^{-\lambda t}\d t,\:\:\:(x,v) \in
\Omega\times V\;.
\end{cases}
\end{equation*}
The interest of these operators is related to the resolution of the boundary
value problem:
\begin{equation}\label{BVP1}
\begin{cases}
(\lambda- \mathsf{T}_{\mathrm{max}})\varphi=g,\\
\mathsf{B}^-\varphi=u,
\end{cases}
\end{equation}
where $\lambda > 0$, $g \in X$ and $u$ is a given function over
$\Gamma_-.$ Such a boundary value problem, with $u \in \lm$ and $g \in X$ can be uniquely solved and its unique solution $\varphi \in \D(\T_{\mathrm{max}})$ is given by
\begin{equation}\label{eq:XiTmax}
\varphi=\mathsf{R}_{\lambda}g + \mathsf{\Xi}_{\lambda}u\end{equation} with $\B^{+}f \in \lp$ and
\begin{equation}\label{eq:abl}
\|\B^{+}\varphi\|_{\lp}+\l\,\|\varphi\|_{X} \leq \|u\|_{\lm} + \|g\|_{X}.\end{equation}
We refer to \cite[Theorem 2.1]{ABL} for more details on the boundary value problem \eqref{BVP1}. In particular, for any $\lambda
> 0$, 
\begin{equation}\label{eq:XiLRL}
\|\mathsf{\Xi}_{\lambda}\|_{\mathscr{B}(\lm,\,X)} \leq 
\lambda^{-1}\,\qquad 
\|\mathsf{R}_{\lambda}\|_{\mathscr{B}(X)} \leq \lambda^{-1}\,, \qquad \|\mathsf{G}_{\lambda}\|_{\mathscr{B}(X,\lp)} \leq 1
\end{equation}
where the first inequality is established in  \cite[Remark 3.2]{ABL} while the second and third ones are deduced from \eqref{eq:abl} for $u=0$ so that $\varphi=\mathsf{R}_{\l}g$ and $\B^{+}\varphi=\mathsf{G}_{\l}g$.
Moreover, one has
\begin{equation}\label{eq:ML1}\|\mathsf{M}_{\lambda}\|_{\mathscr{B}(\lm,\lp)} \leq 1 \qquad \forall \l \in \C_{+}\end{equation}
which can be easily deduced from the identity 
\begin{equation}\label{10.51}
\int_{\Gamma_-}\psi(z,v)\d\mu_{-}(z,v)=\int_{\Gamma_+}\psi(x-\tau_{-}(x,v)v,v)\d\mu_{+}(x,v), \qquad \forall \psi \in L^{1}_{-}\end{equation}
established in \cite[Proposition 2.11]{ABL0}. 

Actually, for $\lambda=0$, we can extend the definition of these operators in an obvious way and, in contrast with what happens in the general case in which $0 \in V$ (see \cite[Section 2.4]{LMKJMPA}), the fact that velocities are bounded away from zero implies here that all the resulting operators remain bounded for $\lambda=0$. Indeed, when $0 \in V$, 
the operators $\mathsf{\Xi}_{0}$ and $\mathsf{R}_{0}$ are not necessarily bounded (the estimates \eqref{eq:XiLRL} clearly deteriorate when $\lambda \to 0$), see \cite[Section 2.4]{LMKJMPA} for a thorough description of these operators. We will see in Section \ref{sec:spec} that the situation is much more favourable whenever $0 \notin V$.
 
We can complement the above result with the following
\begin{propo}\phantomsection\label{propo:resolvante} Let Assumptions \ref{hypO} and \ref{hypH} be in force. 
Introduce the half-plane
$$\C_{+}=\{z \in \C\;;\;\mathrm{Re}z >0\}.$$
Then, for any $\l \in\C_{+}$ one has $r_{\sigma}(\mathsf{M}_{\l}\H) <1$ and
\begin{equation}\label{eq:reso}
\Rs(\lambda,\mathsf{T}_{\mathsf{H}})=\mathsf{R}_{\lambda}+\mathsf{\Xi}_{\lambda}\mathsf{H}\Rs(1,\mathsf{M}_{\lambda}\mathsf{H})\mathsf{G}_{\lambda}
=\Rs(\lambda,\T_{0})+\sum_{n=0}^{\infty}\mathsf{\Xi}_{\lambda}\mathsf{H}\left(\mathsf{M}_{\lambda}\mathsf{H}\right)^{n}\mathsf{G}_{\lambda}
\end{equation}
where the series  converges in $\mathscr{B}(X)$.
\end{propo}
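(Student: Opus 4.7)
The plan is to reduce the resolvent equation $(\l-\T_\H)f=g$ to a fixed-point equation for the outgoing trace $w:=\B^+f\in L^1_+$, exploiting the BVP representation recalled in this section. Given $g\in X$ and treating $w$ as the unknown, the diffuse boundary condition forces $\B^-f=\H w$, so the solution of the free BVP reads
$$f=\mathsf{R}_\l g+\mathsf{\Xi}_\l\H w.$$
Computing the outgoing trace of this expression via the elementary identities $\B^+\mathsf{R}_\l g=\mathsf{G}_\l g$ and $\B^+\mathsf{\Xi}_\l\H w=\mathsf{M}_\l\H w$ (both immediate from integration along straight characteristics), the consistency $w=\B^+f$ becomes the master boundary equation
$$(I-\mathsf{M}_\l\H)\,w=\mathsf{G}_\l g.$$

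The heart of the proof is to show that $r_\sigma(\mathsf{M}_\l\H)<1$ on $L^1_+$ for every $\l\in\C_+$. First, the pointwise domination $|(\mathsf{M}_\l\H)^nu|\leq(\mathsf{M}_{\mathrm{Re}\l}\H)^n|u|$ reduces the matter to real $\l>0$, in which case $\mathsf{M}_\l\H$ is a positive contraction. The normalization \eqref{eq:normalise} is precisely $\H^{\ast}\mathbf{1}=\mathbf{1}$, while a direct change of variable along characteristics yields $(\mathsf{M}_\l)^{\ast}\phi(y,v)=e^{-\l\tau_+(y,v)}\phi(y+\tau_+(y,v)v,v)$, which is strictly sub-Markov. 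Iterating, one identifies $\bigl((\mathsf{M}_\l\H)^{\ast}\bigr)^n\mathbf{1}(x,v)$ with $\mathbb{E}_{(x,v)}\bigl[e^{-\l S_n}\bigr]$, where $S_n$ denotes the total travel time accumulated during $n$ iterated diffuse rebounds of the boundary Markov chain on $\Gamma_+$ induced by the kernel $\bm{k}$. A renewal-type argument (the inter-rebound times $\tau_+$ are bounded away from zero on a set of positive $\d\mu_+$-measure) shows $S_n\to+\infty$ with a positive linear rate in probability, so $\mathbb{E}_{(x,v)}\bigl[e^{-\l S_n}\bigr]$ decays geometrically uniformly in $(x,v)$. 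By duality this is equivalent to $\|(\mathsf{M}_\l\H)^n\|_{\mathscr{B}(L^1_+)}\to 0$, hence $r_\sigma(\mathsf{M}_\l\H)<1$.

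Once this estimate is in hand, the Neumann series $\Rs(1,\mathsf{M}_\l\H)=\sum_{n\geq 0}(\mathsf{M}_\l\H)^n$ converges in operator norm on $L^1_+$, so $w=\Rs(1,\mathsf{M}_\l\H)\mathsf{G}_\l g$, and inserting this into the BVP formula yields
$$f=\mathsf{R}_\l g+\mathsf{\Xi}_\l\H\,\Rs(1,\mathsf{M}_\l\H)\,\mathsf{G}_\l g=\Rs(\l,\T_0)g+\sum_{n\geq 0}\mathsf{\Xi}_\l\H(\mathsf{M}_\l\H)^n\mathsf{G}_\l g,$$
the series converging in $\mathscr{B}(X)$ by combining the uniform bounds on $\mathsf{\Xi}_\l,\H,\mathsf{G}_\l$ with the geometric decay of $(\mathsf{M}_\l\H)^n$. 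The reverse check that this $f$ lies in $\D(\T_\H)$ and satisfies $(\l-\T_\H)f=g$ is immediate from the construction, and the injectivity of $\l-\T_\H$ for $\mathrm{Re}\l>0$ (since $\T_\H$ generates a contraction $C_0$-semigroup on $X$) identifies $f$ with $\Rs(\l,\T_\H)g$.

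The main obstacle is the spectral-radius estimate above: the naive norm bound $\|\mathsf{M}_\l\H\|_{\mathscr{B}(L^1_+)}\leq 1$ is typically saturated, because grazing velocities nearly tangent to $\pO$ give $\tau_\pm\approx 0$ and thus $e^{-\l\tau_-}\approx 1$, so no single-step contraction factor is available. The strict inequality $r_\sigma(\mathsf{M}_\l\H)<1$ must therefore come from the cumulative mixing effect of many boundary reflections, which is why a renewal/Markov-chain argument on the iterated rebound dynamics is essential rather than a direct operator-norm computation.
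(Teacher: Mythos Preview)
The paper itself gives no proof of this proposition; it merely records that ``the proof can be extracted from \cite[Proposition 2.6]{LMR}''. Your reduction of the resolvent equation to the boundary fixed-point problem $(I-\mathsf{M}_\lambda\H)w=\mathsf{G}_\lambda g$ via the free BVP, together with the trace identities $\B^+\mathsf{R}_\lambda=\mathsf{G}_\lambda$ and $\B^+\mathsf{\Xi}_\lambda=\mathsf{M}_\lambda$, is exactly the standard route and is correct. Once $r_\sigma(\mathsf{M}_\lambda\H)<1$ is known, the Neumann series and the final identification with $\Rs(\lambda,\T_\H)$ are fine.

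The gap is in your argument for $r_\sigma(\mathsf{M}_\lambda\H)<1$. The probabilistic identification $((\mathsf{M}_\lambda\H)^\ast)^n\mathbf{1}(x,v)=\mathbb{E}_{(x,v)}[e^{-\lambda S_n}]$ is correct, but the one-line ``renewal-type argument'' you invoke does not go through under Assumption~\ref{hypH} alone. The fact that $\tau_+$ is bounded away from zero on \emph{some} set of positive $\d\mu_+$-measure does not imply uniform linear growth of $S_n$: the boundary Markov chain need not visit that set with uniformly positive probability. Concretely, under the isotropic kernel the outgoing direction $\sigma$ and speed $\varrho$ are independent given $(x,|v'|)$, with $\sigma$ distributed according to the fixed cosine law but $\varrho$ distributed according to $\bm{k}(x,\varrho,|v'|)\varrho\,\bm{m}_0(\d\varrho)$, which depends on the \emph{incoming} speed $|v'|$. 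Nothing in \eqref{eq:normalise} prevents this speed chain from drifting to infinity, in which case $\tau_k=\ell(x_k,\sigma_k)/\varrho_k$ could be summable and $\mathbb{E}[e^{-\lambda S_n}]$ need not decay at all, let alone geometrically and uniformly. Your closing paragraph correctly flags that a single-step contraction is unavailable, but the cumulative mechanism you propose is not the one that actually closes the argument in this generality.

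A cleaner route---and the one implicit in the cited literature---is to \emph{first} use that $\T_\H$ generates a contraction $C_0$-semigroup (a fact you already invoke at the end, available independently via Green's formula and the construction in \cite{luisa}), so that $\C_+\subset\varrho(\T_\H)$; then the BVP equivalence you set up yields $1\in\varrho(\mathsf{M}_\lambda\H)$, and for real $\lambda>0$ positivity of $\mathsf{M}_\lambda\H$ on the AL-space $L^1_+$ forces $r_\sigma(\mathsf{M}_\lambda\H)\in\sigma(\mathsf{M}_\lambda\H)$, hence $r_\sigma<1$. The complex case follows from the domination you already noted.
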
  
\begin{proof} The fact that $r_{\sigma}(\mathsf{M}_{\l}\mathsf{H}) < 1$ for $\l \in \C_{+}$ is given in \cite[Proof of Theorem 6.7]{LMR}. We notice here that Assumptions 6.1 and 4.4 of \cite{LMR} are satisfied under our Assumptions \ref{hypO} and \ref{hypH}. For this one deduces that $I-\mathsf{M}_{\l}\H \in \mathscr{B}(\lp)$ is invertible and the expression of the resolvent \eqref{eq:reso} is then easy to deduce (see e.g. \cite[Theorem 4.2]{ABL}).\end{proof}
\begin{nb} As already mentioned, the previous result holds true in a more general situation, in particular, it still holds whenever $0 \in V$.\end{nb}
\section{General properties of the boundary operator $\H$}\label{sec:Hop}

\subsection{Useful change of variables from \cite{LMKJMPA}}\label{sec:ChVa}

We begin this section with a very useful change of variables, derived in our previous contribution \cite[Section 6]{LMKJMPA} (in particular, it still holds true if $0 \in V$), which can be formulated as follows
 \begin{propo}\phantomsection\label{lem:ChVa} Assume that $\partial\Omega$ satisfies Assumptions \ref{hypO}. For any $x \in \partial\Omega$, we set
 $$\S_{+}(x)=\left\{\sigma \in \S^{d-1}\;;\;\sigma \cdot n(x) >0\right\}=\Gamma_{+}(x) \cap \S^{d-1}.$$
Then, for any nonnegative measurable mapping $g\::\:\S^{d-1} \mapsto \R$, one has,
$$\int_{\S_{+}(x)}g(\sigma)\,|\sigma\cdot n(x)|\d\sigma=\int_{\partial\Omega}g\left(\frac{x-y}{|x-y|}\right)\mathcal{J}(x,y)\pi(\d y),$$
and 
\begin{equation}\label{eq:Jxy}
\mathcal{J}(x,y)=\ind_{\Sigma_{+}(x)}(y)\frac{|(x-y)\cdot n(x)|}{|x-y|^{d+1}}\,|(x-y)\cdot n(y)|, \qquad \forall y \in \Sigma_{+}(x)\end{equation}
with
$$\Sigma_{+}(x)=\left\{y \in \partial\Omega\::\:\left]x,y\right[ \subset \Omega\,;\,(x-y) \cdot n(x) > 0\;;\;n(x-y) \cdot n(y) < 0\right\}$$
where $\left]x,y\right[=\{tx+(1-t)y\,;\,0 < t < 1\}$ is the open segment joining $x$ and $y$. 
\end{propo}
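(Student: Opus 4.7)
The plan is to recognize the claim as a change of variables on $\S_{+}(x)$ induced by the geometric correspondence $\sigma \longleftrightarrow y$, where $y$ is the first intersection of the inward ray $\{x-s\sigma\;;\;s>0\}$ (inward at $x$ since $\sigma\cdot n(x)>0$) with $\pO$: by boundedness of $\Omega$ this ray exits $\pO$ at a unique first-hit point $y = x - \tau(x,\sigma)\sigma$. The three conditions defining $\Sigma_{+}(x)$ are precisely those satisfied by this first-hit point: $]x,y[ \subset \Omega$ by minimality of $\tau(x,\sigma)$, $(x-y)\cdot n(x)=|x-y|\,\sigma\cdot n(x)>0$, and $(x-y)\cdot n(y)<0$ because the ray exits $\Omega$ at $y$ so that $y\to x$ is inward at $y$. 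Conversely, each $y \in \Sigma_{+}(x)$ recovers $\sigma=(x-y)/|x-y| \in \S_{+}(x)$, so that (up to $d\sigma$-null sets of ``glancing'' directions, i.e.\ tangent to $\pO$ or producing multiple boundary hits) the assignment $\sigma \longleftrightarrow y$ is a bijection between $\S_{+}(x)$ and $\Sigma_{+}(x)$.

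The heart of the argument is the Jacobian of the map $\Phi\::\,y \in \Sigma_{+}(x) \mapsto \sigma=(x-y)/|x-y|\in\S^{d-1}$. Working in a local $\mathcal{C}^{1}$ chart $(u_{1},\ldots,u_{d-1})$ on $\pO$ with $\pi(\d y)=\sqrt{g(u)}\,\d u$, direct differentiation yields
$$\partial_{u_{i}}\Phi(y)=-\frac{1}{|x-y|}\bigl(\partial_{u_{i}}y - \langle \partial_{u_{i}}y,\sigma\rangle\,\sigma\bigr),$$
so $D\Phi(y)$ factors as rescaling by $|x-y|^{-1}$ composed with the orthogonal projection from $T_{y}\pO = n(y)^{\perp}$ onto $T_{\sigma}\S^{d-1}=\sigma^{\perp}$. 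The determinant of that projection between $(d-1)$-dimensional hyperplanes equals the absolute value of the cosine of the angle between their normals, namely $|\sigma\cdot n(y)|=|(x-y)\cdot n(y)|/|x-y|$. This produces the classical solid-angle identity
$$\d\sigma = \frac{|(x-y)\cdot n(y)|}{|x-y|^{d}}\,\pi(\d y) \qquad \text{for } y \in \Sigma_{+}(x).$$

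To conclude, one combines this with $|\sigma\cdot n(x)|=|(x-y)\cdot n(x)|/|x-y|$ to obtain
$$|\sigma\cdot n(x)|\,\d\sigma = \frac{|(x-y)\cdot n(x)|\,|(x-y)\cdot n(y)|}{|x-y|^{d+1}}\,\pi(\d y)=\mathcal{J}(x,y)\,\pi(\d y),$$
and integrates $g\circ\Phi$ against both sides, extending trivially by $\ind_{\Sigma_{+}(x)}$ to the whole of $\pO$, to obtain the announced identity.

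The main obstacle is the Jacobian computation: one has to identify $D\Phi(y)\::T_{y}\pO \to T_{\sigma}\S^{d-1}$ cleanly as the composition of a rescaling with a projection between two hyperplanes and to compute its determinant, which requires some care with the $\mathcal{C}^{1}$ structure of $\pO$. A subsidiary technical point is checking that glancing directions form a $d\sigma$-null set so that the bijection $\sigma\longleftrightarrow y$ is valid almost everywhere; this is a direct consequence of the $\mathcal{C}^{1}$ boundary assumption in Assumption \ref{hypO}.
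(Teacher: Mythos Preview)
Your proposal is correct. Note however that the paper does not actually prove this proposition: it is quoted verbatim from the authors' earlier work \cite[Section 6]{LMKJMPA}, so there is no in-paper proof to compare against. Your argument---identifying the ray-tracing bijection $\sigma\longleftrightarrow y$ between $\S_{+}(x)$ and $\Sigma_{+}(x)$, factoring $D\Phi(y)$ as a rescaling by $|x-y|^{-1}$ composed with the orthogonal projection $n(y)^{\perp}\to\sigma^{\perp}$, and reading off the determinant as $|\sigma\cdot n(y)|$---is the standard derivation of the solid-angle element on a $\mathcal{C}^{1}$ hypersurface and is precisely the route one expects in the cited reference. The two technical points you flag (the Jacobian computation and the $\d\sigma$-null set of glancing directions) are the only places requiring care, and both are handled correctly.
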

It is easy to deduce from the above expression of $\mathcal{J}(x,y)$, that $\mathcal{J}(x,y) \leq |x-y|^{1-d}$ for any $(x,y) \in \pO\times\pO$, $x\neq y$. Whenever the boundary $\partial\Omega$ is more regular than the mere class $\mathcal{C}^{1}$  one can strengthen this estimate to get the following 
\begin{lemme}{\cite[Lemma 6.5]{LMKJMPA}}\label{lem:1}
Assume that $\partial\Omega$ is of class $\mathcal{C}^{1,\alpha}$, $\alpha \in (0,1)$ then, there exists a positive constant $C_{\Omega} >0$ such that
$$\left|(x-y)\,\cdot \,n(x)\right| \leq C_{\Omega}\,|x-y|^{1+\alpha}, \qquad \forall x,y \in \partial \Omega.$$
Consequently, with the notations of Lemma \ref{lem:ChVa}, there is a positive constant $C >0$ such that
$$\mathcal{J}(x,y) \leq \frac{C}{|x-y|^{d-1-2\alpha}}, \qquad \forall x,y \in \partial \Omega, x\neq y.$$
\end{lemme}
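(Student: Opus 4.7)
The statement has two parts; the second is an immediate consequence of the first combined with the explicit formula \eqref{eq:Jxy}, so the real work lies in establishing the Hölder-type flatness estimate $|(x-y)\cdot n(x)| \leq C_\Omega |x-y|^{1+\alpha}$.

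For the first inequality, the plan is to argue locally via the graph representation of $\partial\Omega$ and then patch by compactness. Fix $x_0 \in \partial\Omega$. Since $\partial\Omega$ is of class $\mathcal{C}^{1,\alpha}$, there exists a neighbourhood of $x_0$ and a Euclidean coordinate system centred at $x_0$ in which the last axis is aligned with $n(x_0)$ and $\partial\Omega$ is locally the graph $\{(w,\phi(w))\,:\,w \in U\}$ of a function $\phi \in \mathcal{C}^{1,\alpha}(U;\R)$ with $\phi(0)=0$ and $\nabla\phi(0)=0$. The $\mathcal{C}^{1,\alpha}$ regularity of $\nabla \phi$ and a first-order Taylor expansion then give $|\phi(w)| \leq C_{x_0}|w|^{1+\alpha}$ locally. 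For any $x=(w_x,\phi(w_x))$ and $y=(w_y,\phi(w_y))$ in this chart, write
$$(x-y)\cdot n(x) = (x-y)\cdot n(x_0) + (x-y)\cdot\bigl(n(x)-n(x_0)\bigr).$$
The first term equals $\phi(w_x)-\phi(w_y)$ up to the Taylor remainder of $\phi$ and is controlled by $C|x-y|^{1+\alpha}$ using $\nabla\phi \in \mathcal{C}^{0,\alpha}$ and $\nabla\phi(0)=0$; the second is controlled by $|x-y|\cdot |n(x)-n(x_0)| \leq C|x-y|^{1+\alpha}$ since the unit normal inherits the Hölder continuity of $\nabla\phi$. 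This gives the estimate in a neighbourhood of $x_0$, uniformly on a small ball. By compactness of $\partial\Omega$ one extracts a finite subcover and obtains the desired estimate for all pairs $(x,y)$ with $|x-y|$ smaller than some Lebesgue number $\delta_0 >0$. For pairs with $|x-y| \geq \delta_0$ the trivial bound $|(x-y)\cdot n(x)| \leq |x-y| \leq \delta_0^{-\alpha}|x-y|^{1+\alpha}$ concludes the argument, up to enlarging $C_\Omega$.

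For the second inequality, one simply plugs the first estimate, applied at both $x$ and $y$, into the explicit expression \eqref{eq:Jxy}:
$$\mathcal{J}(x,y) \leq \frac{C_\Omega|x-y|^{1+\alpha}}{|x-y|^{d+1}}\,C_\Omega|x-y|^{1+\alpha} = \frac{C_\Omega^{2}}{|x-y|^{d-1-2\alpha}}, \qquad x\neq y.$$
Setting $C=C_\Omega^{2}$ concludes.

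The only subtle point is the local Taylor expansion giving $|\phi(w)| \leq C|w|^{1+\alpha}$; this uses in an essential way the cancellation $\nabla\phi(0)=0$ together with the $\alpha$-Hölder continuity of $\nabla\phi$, and it is precisely the step that fails in pure $\mathcal{C}^1$ regularity. Everything else is routine once the local flattening is in place, and no additional hypothesis on $\Omega$ beyond the stated $\mathcal{C}^{1,\alpha}$ regularity is required.
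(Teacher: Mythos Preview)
The paper does not actually prove this lemma; it is quoted from \cite[Lemma 6.5]{LMKJMPA}, so there is no ``paper's own proof'' to compare against. Your derivation of the second inequality from the first via \eqref{eq:Jxy} is correct and is exactly how the consequence is meant to be read.

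However, your argument for the first inequality has a genuine gap. You fix an auxiliary chart centre $x_{0}$, let both $x$ and $y$ vary in that chart, and split
\[
(x-y)\cdot n(x)=(x-y)\cdot n(x_{0})+(x-y)\cdot\bigl(n(x)-n(x_{0})\bigr).
\]
You then claim that each term separately is $O(|x-y|^{1+\alpha})$. This is false. The first term equals $\phi(w_{x})-\phi(w_{y})$ exactly, and from $\nabla\phi(0)=0$ together with $\nabla\phi\in\mathcal{C}^{0,\alpha}$ you only get $|\phi(w_{x})-\phi(w_{y})|\leq [\nabla\phi]_{\alpha}\max(|w_{x}|,|w_{y}|)^{\alpha}\,|w_{x}-w_{y}|$, which is of order $|x-y|$ when $x,y$ sit at a fixed positive distance from $x_{0}$. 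Likewise, H\"older continuity of the normal gives $|n(x)-n(x_{0})|\leq C|x-x_{0}|^{\alpha}$, not $C|x-y|^{\alpha}$, so the second term is again only $O(|x-y|)$. A concrete check: with $\phi(w)=|w|^{1+\alpha}$, $w_{x}=1$, $w_{y}=1-\varepsilon$, both terms are of exact order $\varepsilon$ while $|x-y|^{1+\alpha}\sim\varepsilon^{1+\alpha}$. The desired bound for the \emph{sum} holds only because of a cancellation you have not exhibited.

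The clean fix is to centre the chart at $x$ itself (so $w_{x}=0$, $n(x)=e_{d}$). Then $(x-y)\cdot n(x)=-\phi(w_{y})$ and the Taylor estimate $|\phi(w_{y})|=|\phi(w_{y})-\phi(0)-\nabla\phi(0)\cdot w_{y}|\leq \tfrac{[\nabla\phi]_{\alpha}}{1+\alpha}|w_{y}|^{1+\alpha}\leq C|x-y|^{1+\alpha}$ gives the result directly; equivalently, in a chart centred at any $x_{0}$ one computes, up to the harmless factor $(1+|\nabla\phi(w_{x})|^{2})^{-1/2}$,
\[
(x-y)\cdot n(x)=-\bigl[\phi(w_{y})-\phi(w_{x})-\nabla\phi(w_{x})\cdot(w_{y}-w_{x})\bigr],
\]
which is the Taylor remainder of $\phi$ at $w_{x}$ and is bounded by $\tfrac{[\nabla\phi]_{\alpha}}{1+\alpha}|w_{x}-w_{y}|^{1+\alpha}$. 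Uniformity of the constant then follows from compactness of $\partial\Omega$ (uniform chart radii and uniform $\mathcal{C}^{1,\alpha}$ bounds on the graph functions), and your large-$|x-y|$ argument is fine as written.
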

 We recall then the following  generalization of the polar decomposition theorem (see \cite[Lemma 6.13, p.113]{voigt}):
\begin{lemme}\phantomsection\label{lem:polar}
Let $\bm{m}_{0}$ be the image of the measure $\bm{m}$ under the transformation $v \in \R^{d} \mapsto |v| \in [0,\infty),$ i.e. $\bm{m}_{0}(I)=\bm{m}\left(\{v \in \R^{d}\;;\;|v| \in I\}\right)$ for any Borel subset $I \subset \R^{+}.$ Then, for any $\psi \in L^{1}(\R^{d},\bm{m})$ it holds
$$\int_{\R^{d}}\psi(v)\bm{m}(\d v)=\frac{1}{|\S^{d-1}|}\int_{0}^{\infty}\bm{m}_{0}(\d\varrho)\int_{\S^{d-1}}\psi(\varrho\,\sigma)\d\sigma$$
where $\d\sigma$ denotes the Lebesgue measure on $\S^{d-1}$ with surface $|\S^{d-1}|.$ 
\end{lemme}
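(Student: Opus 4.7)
The plan is to reduce the identity to the classical spherical coordinates formula for Lebesgue measure, exploiting the two structural hypotheses on $\bm{m}$ in Assumption \ref{hypO}: orthogonal invariance, and absolute continuity with respect to Lebesgue measure. Since $\bm{m} \ll \d v$, I would first write $\bm{m}(\d v) = h(v)\,\d v$ for some nonnegative measurable density $h$, and then show that orthogonal invariance forces $h$ to admit a radial representative. Concretely, I would set
$$\tilde{h}(\varrho):=\frac{1}{|\S^{d-1}|}\int_{\S^{d-1}}h(\varrho\,\sigma)\,\d\sigma$$
and check that $h(v)=\tilde{h}(|v|)$ for Lebesgue-a.e. $v\in\R^{d}$. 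This follows from a standard Fubini/Radon-Nikodym argument combined with the fact that $\bm{m}(RA)=\bm{m}(A)$ for every orthogonal matrix $R$ and every Borel set $A$, together with the fact that Lebesgue measure itself is invariant under $R$.

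Once $h(v) = \tilde{h}(|v|)$ is known, the classical spherical coordinates formula applied to the nonnegative function $\psi\cdot\tilde{h}(|\cdot|)$ gives
$$\int_{\R^{d}}\psi(v)\,\bm{m}(\d v)=\int_{0}^{\infty}\varrho^{d-1}\,\tilde{h}(\varrho)\,\d\varrho\int_{\S^{d-1}}\psi(\varrho\,\sigma)\,\d\sigma.$$
I would then identify $\bm{m}_{0}$ explicitly: testing against $\ind_{I}$ for a Borel subset $I\subset[0,\infty)$ gives $\bm{m}_{0}(I)=|\S^{d-1}|\int_{I}\varrho^{d-1}\tilde{h}(\varrho)\,\d\varrho$, i.e.
$$\bm{m}_{0}(\d\varrho)=|\S^{d-1}|\,\varrho^{d-1}\,\tilde{h}(\varrho)\,\d\varrho,$$
and substituting this back into the previous display produces the claimed identity verbatim.

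The main (mild) obstacle is the justification that the density $h$ can be chosen radial; this is entirely measure-theoretic and routine but does require some care with null sets. An arguably cleaner alternative, bypassing densities altogether, would be a disintegration argument: both sides of the asserted identity define $\sigma$-finite orthogonally invariant Borel measures on $\R^{d}\setminus\{0\}$ whose radial projection is $\bm{m}_{0}$, and any orthogonally invariant $\sigma$-finite Borel measure is uniquely determined by its radial projection (the conditional measure on each sphere must be the normalized uniform measure). Either route is short, so I would opt for whichever is more economical given the rest of the paper; in the present setting the density-based route is immediate since absolute continuity is already assumed.
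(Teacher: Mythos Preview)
Your argument is correct. Note, however, that the paper does not actually prove this lemma: it is quoted verbatim from \cite[Lemma 6.13, p.~113]{voigt} and stated without proof. So there is no ``paper's own proof'' to compare against here.

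A minor remark on scope: the result as cited from Voigt holds for any orthogonally invariant Borel measure $\bm{m}$, without the absolute continuity hypothesis (Assumption~\ref{hypO}~(3)). Your density-based route relies on that extra hypothesis, whereas your disintegration alternative does not --- and the latter is presumably closer in spirit to the general argument in \cite{voigt}. In the present paper's setting both hypotheses are in force, so either route is perfectly adequate; your choice of the density-based argument is the more economical one given what is already assumed, and the identification $\bm{m}_{0}(\d\varrho)=|\S^{d-1}|\varrho^{d-1}\tilde{h}(\varrho)\,\d\varrho$ you obtain along the way is exactly the structural form used later in Assumption~\ref{hypr0}.
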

\begin{nb} Notice that, under the assumption $0 \notin V$, one sees that the measure $\bm{m}_{0}$ is supported on $[r_{0},\infty)$ where $r_{0}$ is defined in \eqref{eq:vro}.\end{nb}
We can deduce from the above change of variables the following useful expression for $\H\mathsf{M}_{\l}\H$ (see \cite[Proposition 6.8]{LMKJMPA}).  
\begin{propo}\label{lemHLH} Assume that $\H$ satisfy Assumptions \ref{hypH}. For any $\l \in \overline{\C}_{+}$, it holds
\begin{equation}\label{eq:hmH}
\mathsf{HM_{\l}H}\varphi(x,v)=\int_{\Gamma_{+}}\mathscr{J}_{\l}(x,v,y,w)\varphi(y,w)\,|w\cdot n(y)|\bm{m}(\d w)\pi(\d y)\end{equation}
where
\begin{equation}\label{eq:Jlam}
\mathscr{J}_{\l}(x,v,y,w)=\mathcal{J}(x,y)\int_{0}^{\infty}\varrho\,\bm{k}(x,|v|,\varrho)\bm{k}(y,\varrho,|w|)\exp\left(-\lambda\frac{|x-y|}{\varrho}\right)\frac{\bm{m}_{0}(\d\varrho)}{|\S^{d-1}|}\end{equation}
for any $(x,v) \in \Gamma_{-},$ $(y,w) \in \Gamma_{+}.$
\end{propo}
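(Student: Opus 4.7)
The plan is a direct computation: unfold the three operators in sequence, switch to polar coordinates on the velocity variable via Lemma \ref{lem:polar}, exploit the scaling invariance of the backward travel time to decouple speed and direction, and finally apply the change of variables of Proposition \ref{lem:ChVa} to convert the remaining spherical integral into one over $\partial\Omega$.

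First, I would fix $(x,v)\in\Gamma_{-}$ and simply write out
\[
\H M_{\l}\H\varphi(x,v)=\int_{v''\cdot n(x)>0}\bm{k}(x,|v|,|v''|)\,[M_{\l}\H\varphi](x,v'')\,|v''\cdot n(x)|\,\bm{m}(\d v'').
\]
Using the definition of $M_{\l}$ and denoting $y:=x-\tau_{-}(x,v'')v''\in\partial\Omega$, one has $(y,v'')\in\Gamma_{-}$, so a second application of $\H$ yields
\[
[M_{\l}\H\varphi](x,v'')=e^{-\l\tau_{-}(x,v'')}\!\!\int_{w\cdot n(y)>0}\!\!\bm{k}(y,|v''|,|w|)\varphi(y,w)\,|w\cdot n(y)|\,\bm{m}(\d w).
\]

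Next I would introduce polar coordinates $v''=\varrho\sigma$ with $\varrho=|v''|$ and $\sigma\in\S_{+}(x)$, invoking Lemma \ref{lem:polar} to get $\bm{m}(\d v'')=|\S^{d-1}|^{-1}\bm{m}_{0}(\d\varrho)\,\d\sigma$. The key observation here, and the only genuinely geometric step, is the scaling identity
\[
\tau_{-}(x,\varrho\sigma)\,\varrho\sigma=\tau_{-}(x,\sigma)\,\sigma,
\]
which follows from the homogeneity $\tau_{-}(x,\varrho\sigma)=\varrho^{-1}\tau_{-}(x,\sigma)$. Consequently the backward hitting point $y$ depends only on $(x,\sigma)$ and not on the speed $\varrho$, and $|x-y|=\tau_{-}(x,\sigma)$, so that $e^{-\l\tau_{-}(x,v'')}=e^{-\l|x-y|/\varrho}$. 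I would also record $|v''\cdot n(x)|=\varrho|\sigma\cdot n(x)|$. After Fubini, the integrand factorizes into a function of $\varrho$ (which will give the kernel $\mathscr{J}_{\l}$) and a function of $\sigma$ carrying a factor $|\sigma\cdot n(x)|$.

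At this point I would apply Proposition \ref{lem:ChVa} to the $\sigma$-integral: the map $\sigma\mapsto y(x,\sigma)$ corresponds to $\sigma=(x-y)/|x-y|$, and the change of variables produces exactly the Jacobian $\mathcal{J}(x,y)$ and the measure $\pi(\d y)$ on $\partial\Omega$. Collecting everything, the inner $\varrho$-integral becomes
\[
\int_{0}^{\infty}\varrho\,\bm{k}(x,|v|,\varrho)\bm{k}(y,\varrho,|w|)\exp\!\left(-\l\tfrac{|x-y|}{\varrho}\right)\frac{\bm{m}_{0}(\d\varrho)}{|\S^{d-1}|},
\]
which multiplied by $\mathcal{J}(x,y)$ is precisely $\mathscr{J}_{\l}(x,v,y,w)$, yielding \eqref{eq:hmH}.

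The manipulation is mostly bookkeeping; the only point requiring a touch of care is verifying that the scaling identity for $\tau_{-}$ makes $y$ independent of $\varrho$, since it is this decoupling that allows Fubini to separate the $\varrho$-integral (which becomes the kernel) from the $\sigma$-integral (to which Proposition \ref{lem:ChVa} applies cleanly). Justification of Fubini itself is immediate for $\l\in\overline{\C}_{+}$ thanks to $|e^{-\l|x-y|/\varrho}|\leq 1$ and the boundedness estimates recalled in Section \ref{sec:Ml}, so the computation extends by continuity from $\mathrm{Re}\,\l>0$ to the closed right half-plane.
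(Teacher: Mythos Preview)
Your proof is correct and follows exactly the approach one would expect: unfold $\H\mathsf{M}_{\l}\H$, pass to polar coordinates via Lemma~\ref{lem:polar}, use the homogeneity $\tau_{-}(x,\varrho\sigma)=\varrho^{-1}\tau_{-}(x,\sigma)$ to decouple speed from direction, and then apply Proposition~\ref{lem:ChVa} to turn the spherical integral into one over $\partial\Omega$. Note that the present paper does not actually supply its own proof of this proposition---it is imported from \cite[Proposition~6.8]{LMKJMPA}---and your argument is precisely the computation carried out there using the same two ingredients (Lemma~\ref{lem:polar} and Proposition~\ref{lem:ChVa}), so there is nothing to add.
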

 
\subsection{Weak-compactness} 
In \cite[Section 5]{LMR}, we derived in a broad generality the weak-compactness of $\H\mathsf{M}_{0}\H$ for a general class of diffuse boundary operator $\H$ (see \cite[Theorem 5.1]{LMR} for a precise statement). For a given $x \in \partial\Omega$, we introduce the bounded operator
$$\H(x) \in \mathscr{B}(L^{1}(\Gamma_{+}(x)),L^{1}(\Gamma_{-}(x))) $$
with kernel $\bm{k}(x,\cdot,\cdot)$. We introduce the following definition
\begin{defi} We say that the family 
$$\H(x) \in \mathscr{B}(L^{1}(\Gamma_{+}(x)),L^{1}(\Gamma_{-}(x))), \qquad \qquad x \in \partial\Omega$$
is collectively weakly compact if, for any $x \in \partial\Omega$, $\H(x)$ is weakly-compact and
\begin{equation*}\label{eq:Sm}
\lim_{m\to\infty}\sup_{x \in \partial\Omega}\sup_{v'\in \Gamma_{+}(x)}\int_{S_{m}(x,v')}\bm{k}(x,v,v')\,\bm{\mu}_{x}(\d v)=0\end{equation*}
where, for any $m \in \N$ and any $(x,v') \in \Gamma_{+}$
 $$S_{m}(x,v')=\{v \in \Gamma_{-}(x)\;;\;|v| \geq m\} \cup \{v \in \Gamma_{-}(x)\;;\;\bm{k}(x,v,v') \geq m\}.$$
\end{defi}
We recall a key weak compactness result from \cite{LMR} which holds for $\partial\Omega$ of class $\mathcal{C}^{1}$. The proof established therein is very long and highly technical but, thanks to Proposition \ref{lemHLH}, we are able to provide a new and much shorter proof for $\partial\Omega$ of class $\mathcal{C}^{1,\alpha}$ $(\alpha >0)$, see Appendix \ref{appCom}:
\begin{theo}\label{theo:weak-com} Under Assumptions \ref{hypH}, assume that the family
$$\H(x) \in \mathscr{B}(L^{1}(\Gamma_{+}(x)),L^{1}(\Gamma_{-}(x))), \qquad \qquad x \in \partial\Omega$$
is collectively weakly compact. 
Then, $\H\mathsf{M}_{0}\H\::\:\lp \to \lm$ 
is weakly-compact.\end{theo}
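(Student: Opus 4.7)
The plan is to verify the weak compactness of $\H\mathsf{M}_{0}\H\::\:\lp \to \lm$ via the Dunford--Pettis criterion, using the explicit kernel representation from Proposition \ref{lemHLH}. For any $\varphi$ in the unit ball of $\lp$ and any measurable $A \subset \Gamma_{-}$, Fubini gives
$$\int_{A}|\H\mathsf{M}_{0}\H\varphi|\,\d\mu_{-} \leq \|\varphi\|_{\lp}\,\sup_{(y,w)\in \Gamma_{+}}\int_{A}\mathscr{J}_{0}(x,v,y,w)\,\d\mu_{-}(x,v),$$
so it suffices to prove uniform integrability and velocity-tightness of the parametrized family $\{\mathscr{J}_{0}(\cdot,\cdot,y,w)\::\:(y,w)\in\Gamma_{+}\}$ in $\lm$. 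Applying the polar decomposition (Lemma \ref{lem:polar}) and Fubini once more,
$$\int_{A}\mathscr{J}_{0}\,\d\mu_{-} = \int_{\pO}\mathcal{J}(x,y)\,\pi(\d x)\int_{0}^{\infty}\frac{\varrho\,\bm{k}(y,\varrho,|w|)}{|\S^{d-1}|}\,\bm{m}_{0}(\d\varrho)\,I(x,\varrho,A),$$
where $I(x,\varrho,A):=\int_{A_{x}}\bm{k}(x,|v|,\varrho)\,\bm{\mu}_{x}(\d v)$ and $A_{x}=\{v\::\:(x,v)\in A\}$. Rewriting the normalization \eqref{eq:normalise} in polar coordinates yields that $\int_{0}^{\infty}\varrho\,\bm{k}(y,\varrho,|w|)\,\bm{m}_{0}(\d\varrho)$ is uniformly bounded in $(y,w)$, and by Proposition \ref{lem:ChVa} with $g\equiv 1$ together with the symmetry $\mathcal{J}(x,y)=\mathcal{J}(y,x)$, $\int_{\pO}\mathcal{J}(x,y)\pi(\d x)$ is uniformly bounded in $y$.

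For uniform integrability, fix $\eta>0$ and use the collective weak compactness to pick $m\geq r_{0}$ with $\sup_{x,\varrho}\int_{S_{m}(x,\varrho)}\bm{k}(x,|v|,\varrho)\,\bm{\mu}_{x}(\d v) < \eta$ (which is meaningful since $\bm{k}$ is isotropic in each velocity argument). Splitting $I(x,\varrho,A) \leq \eta + m\,\bm{\mu}_{x}(A_{x}\cap\{|v|\leq m\})$, the $\eta$-term contributes $O(\eta)$ uniformly in $(y,w)$ via the boundedness established above. For the residual piece, invoke the $\mathcal{C}^{1,\alpha}$ regularity and Lemma \ref{lem:1}: choosing $\delta>0$ with $(1+\delta)(d-1-2\alpha)<d-1$ ensures that $\|\mathcal{J}(\cdot,y)\|_{L^{1+\delta}(\pi)}$ is bounded uniformly in $y$. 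Since the function $g_m(x):=\bm{\mu}_{x}(A_{x}\cap\{|v|\leq m\})$ is uniformly bounded while $\int g_m\,\d\pi \leq \mu_{-}(A)$, Hölder's inequality yields
$$\int_{\pO}\mathcal{J}(x,y)\,g_m(x)\,\pi(\d x) \leq C_{m}\,\mu_{-}(A)^{\delta/(1+\delta)},$$
which is $o(1)$ as $\mu_{-}(A)\to 0$, uniformly in $y$. This step is where the hardest work sits: bridging the fiber-wise control given by the collective weak compactness of $\H(x)$ with the spatial kernel $\mathcal{J}(x,y)$ that couples distinct boundary points. The $\mathcal{C}^{1,\alpha}$ hypothesis enters precisely to upgrade $\mathcal{J}(\cdot,y)$ from mere $L^{1}$- to $L^{1+\delta}$-integrability uniformly in $y$, and without this upgrade Hölder's inequality would fail to give a quantitative rate in $\mu_{-}(A)$.

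Velocity-tightness is then immediate from the same decomposition: for $R\geq m$ and $A=\{(x,v)\in\Gamma_{-}\::\:|v|>R\}$, one has $A_{x}\subset S_{m}(x,\varrho)$ for every $\varrho$, so $I(x,\varrho,A)\leq \eta$ directly, giving $\sup_{(y,w)}\int_{\{|v|>R\}}\mathscr{J}_{0}\,\d\mu_{-} = O(\eta)$. Combined with uniform integrability, this verifies the Dunford--Pettis criterion and yields the weak compactness of $\H\mathsf{M}_{0}\H$. The asserted power-compactness of $\H\mathsf{M}_{0}$ follows by a standard argument: $L^{1}$-spaces enjoy the Dunford--Pettis property, hence the square of any weakly compact operator is compact, so that from the weak compactness of $\H\mathsf{M}_{0}\H$ one deduces that a suitable power of $\H\mathsf{M}_{0}$ is compact.
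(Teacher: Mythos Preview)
Your argument is correct and complete. Both your approach and the paper's rely on Proposition~\ref{lemHLH} together with the $\mathcal{C}^{1,\alpha}$ estimate of Lemma~\ref{lem:1}, but the routes diverge after that. The paper first invokes an approximation/domination step (imported from \cite{LMR}) to reduce to the model kernel $\bm{k}\equiv 1$ on a bounded velocity shell, so that $\H\mathsf{M}_{0}\H$ is dominated by a purely spatial operator $\mathcal{J}_{0}\in\mathscr{B}(L^{1}(\pO))$; weak compactness of $\mathcal{J}_{0}$ then follows by truncating the singular kernel $|x-y|^{1+2\alpha-d}$ in operator norm. You instead work directly with the full kernel $\mathscr{J}_{0}$ and verify Dunford--Pettis, using the collective weak compactness of $\{\H(x)\}$ explicitly to split the fibre integral $I(x,\varrho,A)$ and then exploiting $\mathcal{J}(\cdot,y)\in L^{1+\delta}(\pO)$ uniformly in $y$ via H\"older. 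The paper's approach is shorter once the reduction to the model case is granted, but that reduction hides precisely where the hypothesis on $\{\H(x)\}$ enters; your argument is self-contained, makes the role of collective weak compactness transparent, and avoids having to reproduce or cite the approximation machinery from \cite{LMR}. A minor observation: your two-parameter conclusion $C\eta + C_{m}'\,\mu_{-}(A)^{\delta/(1+\delta)}$ has $C_{m}'$ depending on $m$ (hence on $\eta$), so the order of quantifiers matters---first fix $\eta$, then choose $\mu_{-}(A)$ small---which you implicitly do, but it is worth stating.
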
 

\section{Main results}\label{sec:spec}
\emph{
In all this Section, we will always assume that Assumptions \ref{hypO} and \ref{hypH} hold true together with the conclusion of Theorem \ref{theo:weak-com}, i.e.
$$\H\mathsf{M}_{0}\H\:\::\:\lp \to \lm \qquad \text{ is weakly-compact}.$$
It will be assumed implicitly in all the next statements without further mention.}

\subsection{Fine properties of $\T_{\H}$} We begin with a full description of the spectrum of the transport operator $\T_{\H}$ under our main assumption about the velocity space $V$ which we recall is 
$$0 \notin V.$$
Thus, \eqref{eq:vro} holds true. In this case, one sees that the measure $\bm{m}_{0}$ appearing in Lemma \ref{lem:polar} is supported on a subset of $[r_{0},\infty)$ and, as already mentioned,
\begin{equation}\label{eq:t-r_0}
t_{-}(x,v) \leq \frac{D}{r_{0}}, \qquad \forall (x,v) \in \overline{\Omega} \times V\,.\end{equation}
This results readily in the following properties of the operators introduced in Section \ref{sec:Ml}
\begin{lemme}\label{lem:entire} The mappings 
\begin{equation*}\begin{split}
\lambda \in \C \longmapsto \mathsf{\Xi}_{\l} \in \mathscr{B}(\lm,X), \qquad &\lambda \in \C \longmapsto \mathsf{M}_{\l} \in \mathscr{B}(\lm,\lp)\\
\lambda \in \C \longmapsto \mathsf{G}_{\l} \in \mathscr{B}(X,\lp), \qquad  &\lambda \in \C \longmapsto \mathsf{R}_{\l} \in \mathscr{B}(X)\end{split}
\end{equation*} are all well-defined and analytic (i.e. there are \emph{entire} mappings). In particular, $\mathfrak{S}(\T_{0})=\varnothing.$ 
\end{lemme}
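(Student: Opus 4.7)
The plan is to exploit the uniform bound $t_{\pm}(x,v)\leq D/r_{0}$ (which follows immediately from $|v|\geq r_{0}$) to control the exponential factors $e^{-\lambda t_{\pm}}$, $e^{-\lambda \tau_{-}}$ and $e^{-\lambda s}$ appearing in the definitions of $\mathsf{M}_{\lambda}, \mathsf{\Xi}_{\lambda}, \mathsf{G}_{\lambda}, \mathsf{R}_{\lambda}$, and then upgrade boundedness to analyticity by a power-series expansion.

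\textbf{Step 1 (uniform bounds on all of $\C$).} For any $\lambda\in\C$ and any $(x,v)\in\bar\Omega\times V$ one has $|e^{-\lambda t_{\pm}(x,v)}|\leq e^{|\mathrm{Re}\lambda|\,D/r_{0}}\leq e^{|\lambda|\,D/r_{0}}$, and similarly for $|e^{-\lambda s}|$ with $s\in[0,\tau_{-}(x,v)]$. Inserting this in the four defining formulae of Section~\ref{sec:Ml} and repeating the estimates that yielded \eqref{eq:XiLRL} (together with $\|\mathsf{M}_{\lambda}\|\leq 1$, $\|\mathsf{G}_{\lambda}\|\leq 1$ for $\mathrm{Re}\lambda>0$), I get, for every $\lambda\in\C$,
\begin{equation*}
\|\mathsf{M}_{\lambda}\|_{\mathscr{B}(\lm,\lp)} \leq e^{|\lambda|D/r_{0}},\qquad
\|\mathsf{G}_{\lambda}\|_{\mathscr{B}(X,\lp)} \leq \tfrac{D}{r_{0}}e^{|\lambda|D/r_{0}},
\end{equation*}
and analogous bounds for $\mathsf{\Xi}_{\lambda}$ and $\mathsf{R}_{\lambda}$ (with the gain $\lambda^{-1}$ of \eqref{eq:XiLRL} replaced by the constant $D/r_{0}$). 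In particular each of the four operators is well-defined and bounded for all $\lambda\in\C$.

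\textbf{Step 2 (analyticity via norm-convergent expansions).} Using the absolutely convergent series $e^{-\lambda t}=\sum_{n\geq 0}(-\lambda)^{n}t^{n}/n!$ valid for $0\leq t\leq D/r_{0}$, I can rewrite each operator as a power series in $\lambda$. For instance,
\begin{equation*}
\mathsf{M}_{\lambda}u(x,v)=\sum_{n=0}^{\infty}\frac{(-\lambda)^{n}}{n!}\,\tau_{-}(x,v)^{n}\,u(x-\tau_{-}(x,v)v,v),\qquad (x,v)\in\Gamma_{+},
\end{equation*}
and the $n$-th coefficient has $\mathscr{B}(\lm,\lp)$-norm bounded by $(D/r_{0})^{n}$, so the series converges in operator norm on every bounded set of $\C$. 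The same expansion works verbatim for $\mathsf{\Xi}_{\lambda}$, and for $\mathsf{G}_{\lambda},\mathsf{R}_{\lambda}$ after expanding $e^{-\lambda s}$ inside the $s$-integral and interchanging summation and integration (justified by Fubini on $[0,\tau_{-}(x,v)]\subset[0,D/r_{0}]$). Hence each of the four maps is entire in $\lambda$.

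\textbf{Step 3 (spectrum of $\T_{0}$).} Specialising \eqref{eq:reso} to $\H=0$ gives $\Rs(\lambda,\T_{0})=\mathsf{R}_{\lambda}$ for $\mathrm{Re}\lambda>0$. Since $\mathsf{R}_{\lambda}$ is, by Step~2, an entire $\mathscr{B}(X)$-valued map and since the resolvent identities $(\lambda-\T_{0})\mathsf{R}_{\lambda}\varphi=\varphi$ for $\varphi\in X$ and $\mathsf{R}_{\lambda}(\lambda-\T_{0})\psi=\psi$ for $\psi\in\mathcal{D}(\T_{0})$ hold on the right half-plane, they extend by analytic continuation to every $\lambda\in\C$. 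Therefore $\lambda\in\rho(\T_{0})$ with $\Rs(\lambda,\T_{0})=\mathsf{R}_{\lambda}$ for all $\lambda\in\C$, i.e.\ $\mathfrak{S}(\T_{0})=\varnothing$.

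There is no real obstacle here: the only point requiring a bit of care is the interchange of summation and integration for $\mathsf{G}_{\lambda}$ and $\mathsf{R}_{\lambda}$, which is immediate from the uniform bound on the travel times. The whole argument relies entirely on the assumption $0\notin V$, which via \eqref{eq:tbounded} replaces the problematic $\lambda^{-1}$ factors of \eqref{eq:XiLRL} by harmless constants depending on $D/r_{0}$.
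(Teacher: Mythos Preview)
Your proof is correct and follows the same overall strategy as the paper: exploit the uniform bound $t_{-}\leq D/r_{0}$ to control the exponential factors for all $\lambda\in\C$, then deduce analyticity. The only notable difference is in how analyticity is established. The paper argues weakly---showing that $\lambda\mapsto\langle g,\mathsf{\Xi}_{\lambda}f\rangle$ is analytic for every $f$ and every $g$ in the dual, then invoking the standard fact (Arendt, Proposition~A.3) that weak analyticity implies norm analyticity---whereas you expand $e^{-\lambda t}$ as a power series and check norm convergence term by term. Your route is more elementary and self-contained (no external reference needed), while the paper's is shorter to write down once the abstract lemma is granted. Both are standard.

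One small remark on Step~3: saying that the resolvent identities ``extend by analytic continuation'' hides the fact that $(\lambda-\T_{0})\mathsf{R}_{\lambda}\varphi=\varphi$ involves the unbounded operator $\T_{0}$, so one must first know $\mathsf{R}_{\lambda}\varphi\in\mathcal{D}(\T_{0})$ for every $\lambda$. This follows because $\T_{0}$ is closed and the pair $(\mathsf{R}_{\lambda}\varphi,\,\lambda\mathsf{R}_{\lambda}\varphi-\varphi)$ is an entire $X\times X$-valued function lying in the (closed) graph of $\T_{0}$ for $\mathrm{Re}\lambda>0$, hence everywhere; this is routine, but worth a sentence. The paper itself does not spell this step out either.
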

\begin{proof} The proof of the result is straightforward. For instance, one can check easily that, from \eqref{eq:t-r_0} and \eqref{eq:ML1}, 
\begin{equation}\label{eq:masfro}
\|\mathsf{M}_{\l}\|_{\mathscr{B}(\lm,\lp)} \leq \exp\left((\mathrm{Re}\l)^{-}Dr_{0}^{-1}\right)\|\mathsf{M}_{0}\|_{\mathscr{B}(\lm,\lp)}=\exp\left((\mathrm{Re}\l)^{-}Dr_{0}^{-1}\right)\end{equation}
where $(\mathrm{Re}\l)^{-}=\max(0,-\mathrm{Re}\l)$ is the negative part of $\mathrm{Re}\l$. One argues in the same way for the other operators to prove they are bounded operators. As far as analyticity is concerned, let us for instance focus on $\mathsf{\Xi}_{\l}$. For any $f \in \lm$ and $g \in X^{\star}$ (the dual of $X$) the mapping
$$\l \in \C \mapsto \langle g,\mathsf{\Xi}_{\l}f\rangle  \in \C$$
is analytic (where $\langle\cdot,\cdot\rangle $ is the duality bracket between $X^{\star}$ and $X$). This proves that 
$$\l \in \C \longmapsto \mathsf{\Xi}_{\l} \in \mathscr{B}(\lm,X)$$ is analytic (see \cite[Proposition A.3, Appendix A]{arendt}). One argues in the same way for the other operators.\end{proof}

A first result  about the spectrum of $\T_{\H}$ is the following 
\begin{lemme}\label{lem:spec} Let $\lambda \in \C$. Then, $\lambda \in \mathfrak{S}(\T_{\H})$ if and only if $1 \in \mathfrak{S}(\mathsf{M}_{\l}H).$ In particular $\mathfrak{S}(\T_{\H})=\mathfrak{S}_{p}(\T_{\H}).$
\end{lemme}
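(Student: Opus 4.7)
The plan is to exploit the resolvent identity from Proposition \ref{propo:resolvante}, which for $\mathrm{Re}\,\l > 0$ reads
$$\Rs(\l,\T_\H)=\mathsf{R}_\l + \mathsf{\Xi}_\l \H (I-\mathsf{M}_\l \H)^{-1}\mathsf{G}_\l.$$
Under $0\notin V$, Lemma \ref{lem:entire} asserts that $\mathsf{R}_\l,\mathsf{\Xi}_\l,\mathsf{G}_\l,\mathsf{M}_\l$ are entire in $\l$, so the right-hand side is well-defined for every $\l\in\C$ with $1\notin\mathfrak{S}(\mathsf{M}_\l \H)$. First I would check that, for any such $\l$ and any $g\in X$, the function $f:=\mathsf{R}_\l g + \mathsf{\Xi}_\l \H (I-\mathsf{M}_\l \H)^{-1}\mathsf{G}_\l g$ lies in $\D(\T_\H)$ and satisfies $(\l-\T_\H)f=g$. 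The computation uses $\mathsf{R}_\l g_{|\Gamma_-}=0$, $\mathsf{\Xi}_\l u_{|\Gamma_-}=u$, $\mathsf{R}_\l g_{|\Gamma_+}=\mathsf{G}_\l g$, $\mathsf{\Xi}_\l u_{|\Gamma_+}=\mathsf{M}_\l u$ together with the algebraic identity $I+\mathsf{M}_\l \H(I-\mathsf{M}_\l \H)^{-1}=(I-\mathsf{M}_\l \H)^{-1}$, while $(\l-\mathsf{T}_{\mathrm{max}})f=g$ is immediate. This establishes the implication $1\notin\mathfrak{S}(\mathsf{M}_\l \H)\Rightarrow\l\in\rho(\T_\H)$.

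Next I would set up a direct bijection of eigenvectors via characteristics. If $f\in\D(\T_\H)\setminus\{0\}$ satisfies $\T_\H f=\l f$, integrating $v\cdot\nabla_x f+\l f=0$ along free-flight trajectories back to the boundary gives $f=\mathsf{\Xi}_\l f_{|\Gamma_-}=\mathsf{\Xi}_\l \H f_{|\Gamma_+}$, whose outgoing trace reads $f_{|\Gamma_+}=\mathsf{M}_\l \H f_{|\Gamma_+}$. Since $f_{|\Gamma_+}=0$ would force $f=\mathsf{\Xi}_\l 0=0$, this trace is a nonzero fixed point of $\mathsf{M}_\l \H$, i.e.\ $1\in\mathfrak{S}_p(\mathsf{M}_\l \H)$. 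Conversely, given $\psi\in L^1_+\setminus\{0\}$ with $\mathsf{M}_\l \H\psi=\psi$, the function $f:=\mathsf{\Xi}_\l \H\psi$ belongs to $\D(\T_\H)$ (its boundary condition $f_{|\Gamma_-}=\H\psi=\H f_{|\Gamma_+}$ is satisfied), has outgoing trace $\psi\neq 0$, and obeys $\T_\H f=\l f$. Hence $\l\in\mathfrak{S}_p(\T_\H)\Longleftrightarrow 1\in\mathfrak{S}_p(\mathsf{M}_\l \H)$.

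Finally, to upgrade the two ingredients to the full spectral equivalence and conclude $\mathfrak{S}(\T_\H)=\mathfrak{S}_p(\T_\H)$, I would invoke power-compactness of $\mathsf{M}_\l \H$ on $L^1_+$ uniformly in $\l\in\C$. This follows from the weak compactness of $\H\mathsf{M}_0 \H$ (Theorem \ref{theo:weak-com}) combined with the factorization $\mathsf{M}_\l=e^{-\l\tau_-}\mathsf{M}_0$, whose multiplier is bounded on $L^1_+$ thanks to the estimate $\tau_-\leq D/r_0$, and the Dunford--Pettis property of $L^1$, which together imply that some iterate $(\mathsf{M}_\l \H)^n$ is compact. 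The Fredholm alternative applied to $I-\mathsf{M}_\l \H$ then upgrades $1\in\mathfrak{S}(\mathsf{M}_\l \H)$ to $1\in\mathfrak{S}_p(\mathsf{M}_\l \H)$, and chaining the three steps yields
$$\l\in\mathfrak{S}(\T_\H)\;\Longleftrightarrow\; 1\in\mathfrak{S}(\mathsf{M}_\l \H)\;\Longleftrightarrow\; 1\in\mathfrak{S}_p(\mathsf{M}_\l \H)\;\Longleftrightarrow\; \l\in\mathfrak{S}_p(\T_\H),$$
whence $\mathfrak{S}(\T_\H)=\mathfrak{S}_p(\T_\H)$. The delicate point is securing power-compactness of $\mathsf{M}_\l \H$ for arbitrary complex $\l$; this is precisely where the hypothesis $0\notin V$ is decisive, since it keeps $e^{-\l\tau_-}$ bounded on all of $\C$ and allows the compactness at $\l=0$ to persist under analytic continuation.
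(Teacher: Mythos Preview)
Your proposal is correct and follows essentially the same route as the paper: one direction via the resolvent identity $\Rs(\l,\T_\H)=\mathsf{R}_\l+\mathsf{\Xi}_\l\H\Rs(1,\mathsf{M}_\l\H)\mathsf{G}_\l$, the other via power-compactness of $\mathsf{M}_\l\H$ (deduced from weak compactness of $\H\mathsf{M}_0\H$ by the domination $|\mathsf{M}_\l\varphi|\le C\,\mathsf{M}_0|\varphi|$ and Dunford--Pettis) together with the explicit construction $\varphi=\mathsf{\Xi}_\l\H\psi$ of an eigenfunction of $\T_\H$ from a fixed point $\psi$ of $\mathsf{M}_\l\H$. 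Your additional verification of the reverse eigenvector correspondence $\l\in\mathfrak{S}_p(\T_\H)\Rightarrow 1\in\mathfrak{S}_p(\mathsf{M}_\l\H)$ is not needed for the statement (it follows from the cycle of implications anyway) but does no harm.
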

\begin{proof} We first notice that, thanks to Lemma \ref{lem:entire}, it is straightforward that, if $1 \notin \mathfrak{S}(\mathsf{M}_{\l}\H)$ then $(\l-\T_{\H})$ is invertible with
$$\Rs(\l,\T_{\H})=\Rs(\l,\T_{0})+\mathsf{\Xi}_{\l}\H\Rs(1,\mathsf{M}_{\l}\H)\mathsf{G}_{\l}.$$ 
This proves that, if $\l \in \mathfrak{S}(\T_{\H})$ then $1 \in \mathfrak{S}(\mathsf{M}_{\l}\H).$ Conversely, assume that $1 \in \mathfrak{S}(\mathsf{M}_{\l}\H)$. Since 
\begin{equation}\label{eq:Mlvarphi}\left|\mathsf{M}_{\l}\varphi\right| \leq \mathsf{M}_{\mathrm{Re}\l}\,\left|\varphi\right| \leq \begin{cases} \mathsf{M}_{0}|\varphi| \quad &\text{if } \mathrm{Re}\l \geq 0\\
\exp\left(-\mathrm{Re}\l\,D\,r_{0}^{-1}\right)\mathsf{M}_0|\varphi| \quad &\text{if } \mathrm{Re}\l < 0.
\end{cases}\end{equation}
Because
$\H\mathsf{M}_{0}\H \in \mathscr{B}(\lp,\lm)$ is weakly-compact, so is $\H\mathsf{M}_{\l}\H$ $(\l\in\C)$ by  a domination argument. Thus, for $\l \in \C$, $\left(\mathsf{M}_{\l}\H\right)^{2} \in \mathscr{B}(\lp)$ is weakly-compact and $\left(\mathsf{M}_{\l}\H\right)^{4}$ is compact by the Dunford-Pettis property  and therefore $\mathfrak{S}(\mathsf{M}_{\l}\H)=\mathfrak{S}_{p}(\mathsf{M}_{\l}\H).$  Let then $\psi \in \lp$ be such that $\psi=\mathsf{M}_{\l}\H\psi$, setting $u=\H\psi$ and $\varphi=\mathsf{\Xi}_{\l}u$ one sees that $\varphi \neq 0$, $\varphi \in \D(\T_{\mathrm{max}})$ with $\T_{\mathrm{max}}\varphi=\l\mathsf{\Xi}_{\l}u=\l\varphi$ since $\varphi$ is the unique solution to \eqref{BVP1} (with $g=0$) according to \eqref{eq:XiTmax}. Moreover, by construction, 
$$\B^{-}\varphi=u \qquad \text{ and } \qquad 
\B^{+}\varphi=\B^{+}\mathsf{\Xi}_{\l}u=\mathsf{M}_{\l}u=\mathsf{M}_{\l}\H\psi=\psi$$
so that $\H\B^{+}\varphi=\H\psi=u=\B^{-}\varphi$ which implies $\varphi \in\D(\T_{\H}).$ This proves that $\l \in \mathfrak{S}_{p}(\T_{\H})$.
\end{proof}

\subsection{Useful decay estimates} The scope of this technical subsection is to establish the decay, as $|\mathrm{Im}\lambda| \to \infty,$ of $\left\|(\mathsf{M}_{\l}\H)^{2}\right\|_{\mathscr{B}(\lp)}$, which, in turn, will yield some quantitative decay estimates for some remainders of the series \eqref{eq:reso}.  It will be obtained under the following technical assumptions
\begin{hyp}\label{hypr0}
Assume that $\bm{m}_{0}$ is given by \footnote{This means that the measure $\bm{m}$ is absolutely continuous with respect to the Lebesgue measure over $\R^{d}$ with $\bm{m}(\d v)=\varpi(|v|)\d v$.}
$$\bm{m}_{0}(\d \varrho)=|\S^{d-1}|\varrho^{d-1}\varpi(\varrho)\d\varrho$$
for some positive and differentiable mapping $\varpi\::\:[r_{0},\infty) \to (0,\infty)$ with
\begin{equation}\label{eq:limkvarpi}
\lim_{\varrho\to\infty}\varrho^{d+2}\bm{k}(x,|v|,\varrho)\bm{k}(y,\varrho,|w|)\varpi(\varrho)=0, \qquad \forall (x,v)  \in \Gamma_{-}, (y,w) \in \Gamma_{+};\end{equation}
\begin{equation}\label{eq:inftyK}
\sup_{(y,w) \in \Gamma_{+}}\bm{k}(y,r_{0},|w|) < \infty\,.\end{equation}
Assume moreover that, for almost every $(x,v) \in \Gamma_{+}$ and almost every $(y,w)\in \Gamma_{+}$, the mappings
$$\varrho \in (r_{0},\infty) \longmapsto \bm{k}(x,|v|,\varrho) \in \R^{+}, \qquad \text{ and } \qquad \varrho \in (r_{0},\infty) \longmapsto \bm{k}(y,\varrho,|w|)\in \R^{+}$$
are differentiable with 
\begin{equation}\label{eq:inftyintK}\sup_{(y,w) \in \Gamma_{+}}\int_{r_{0}}^{\infty}\varrho^{d+1}\left(\varrho\,\bm{k}(y,\varrho,|w|)\left|\varpi'(\varrho)\right|+\varrho\,\varpi(\varrho)\left|\partial_{\varrho}\bm{k}(y,\varrho,|w|)\right| + \bm{k}(y,\varrho,|w|)\varpi(\varrho)\right)\d\varrho < \infty;\end{equation}
 and \begin{equation}\label{eq:inftyderK}
\sup_{x \in \partial\Omega}\sup_{(y,w) \in \Gamma_{+}}\int_{r_{0}}^{\infty}\varrho^{d+2}\varpi(\varrho)\bm{k}(y,\varrho,|w|)\d\varrho\int_{\Gamma_{-}(x)}\left|\partial_{\varrho}\bm{k}(x,|v|,\varrho)\right|\bm{\mu}_{x}(\d v) < \infty.
\end{equation} 
\end{hyp}
The role of Assumptions \ref{hypr0} is mainly technical to ensure the following Lemma to hold and we will prove in Section \ref{sec:exam} that it can be checked for several models of physical interest:
\begin{lemme}\label{lem:Jl} Under Assumptions \ref{hypr0} and if $\partial\Omega$ is of class $\mathcal{C}^{1,\alpha}$ with $\alpha >\frac{1}{2}$, then for any $\lambda \in \C$, $\l \neq 0$, it holds
$$\sup_{(y,w) \in \Gamma_{+}}\int_{\Gamma_{-}}\left|\mathscr{J}_{\l}(x,v,y,w)\right|\d\mu_{-}(x,v) \leq \frac{C}{|\l|}\exp\left(Dr_{0}^{-1}(\mathrm{Re}\l)^{-}\right)$$
for some positive $C >0$ where $(\mathrm{Re}\l)^{-}=-\mathrm{min}(0,\mathrm{Re}\l)$ denotes the negative part of $\mathrm{Re}\l$.
\end{lemme}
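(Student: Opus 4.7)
The plan is to gain the factor $|\lambda|^{-1}$ via a single integration by parts in the radial variable $\varrho$ inside the definition of $\mathscr{J}_{\l}$, then to dispose of the $v$--integration using the normalization \eqref{eq:normalise}, and finally to absorb the remaining spatial singularity coming from $\mathcal{J}(x,y)$ thanks to the $\mathcal{C}^{1,\alpha}$ regularity via Lemma \ref{lem:1}.

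First, I would substitute $\bm{m}_{0}(\d\varrho)=|\S^{d-1}|\varrho^{d-1}\varpi(\varrho)\d\varrho$ into \eqref{eq:Jlam}, obtaining
\[
\mathscr{J}_{\l}(x,v,y,w)=\mathcal{J}(x,y)\int_{r_{0}}^{\infty}\varrho^{d}\bm{k}(x,|v|,\varrho)\bm{k}(y,\varrho,|w|)\varpi(\varrho)e^{-\l|x-y|/\varrho}\d\varrho.
\]
Since $\partial_{\varrho}e^{-\l|x-y|/\varrho}=\l|x-y|\varrho^{-2}e^{-\l|x-y|/\varrho}$, I would rewrite $e^{-\l|x-y|/\varrho}=\frac{\varrho^{2}}{\l|x-y|}\partial_{\varrho}e^{-\l|x-y|/\varrho}$ and integrate by parts. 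The boundary term at $\varrho=\infty$ is zero by \eqref{eq:limkvarpi}, and the boundary term at $\varrho=r_{0}$ is $-r_{0}^{d+2}\bm{k}(x,|v|,r_{0})\bm{k}(y,r_{0},|w|)\varpi(r_{0})e^{-\l|x-y|/r_{0}}$; the interior term involves $\partial_{\varrho}\bigl[\varrho^{d+2}\bm{k}(x,|v|,\varrho)\bm{k}(y,\varrho,|w|)\varpi(\varrho)\bigr]$. Bounding $|e^{-\l|x-y|/\varrho}|\leq\exp\bigl(Dr_{0}^{-1}(\mathrm{Re}\l)^{-}\bigr)$ uniformly in $\varrho\geq r_{0}$ yields
\[
|\mathscr{J}_{\l}(x,v,y,w)|\leq \frac{\mathcal{J}(x,y)}{|\l||x-y|}e^{Dr_{0}^{-1}(\mathrm{Re}\l)^{-}}\bigl(A(x,v,y,w)+B(x,v,y,w)\bigr),
\]
where $A$ is the boundary contribution at $r_{0}$ and $B$ is the $\varrho$--integral of the modulus of the derivative.

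Next I would integrate in $(x,v)\in\Gamma_{-}$, performing first the $v$--integration against $\bm{\mu}_{x}$. Expanding the product rule in $B$, every term except the one carrying $\partial_{\varrho}\bm{k}(x,|v|,\varrho)$ contains $\bm{k}(x,|v|,\varrho)$ undifferentiated; by the normalization \eqref{eq:normalise}, $\int_{\Gamma_{-}(x)}\bm{k}(x,|v|,\varrho)\bm{\mu}_{x}(\d v)=1$. Hence those terms integrate to expressions depending only on $(y,w,\varrho)$ that are controlled by \eqref{eq:inftyintK}, while the term with $\partial_{\varrho}\bm{k}(x,|v|,\varrho)$ is controlled by hypothesis \eqref{eq:inftyderK} (read as a Fubini-type nested integral). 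The boundary piece $A$ integrates similarly to $r_{0}^{d+2}\bm{k}(y,r_{0},|w|)\varpi(r_{0})$, which is uniformly bounded in $(y,w)$ by \eqref{eq:inftyK}. Thus
\[
\int_{\Gamma_{-}(x)}\bigl(A+B\bigr)\bm{\mu}_{x}(\d v)\leq C_{0}
\]
with $C_{0}$ independent of $(x,y,w)$.

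It remains to integrate the factor $\mathcal{J}(x,y)/|x-y|$ against $\pi(\d x)$ on $\partial\Omega$. By Lemma \ref{lem:1}, $\mathcal{J}(x,y)\leq C|x-y|^{-(d-1-2\alpha)}$, so $\mathcal{J}(x,y)/|x-y|\leq C|x-y|^{-(d-2\alpha)}$. Since $\partial\Omega$ is a $(d-1)$-dimensional manifold, this function is $\pi$-integrable precisely when $d-2\alpha<d-1$, i.e.\ $\alpha>\tfrac{1}{2}$, giving a finite bound uniform in $y\in\partial\Omega$. Combining all these estimates produces the claim. The main technical step is the integration by parts together with the careful bookkeeping of the four terms coming from the product rule; the critical structural point is the interplay between the loss $|x-y|^{-1}$ from the integration by parts and the gain $|x-y|^{2\alpha-1}$ from the $\mathcal{C}^{1,\alpha}$ regularity, which is exactly why the threshold $\alpha>1/2$ appears.
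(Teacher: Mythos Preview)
Your proposal is correct and follows essentially the same approach as the paper: integration by parts in $\varrho$ to extract the factor $|\lambda|^{-1}$ (with boundary term at $r_{0}$ controlled by \eqref{eq:inftyK} and vanishing at infinity by \eqref{eq:limkvarpi}), a product-rule split of the interior term into four pieces handled respectively by the normalization \eqref{eq:normalise} together with \eqref{eq:inftyintK} and \eqref{eq:inftyderK}, and finally Lemma~\ref{lem:1} to make the resulting kernel $|x-y|^{-(d-2\alpha)}$ integrable over $\partial\Omega$ for $\alpha>\tfrac12$. The only cosmetic difference is that the paper applies Lemma~\ref{lem:1} at the outset while you apply it at the end, which is immaterial.
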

\begin{proof} A more general proof has been given in \cite[Proposition 6.8]{LMKJMPA} to get a decay of order $1/|\l|$. We repeat the proof here to emphasize the difference and the emergence of the additional exponential term.  From \eqref{eq:Jlam} and Lemma \ref{lem:1}, one has for all $(x,v) \in \Gamma_{-},$ $(y,w) \in \Gamma_{+}$
$$\left|\mathscr{J}_{\l}(x,v,y,w)\right| \leq \frac{C_{\Omega}}{|x-y|^{d-1-2\alpha}}\left|\int_{r_{0}}^{\infty}\varrho\,\bm{k}(x,|v|,\varrho)\bm{k}(y,\varrho,|w|)\exp\left(-\l|x-y|\varrho^{-1}\right)\frac{\bm{m}_{0}(\d\varrho)}{|\S^{d-1}|}\right|.$$
for some positive constant $C_{\Omega}$. We compute this last integral as follows:
\begin{multline*}
\int_{r_{0}}^{\infty}\varrho\,\bm{k}(x,|v|,\varrho)\bm{k}(y,\varrho,|w|)\exp\left(-\l|x-y|\varrho^{-1}\right)\frac{\bm{m}_{0}(\d\varrho)}{|\S^{d-1}|}\\
=\frac{1}{\l|x-y|}\int_{r_{0}}^{\infty}\varrho^{d+2}\,\varpi(\varrho)\bm{k}(x,|v|,\varrho)\bm{k}(y,\varrho,|w|)
\left(\frac{\l|x-y|}{\varrho^{2}}\exp\left(-\l|x-y|\varrho^{-1}\right)\right)\d\varrho\end{multline*}
which, after integration by parts and using \eqref{eq:limkvarpi} yields
\begin{multline*}
\int_{r_{0}}^{\infty}\varrho\,\bm{k}(x,|v|,\varrho)\bm{k}(y,\varrho,|w|)\exp\left(-\l|x-y|\varrho^{-1}\right)\frac{\bm{m}_{0}(\d\varrho)}{|\S^{d-1}|}\\
=-\frac{1}{\l|x-y|}\int_{r_{0}}^{\infty}\frac{\d}{\d\varrho}\left[\varrho^{d+2}\,\varpi(\varrho)\bm{k}(x,|v|,\varrho)\bm{k}(y,\varrho,|w|)\right]\exp\left(-\l|x-y|\varrho^{-1}\right)\d\varrho\\
-\frac{1}{\l|x-y|}\left(r_{0}^{d+2}\varpi(r_{0})\bm{k}(x,|v|,r_{0})\bm{k}(y,r_{0},|w|)\exp\left(-\l|x-y|r_{0}^{-1}\right)\right).\end{multline*}
This results in the following estimate for the kernel $\mathscr{J}_{\l}(x,v,y,w)$:
$$
\left|\mathscr{J}_{\l}(x,v,y,w)\right| \leq \frac{C_{\Omega}}{|\l|\,|x-y|^{d-2\alpha}}\left(|I_{1}(\l,x,y,v,w)| + I_{2}(\l,x,v,y,w)\right)$$
with 
$$I_{1}(\l,x,v,y,w)=\int_{r_{0}}^{\infty}\frac{\d}{\d\varrho}\left[\varrho^{d+2}\,\varpi(\varrho)\bm{k}(x,|v|,\varrho)\bm{k}(y,\varrho,|w|)\right]\exp\left(-\l|x-y|\varrho^{-1}\right)\d\varrho$$
and
$$I_{2}(\l,x,v,y,w)=\left(r_{0}^{d+2}\varpi(r_{0})\bm{k}(x,|v|,r_{0})\bm{k}(y,r_{0},|w|)\exp\left(-\mathrm{Re}\l|x-y|r_{0}^{-1}\right)\right)$$
for any $\l \neq 0,$ $(x,v) \in \Gamma_{-},$ $(y,w) \in \Gamma_{+}.$ Notice that, for any $(y,w) \in \Gamma_{+}$ and $x \in \partial\Omega$
$$
\int_{\Gamma_{-}(x)} I_{2}(\l,x,v,y,w)|v \cdot n(x)|\bm{m}(\d v)
=r_{0}^{d+2}\varpi(r_{0}) \exp\left(-\mathrm{Re}\l|x-y|r_{0}^{-1}\right)\bm{k}(y,r_{0},|w|)$$
using the normalization \eqref{eq:normalise}. Thus
$$\int_{\Gamma_{-}(x)} I_{2}(\l,x,v,y,w)|v \cdot n(x)|\bm{m}(\d v) \leq C\,\exp\left((\mathrm{Re}\l)^{-}Dr_{0}^{-1}\right)\bm{k}(y,r_{0},|w|)$$
for some positive constant $C >0$ depending only on $r_{0}.$ Using \eqref{eq:inftyK} we get then 
\begin{equation}\label{eq:I2}
\sup_{(y,w) \in \Gamma_{+}}\int_{\Gamma_{-}(x)}I_{2}(\l,x,v,y,w)|v\cdot n(x)|\bm{m}(\d v) \leq C\|\bm{k}(\cdot,r_{0},\cdot)\|_{L^{\infty}(\Gamma_{+})}\exp\left((\mathrm{Re}\l)^{-}Dr_{0}^{-1}\right).\end{equation}
Evaluating the derivative with respect to $\varrho$ thanks to Leibniz rule, one writes 
$$I_{1}(\l,x,v,y,w)=\sum_{j=1}^{4}I_{1,j}(\l,x,v,y,w)$$
where
\begin{equation*}\begin{cases}
I_{1,1}(\l,x,v,y,w)&=\ds\int_{r_{0}}^{\infty}\varrho^{d+2}\varpi(\varrho)\bm{k}(x,|v|,\varrho)\,\partial_{\varrho}\bm{k}(y,\varrho,|w|)\exp\left(-\l|x-y|\varrho^{-1}\right)\d\varrho\\
\\
I_{1,2}(\l,x,v,y,w)&=\ds\int_{r_{0}}^{\infty}\varrho^{d+2}\varpi(\varrho)\partial_{\varrho}\bm{k}(x,|v|,\varrho)\, \bm{k}(y,\varrho,|w|)\exp\left(-\l|x-y|\varrho^{-1}\right)\d\varrho\\
\\
I_{1,3}(\l,x,v,y,w)&=\ds\int_{r_{0}}^{\infty}\varrho^{d+2}\varpi'(\varrho)\bm{k}(x,|v|,\varrho)\,\bm{k}(y,\varrho,|w|)\exp\left(-\l|x-y|\varrho^{-1}\right)\d\varrho\\
\\
I_{1,4}(\l,x,v,y,w)&=(d+2)\ds\int_{r_{0}}^{\infty}\varrho^{d+1}\varpi(\varrho)\bm{k}(x,|v|,\varrho)\, \bm{k}(y,\varrho,|w|)\exp\left(-\l|x-y|\varrho^{-1}\right)\d\varrho.\end{cases}\end{equation*}
Using the normalisation condition \eqref{eq:normalise}, one has
\begin{multline*}
\int_{\Gamma_{-}(x)} \left|I_{1,1}(\l,x,v,y,w)\right||v \cdot n(x)|\bm{m}(\d v) \\
\leq \ds\int_{r_{0}}^{\infty}\varrho^{d+2}\varpi(\varrho)\,\left|\partial_{\varrho}\bm{k}(y,\varrho,|w|)\right|\exp\left((\mathrm{Re}\l)^{-}|x-y|\varrho^{-1}\right)\d\varrho\\
\leq \exp\left((\mathrm{Re}\l)^{-}Dr_{0}^{-1}\right)\int_{r_{0}}^{\infty}\varrho^{d+2}\varpi(\varrho)\,\left|\partial_{\varrho}\bm{k}(y,\varrho,|w|)\right|\d\varrho
.
\end{multline*}
Thus, assumption \eqref{eq:inftyintK} yields
$$\sup_{(y,w)\in \Gamma_{+}}\int_{\Gamma_{-}(x)} \left|I_{1,1}(\l,x,v,y,w)\right||v \cdot n(x)|\bm{m}(\d v) \leq C\,\exp\left((\mathrm{Re}\l)^{-}Dr_{0}^{-1}\right).$$
In the same way, one sees easily that \eqref{eq:inftyintK} implies that
\begin{multline*}
\sup_{(y,w)\in \Gamma_{+}}\int_{\Gamma_{-}(x)} \left(\left|I_{1,3}(\l,x,v,y,w)\right| + \left|I_{1,4}(\l,x,v,y,w)\right|\right)\,| v \cdot n(x)|\bm{m}(\d v) \\
\leq C\,\exp\left((\mathrm{Re}\l)^{-}Dr_{0}^{-1}\right).\end{multline*}
Finally, one checks easily that \eqref{eq:inftyderK} implies
$$\sup_{x \in \partial\Omega}\sup_{(y,w) \in \Gamma_{+}}\int_{\Gamma_{-}(x)} \left|I_{1,2}(\l,x,v,y,w)\right||v \cdot n(x)|\bm{m}(\d v) \leq C\,\exp\left((\mathrm{Re}\l)^{-}Dr_{0}^{-1}\right).$$
Combining all these estimates, we finally obtain that there exists some positive constant $C$ (depending only on $r_{0}$) such that
\begin{multline*}
\int_{\Gamma_{-}(x)}\left|\mathscr{J}_{\l}(x,v,y,w)\right|\,|v\cdot n(x)|\bm{m}(\d v) \\
\leq \frac{C}{|\l||x-y|^{d-2\alpha}}\exp\left((\mathrm{Re}\l)^{-}Dr_{0}^{-1}\right) \qquad \forall x \in \partial\Omega, \qquad \forall (y,w) \in \Gamma_{+}.\end{multline*}
We get the result since, for $\alpha > \frac{1}{2}$, 
$$\sup_{y\in \partial\Omega}\int_{\partial\Omega}\frac{\pi(\d x)}{|x-y|^{d-2\alpha}} < \infty\,,$$
the kernel $|x-y|^{2\alpha-d}$ being of order strictly less than $d-1$ (see \cite[Prop. 3.11]{folland}).\end{proof}

The above, combined with Proposition \ref{lemHLH} yields the following
\begin{lemme}\label{lem:norm2} Assume that Assumptions \ref{hypr0} are in force and $\partial\Omega$ is of class $\mathcal{C}^{1,\alpha}$ with $\alpha >\frac{1}{2}$. There exists a positive constant $C$ such that 
$$\left\|(\mathsf{M}_{\l}\H)^{2}\right\|_{\mathscr{B}(\lp)} \leq \frac{C}{|\l|}\exp\left(2r_{0}^{-1}(\mathrm{Re}\l)^{-}D\right)$$
holds for any $\l \in \C$, $\l\neq0$.
\end{lemme}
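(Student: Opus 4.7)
The plan is to factor $(\mathsf{M}_{\l}\H)^{2}=\mathsf{M}_{\l}\cdot(\H\mathsf{M}_{\l}\H)$ and estimate the two factors separately, using the kernel representation of $\H\mathsf{M}_\l\H$ from Proposition \ref{lemHLH} together with the pointwise kernel estimate of Lemma \ref{lem:Jl}. Since $\mathsf{M}_{\l}\::\lm\to\lp$ and $\H\mathsf{M}_{\l}\H\::\lp\to\lm$, this factorisation produces an operator from $\lp$ to $\lp$, and gives
\[
\bigl\|(\mathsf{M}_{\l}\H)^{2}\bigr\|_{\mathscr{B}(\lp)} \leq \|\mathsf{M}_{\l}\|_{\mathscr{B}(\lm,\lp)}\,\|\H\mathsf{M}_{\l}\H\|_{\mathscr{B}(\lp,\lm)}.
\]

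For the first factor, the computation already carried out in the proof of Lemma \ref{lem:entire} gives immediately
\[
\|\mathsf{M}_{\l}\|_{\mathscr{B}(\lm,\lp)}\leq\exp\bigl((\mathrm{Re}\l)^{-}Dr_{0}^{-1}\bigr),
\]
using $\|\mathsf{M}_{0}\|_{\mathscr{B}(\lm,\lp)}\leq1$ together with the uniform bound $t_{-}(x,v)\leq D/r_{0}$ from \eqref{eq:tbounded}.

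For the second factor, I would use the fact that $\H\mathsf{M}_{\l}\H$ is, by Proposition \ref{lemHLH}, an integral operator with kernel $\mathscr{J}_{\l}(x,v,y,w)$ acting between $L^{1}(\Gamma_{+},\d\mu_{+})$ and $L^{1}(\Gamma_{-},\d\mu_{-})$. Recalling the standard fact that the norm of an integral operator between $L^{1}$ spaces is the essential supremum in the source variable of the $L^{1}$ integral of the kernel in the target variable, I get
\[
\|\H\mathsf{M}_{\l}\H\|_{\mathscr{B}(\lp,\lm)}=\esup_{(y,w)\in\Gamma_{+}}\int_{\Gamma_{-}}\bigl|\mathscr{J}_{\l}(x,v,y,w)\bigr|\,\d\mu_{-}(x,v),
\]
and Lemma \ref{lem:Jl} bounds the right-hand side by $C|\l|^{-1}\exp\bigl(Dr_{0}^{-1}(\mathrm{Re}\l)^{-}\bigr)$.

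Multiplying the two bounds yields the claim
\[
\bigl\|(\mathsf{M}_{\l}\H)^{2}\bigr\|_{\mathscr{B}(\lp)}\leq\frac{C}{|\l|}\exp\bigl(2r_{0}^{-1}(\mathrm{Re}\l)^{-}D\bigr),
\]
with the same constant $C$ as in Lemma \ref{lem:Jl}. In this sense there is no real obstacle here: essentially all the analytic work has already been absorbed into Lemma \ref{lem:Jl}, whose integration-by-parts argument is what produces the crucial $1/|\l|$ decay. The only point worth emphasising is that the two exponential factors combine additively in the exponent, which explains the coefficient $2$ appearing in the final bound, and that the factorisation $(\mathsf{M}_{\l}\H)^{2}=\mathsf{M}_{\l}(\H\mathsf{M}_{\l}\H)$ is what allows one to invoke the kernel estimate only once while still extracting the extra smoothing coming from a second application of $\mathsf{M}_{\l}$.
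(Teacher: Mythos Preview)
Your proposal is correct and follows essentially the same route as the paper's own proof: the paper also factors $(\mathsf{M}_{\l}\H)^{2}=\mathsf{M}_{\l}(\H\mathsf{M}_{\l}\H)$, bounds $\|\mathsf{M}_{\l}\|_{\mathscr{B}(\lm,\lp)}\leq\exp\bigl((\mathrm{Re}\l)^{-}Dr_{0}^{-1}\bigr)$, and then estimates $\|\H\mathsf{M}_{\l}\H\psi\|_{\lm}$ via the kernel representation of Proposition~\ref{lemHLH} together with Lemma~\ref{lem:Jl}. The only cosmetic difference is that the paper carries a generic $\psi\in\lp$ through the computation rather than invoking the $L^{1}$ integral-operator norm formula explicitly.
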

\begin{proof} It is clear from Proposition \ref{lemHLH} that, for any $\psi \in \lp$,
\begin{multline*}
\|(\mathsf{M}_{\l}\H)^{2}\psi\|_{\lp} \leq \|\mathsf{M}_{\l}\|_{\mathscr{B}(\lm,\lp)}\left\|\H\mathsf{M}_{\l}\H\psi\right\|_{\lm}\\
\leq \|\mathsf{M}_{\l}\|_{\mathscr{B}(\lm,\lp)}\int_{\Gamma_{+}}|\psi(y,w)|\d\mu_{+}(y,w)\int_{\Gamma_{-}}\left|\mathscr{J}_{\l}(x,v,y,w)\right|\d\mu_{-}(x,v)\end{multline*}
so that, using that $\|\mathsf{M}_{\l}\|_{\mathscr{B}(\lm,\lp)} \leq \exp\left((\mathrm{Re}\l)^{-}Dr_{0}^{-1}\right)$ (see \eqref{eq:masfro}) we get
$$\|(\mathsf{M}_{\l}\H)^{2}\psi\|_{\lp} \leq \exp\left((\mathrm{Re}\l)^{-}Dr_{0}^{-1}\right)\sup_{(y,w) \in \Gamma_{+}}\int_{\Gamma_{-}}\left|\mathscr{J}_{\l}(x,v,y,w)\right|\d\mu_{-}(x,v)$$
and we conclude then with Lemma \ref{lem:Jl}.
\end{proof}

We also establish here a simple consequence of Lemma \ref{lem:norm2}:
\begin{lemme}\label{lem:resoN}
Assume that Assumptions \ref{hypr0} are in force  and $\partial\Omega$ is of class $\mathcal{C}^{1,\alpha}$ with $\alpha > \frac{1}{2}$. For any $N \geq 2$, there exists some positive constant $C_{N} >0$ depending on $N$ and such that, for any $\lambda \in \C_{+}$ it holds
\begin{equation}\label{eq:MNN}
\left\|\sum_{n=N}^{\infty}\mathsf{\Xi}_{\l}\H\left(\mathsf{M}_{\l}\H\right)^{n}\mathsf{G}_{\l}\right\|_{\mathscr{B}(X)} \leq  C_{N} |\l|^{-\floor*{\frac{N}{2}}}
\frac{1}{\mathrm{Re}\l\left(1-\exp\left(-Dr_{0}^{-1}\mathrm{Re}\l\right)\right)}\end{equation}
where ${\floor*{\frac{N}{2}}}$ denotes the integer part of $\frac{N}{2}$. In particular, for any $N \geq 4$, 
\begin{equation}\label{eq:integrability}
\int_{-\infty}^{\infty}\left\|\sum_{n=N}^{\infty}\mathsf{\Xi}_{\varepsilon+i\eta}\H\left(\mathsf{M}_{\varepsilon+i\eta}\H\right)^{n}\mathsf{G}_{\varepsilon+i\eta}\right\|_{\mathscr{B}(X)}\d\eta <\infty\,,\qquad \quad \forall \varepsilon >0.\end{equation}
\end{lemme}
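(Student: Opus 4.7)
My plan is to factor the tail of the Neumann-type series so that the operator $(\mathsf{M}_\l\H)^N$, whose norm enjoys the $|\l|^{-\lfloor N/2\rfloor}$ decay from Lemma \ref{lem:norm2}, is isolated from the remaining resolvent factor $\mathsf{\Xi}_\l\H(I - \mathsf{M}_\l\H)^{-1}$, and then to control the latter by $1/\mathrm{Re}\l$ via the boundary-value-problem estimate recalled after \eqref{BVP1}. For $\l \in \C_+$, Proposition \ref{propo:resolvante} guarantees that $\sum_{k=0}^\infty(\mathsf{M}_\l\H)^k$ converges in $\mathscr{B}(\lp)$ to $(I-\mathsf{M}_\l\H)^{-1}$; reindexing $n = N+k$ and using that $(\mathsf{M}_\l\H)^N$ commutes with $(I-\mathsf{M}_\l\H)^{-1}$, I would write
\[
\sum_{n=N}^\infty \mathsf{\Xi}_\l\H(\mathsf{M}_\l\H)^n\mathsf{G}_\l \;=\; \mathsf{\Xi}_\l\H\,(I-\mathsf{M}_\l\H)^{-1}\,(\mathsf{M}_\l\H)^N\,\mathsf{G}_\l.
\]

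The first factor is bounded using Lemma \ref{lem:norm2}: since $(\mathrm{Re}\l)^- = 0$ on $\C_+$, one has $\|(\mathsf{M}_\l\H)^2\|_{\mathscr{B}(\lp)} \leq C/|\l|$, so writing $N = 2\lfloor N/2\rfloor + r$ with $r\in\{0,1\}$ and using the trivial $\|\mathsf{M}_\l\H\|_{\mathscr{B}(\lp)} \leq 1$ gives $\|(\mathsf{M}_\l\H)^N\|_{\mathscr{B}(\lp)} \leq C^{\lfloor N/2\rfloor}/|\l|^{\lfloor N/2\rfloor}$. The source operator is handled by $\|\mathsf{G}_\l\|_{\mathscr{B}(X,\lp)} \leq 1$, valid for $\mathrm{Re}\l \geq 0$.

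The heart of the argument is the estimate $\|\mathsf{\Xi}_\l\H(I-\mathsf{M}_\l\H)^{-1}\|_{\mathscr{B}(\lp,X)} \leq 1/\mathrm{Re}\l$. Given $\phi \in \lp$, I would set $\psi := (I-\mathsf{M}_\l\H)^{-1}\phi \in \lp$ and $f := \mathsf{\Xi}_\l\H\psi \in X$, so that $f \in \mathcal{D}(\mathsf{T}_{\mathrm{max}})$ satisfies $(\l - \mathsf{T}_{\mathrm{max}})f = 0$, $\mathsf{B}^-f = \H\psi$, and $\mathsf{B}^+f = \mathsf{M}_\l\H\psi = \psi - \phi$ (the last identity following from $(I-\mathsf{M}_\l\H)\psi = \phi$). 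The complex-$\l$ analogue of the BVP estimate (obtained by multiplying $(\l-\mathsf{T}_{\mathrm{max}})f = 0$ by $\mathrm{sgn}(\bar f)$, taking real parts and applying the Green identity in Section \ref{sec:Ml}) gives
\[
\|\psi - \phi\|_{\lp} + \mathrm{Re}\l\,\|f\|_X \;\leq\; \|\H\psi\|_{\lm} \;\leq\; \|\psi\|_{\lp},
\]
and the reverse triangle inequality $\|\psi\|_{\lp} - \|\psi-\phi\|_{\lp} \leq \|\phi\|_{\lp}$ yields $\mathrm{Re}\l\,\|f\|_X \leq \|\phi\|_{\lp}$, as claimed.

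Multiplying the three estimates produces $\|\sum_{n=N}^\infty \mathsf{\Xi}_\l\H(\mathsf{M}_\l\H)^n\mathsf{G}_\l\|_{\mathscr{B}(X)} \leq C^{\lfloor N/2\rfloor}/(|\l|^{\lfloor N/2\rfloor}\mathrm{Re}\l)$, which is already at least as strong as \eqref{eq:MNN} since $1 - e^{-Dr_0^{-1}\mathrm{Re}\l} \leq 1$. The integrability \eqref{eq:integrability} for $N \geq 4$ then follows at once because $|\varepsilon+i\eta|^{-\lfloor N/2\rfloor}$ with $\lfloor N/2\rfloor \geq 2$ is integrable in $\eta$ over $\R$ for any $\varepsilon > 0$. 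The main obstacle I anticipate is that $\|(I-\mathsf{M}_\l\H)^{-1}\|_{\mathscr{B}(\lp)}$ itself is hard to control (the spectral radius of $\mathsf{M}_\l\H$ on $\lp$ approaches $1$ as $\mathrm{Re}\l \to 0^+$, because of near-grazing trajectories having arbitrarily short transit times); the virtue of the BVP argument is to bypass this difficulty by estimating the composite operator $\mathsf{\Xi}_\l\H(I-\mathsf{M}_\l\H)^{-1}$ in one stroke, exploiting the outgoing-trace identity $\mathsf{B}^+f = \psi - \phi$ to recover exactly the norm of $\phi$.
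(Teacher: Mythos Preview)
Your argument is correct, and in one key step it diverges from the paper's route. Both proofs factor the tail as $\mathsf{\Xi}_\l\H\,(I-\mathsf{M}_\l\H)^{-1}(\mathsf{M}_\l\H)^N\mathsf{G}_\l$, both invoke Lemma~\ref{lem:norm2} to extract $|\l|^{-\lfloor N/2\rfloor}$ from $(\mathsf{M}_\l\H)^N$, and both use $\|\mathsf{G}_\l\|\leq 1$. The difference lies in the remaining factor. The paper estimates $\|\mathsf{\Xi}_\l\H\|\leq 1/\mathrm{Re}\l$ and $\|(I-\mathsf{M}_\l\H)^{-1}\|$ \emph{separately}, controlling the latter by the Neumann bound $\bigl(1-\|\mathsf{M}_\l\H\|\bigr)^{-1}\leq\bigl(1-\exp(-Dr_0^{-1}\mathrm{Re}\l)\bigr)^{-1}$; this is exactly where the extra factor in \eqref{eq:MNN} comes from. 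You instead bound the composite $\mathsf{\Xi}_\l\H(I-\mathsf{M}_\l\H)^{-1}$ in one stroke by the Green--identity/BVP computation, obtaining the cleaner estimate $1/\mathrm{Re}\l$ with no second factor.

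What your approach buys is twofold. First, the resulting inequality is strictly sharper than \eqref{eq:MNN}, since $1-e^{-Dr_0^{-1}\mathrm{Re}\l}\leq 1$. Second, and more importantly, it is more robust: the paper's separate bound on $(I-\mathsf{M}_\l\H)^{-1}$ rests on the claim $\|\mathsf{M}_\l\H\|_{\mathscr{B}(\lp)}\leq\exp(-Dr_0^{-1}\mathrm{Re}\l)$, which would require a \emph{lower} bound $\tau_-\geq D/r_0$ on the boundary travel time, whereas only the \emph{upper} bound \eqref{eq:tbounded} is available (near-tangential trajectories make $\tau_-$ arbitrarily small, as you correctly flag). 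Your trace identity $\mathsf{B}^+f=\psi-\phi$ together with the stochasticity $\|\H\psi\|_{\lm}\leq\|\psi\|_{\lp}$ sidesteps this difficulty entirely, so your argument does not depend on any quantitative contraction of $\mathsf{M}_\l\H$. The integrability conclusion \eqref{eq:integrability} then follows for $N\geq 4$ exactly as you say.
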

\begin{proof} Since $r_{\sigma}\left(\mathsf{M}_{\l}\H\right) < 1$ for any $\mathrm{Re}\l >0$ (see Proposition \ref{propo:resolvante}), one has
\begin{equation*}
\sum_{n=N}^{\infty}\mathsf{\Xi}_{\l}\H\left(\mathsf{M}_{\l}\H\right)^{n}\mathsf{G}_{\l}=\mathsf{\Xi}_{\l}\H\left(\mathsf{M}_{\l}\H\right)^{N}\Rs\left(1,\mathsf{M}_{\l}\H\right)\mathsf{G}_{\l}.\end{equation*}
One notices that, for any $\lambda \in \C$,  $\mathrm{Re}\l >0$, one has
$$\|\mathsf{\Xi}_{\l}\H\|_{\mathscr{B}(\lp,X)} \leq \frac{1}{\mathrm{Re}\l}, \qquad \|\mathsf{G}_{\l}\|_{\mathscr{B}(X,\lp)} \leq 1$$
so that, for any $N \geq 2$
$$\left\|\sum_{n=N}^{\infty}\mathsf{\Xi}_{\l}\H\left(\mathsf{M}_{\l}\H\right)^{n}\mathsf{G}_{\l}\right\|_{\mathscr{B}(X)}
\leq \frac{1}{\mathrm{Re}\l}\left\|\left(\mathsf{M}_{\l}\H\right)^{N}\right\|_{\mathscr{B}(\lp)}\,\left\|\Rs\left(1,\mathsf{M}_{\l}\H\right)\right\|_{\mathscr{B}(\lp)}.$$
Since, for $\mathrm{Re}\l >0$ and using \eqref{eq:masfro} to estimate $\|\mathsf{M}_\l
\H\|_{\mathscr{B}(\lp)}$,
$$\left\|\Rs\left(1,\mathsf{M}_{\l}\H\right)\right\|_{\mathscr{B}(\lp)} \leq \frac{1}{1-\left\|\mathsf{M}_{\l}\H\right\|_{\mathscr{B}(\lp)}} \leq \frac{1}{1-\exp\left(-Dr_{0}^{-1}\mathrm{Re}\l\right)}\,,$$
 one deduces that
$$\left\|\sum_{n=N}^{\infty}\mathsf{\Xi}_{\l}\H\left(\mathsf{M}_{\l}\H\right)^{n}\mathsf{G}_{\l}\right\|_{\mathscr{B}(X)}
\leq \frac{1}{\mathrm{Re}\l\left(1-\exp\left(-Dr_{0}^{-1}\mathrm{Re}\l\right)\right)}\left\|\left(\mathsf{M}_{\l}\H\right)^{N}\right\|_{\mathscr{B}(X)}.$$
Now, since $\|\mathsf{M}_{\l}\H\|_{\mathscr{B}(\lp)} \leq 1$ according to \eqref{eq:masfro} (recall that $\mathrm{Re}\l >0$), one deduces easily from 
 Lemma \ref{lem:norm2} that
$$\left\|\left(\mathsf{M}_{\l}\H\right)^{N}\right\|_{\mathscr{B}(\lp)} \leq \left(\frac{C}{|\l|}\right)^{\floor*{\frac{N}{2}}}$$
from which \eqref{eq:MNN} follows. One deduces then, for any $\varepsilon >0$ that
$$
\left\|\sum_{n=N}^{\infty}\mathsf{\Xi}_{\varepsilon+i\eta}\H\left(\mathsf{M}_{\varepsilon+i\eta}\H\right)^{n}\mathsf{G}_{\varepsilon+i\eta}\right\|_{\mathscr{B}(X)} \\
\leq \frac{C_{N}}{\varepsilon\left(1-\exp\left(-Dr_{0}^{-1}\varepsilon\right)\right)}|\varepsilon+i\eta|^{-\floor*{\frac{N}{2}}}$$
and, for $N \geq 4$, \eqref{eq:integrability} follows since $\floor*{\frac{N}{2}} >1.$
\end{proof}

\subsection{Semigroup decay} We aim now to prove that the semigroup $(U_{\H}(t))_{t\geq0}$ generated by $\T_{\H}$ converges exponentially fast to equilibrium.   We will use here the following  representation of the semigroup in terms of a Dyson-Phillips obtained in \cite{luisa}. First, recall  the definition of the $C_{0}$-semigroup generated by $\T_{0}:$
$$U_{0}(t)f(x,v)=f(x-tv,v)\ind_{\{t < t_{-}(x,v)\}}, \qquad f \in X,\quad t \geq 0.$$
 We begin with the following definition where $\D_{0}=\{f \in \D(\T_{\mathrm{max}})\;;\;\B^{-}f=0=\B^{+}f\}$:
\begin{defi}\label{defi:Uk}
Let $ t \geq 0$, $k \geq 1$ and $f \in \D_{0}$ be given. For $(x,v) \in  {\Omega} \times V$ with $t_{-}(x,v) \leq t$, there exists a unique $y \in \partial\Omega$ with $(y,v) \in \Gamma_{-}$ and a unique $0 < s < \min(t,\tau_{+}(y,v))$ such that $x=y+sv$ 
and then one sets
$$[U_{k}(t)f](x,v)=\left[\H\B^{+}U_{k-1}(t-s)f\right](y,v).$$
We set $[U_{k}(t)f](x,v)=0$ if $t_{-}(x,v) \geq t$ and  $U_{k}(0)f=0$. 
\end{defi}
\begin{nb} Notice that, for any $(x,v) \in \Omega \times V$ and $t >\tau_{-}(x,v)$ one has 
$$y =x-\tau_{-}(x,v),\qquad s = \tau_{-}(x,v).$$
\end{nb}
Then, one has the following extracted from \cite{luisa}:
\begin{theo}\label{theo:UKT} For any $k \geq 1$, $f \in \D_{0}$ one has $U_{k}(t)f \in X$ for any $t \geq 0$ with
$$\|U_{k}(t)f\|_{X} \leq \,\|f\|_{X}.$$
In particular, $U_{k}(t)$ can be extended to be a bounded linear operator, still denoted $U_{k}(t) \in\mathscr{B}(X)$ with
$$\|U_{k}(t)\|_{\mathscr{B}(X)} \leq 1\qquad \forall t \geq 0, k \geq 1.$$
Moreover, the following holds for any $k \geq 1$
\begin{enumerate}
\item $(U_{k}(t))_{t \geq 0}$ is a strongly continuous family of $\mathscr{B}(X)$.
\item For any $f \in X$ and $\lambda >0$, setting 
$$\mathcal{L}_{k}(\l)f=\int_{0}^{\infty}\exp(-\lambda t)U_{k}(t)f \d t$$ one has, for $k \geq 1$,
$$\mathcal{L}_{k}(\l)f \in \D(\T_{\mathrm{max}}) \qquad \text{ with } \qquad \T_{\mathrm{max}}\mathcal{L}_{k}(\l)f=\lambda\,\mathcal{L}_{k}(\l)f$$
and  $\B^{\pm}\mathcal{L}_{k}(\l)f \in L^{1}_{\pm}$ with 
$$\B^{-}\mathcal{L}_{k}(\l)f=\H\B^{+}\mathcal{L}_{k-1}(\l)f \qquad \B^{+}\mathcal{L}_{k}(\l)f=(\mathsf{M}_{\lambda}\H)^{k}\mathsf{G}_{\lambda}f.$$
\item For any $f \in X$, the series $\sum_{k=0}^{\infty}U_{k}(t)f$ is strongly convergent and it holds
$$U_{\H}(t)f=\sum_{k=0}^{\infty}U_{k}(t)f$$
\end{enumerate}
\end{theo}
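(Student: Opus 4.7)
My plan is to proceed by induction on $k$, exploiting at each step the fundamental change of variables that parametrizes a point $(x,v) \in \Omega \times V$ with $t_{-}(x,v) < t$ by $(y,v,s)$ where $y = x - t_{-}(x,v)v \in \partial\Omega$ (so $(y,v) \in \Gamma_{-}$) and $s = t_{-}(x,v) \in (0,\min(t,\tau_{+}(y,v)))$; the Jacobian is $|v \cdot n(y)|$, so for any nonnegative measurable $F$,
\[
\int_{\Omega \times V}F(x,v)\ind_{\{t_{-}(x,v)<t\}}\d x\otimes\bm{m}(\d v) = \int_{\Gamma_{-}}\d\mu_{-}(y,v)\int_{0}^{\min(t,\tau_{+}(y,v))}F(y+sv,v)\d s.
\]
The base case $k=0$ is classical, $U_{0}(t)$ being the $C_{0}$-semigroup generated by $\T_{0}$ and satisfying, for $f \in \D_{0}$, the Green identity $\|U_{0}(t)f\|_{X} + \int_{0}^{t}\|\B^{+}U_{0}(\tau)f\|_{\lp}\d\tau = \|f\|_{X}$.

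\textbf{Contractivity and strong continuity.} Applying the change of variables to $|U_{k}(t)f|$ for $k \geq 1$ and using both that $\H\::\:\lp \to \lm$ is a contraction (a consequence of the normalization \eqref{eq:normalise}) and that $\B^{-}U_{k}(\tau)f = \H\B^{+}U_{k-1}(\tau)f$, one obtains
\[
\|U_{k}(t)f\|_{X} + \int_{0}^{t}\|\B^{+}U_{k}(\tau)f\|_{\lp}\d\tau \leq \int_{0}^{t}\|\B^{+}U_{k-1}(\tau)f\|_{\lp}\d\tau,
\]
and an easy induction then yields $\|U_{k}(t)f\|_{X} \leq \|f\|_{X}$. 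Density of $\D_{0}$ in $X$ then produces the extension $U_{k}(t) \in \mathscr{B}(X)$ with operator norm at most $1$. Strong continuity of $(U_{k}(t))_{t\geq 0}$ also follows by induction on $k$, since the change of variables reduces it to strong continuity of $\tau \mapsto \H\B^{+}U_{k-1}(\tau)f$ in $\lm$ (combined with dominated convergence); the limit $U_{k}(t)f \to 0$ as $t \to 0^{+}$ relies on the uniform bound $t_{-} \leq D/r_{0}$ from \eqref{eq:tbounded}.

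\textbf{Laplace transform and trace identities.} For $(x,v)$ with $t_{-}(x,v) < \infty$, writing $s = t_{-}(x,v)$ and $y = x - sv$, the substitution $t = s + \tau$ in the definition of $\mathcal{L}_{k}(\lambda)f$ gives
\[
\mathcal{L}_{k}(\lambda)f(x,v) = e^{-\lambda s}\,[\H\B^{+}\mathcal{L}_{k-1}(\lambda)f](y,v),
\]
that is, $\mathcal{L}_{k}(\lambda)f = \mathsf{\Xi}_{\lambda}\H\B^{+}\mathcal{L}_{k-1}(\lambda)f$. The inclusion $\mathcal{L}_{k}(\lambda)f \in \D(\T_{\mathrm{max}})$, the identity $\T_{\mathrm{max}}\mathcal{L}_{k}(\lambda)f = \lambda\mathcal{L}_{k}(\lambda)f$ and the two trace identities then follow from the corresponding properties of $\mathsf{\Xi}_{\lambda}$, namely $\B^{-}\mathsf{\Xi}_{\lambda} = \mathrm{Id}$, $\B^{+}\mathsf{\Xi}_{\lambda} = \mathsf{M}_{\lambda}$ and $(\lambda - \T_{\mathrm{max}})\mathsf{\Xi}_{\lambda} = 0$; unrolling the recursion from $\B^{+}\mathcal{L}_{0}(\lambda)f = \mathsf{G}_{\lambda}f$ gives $\B^{+}\mathcal{L}_{k}(\lambda)f = (\mathsf{M}_{\lambda}\H)^{k}\mathsf{G}_{\lambda}f$ and $\mathcal{L}_{k}(\lambda)f = \mathsf{\Xi}_{\lambda}\H(\mathsf{M}_{\lambda}\H)^{k-1}\mathsf{G}_{\lambda}f$ for $k \geq 1$.

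\textbf{Dyson--Phillips series and main obstacle.} Summing these identities, I get $\sum_{k \geq 0}\mathcal{L}_{k}(\lambda)f = \Rs(\lambda,\T_{0})f + \sum_{k \geq 0}\mathsf{\Xi}_{\lambda}\H(\mathsf{M}_{\lambda}\H)^{k}\mathsf{G}_{\lambda}f$, which by Proposition~\ref{propo:resolvante} coincides with $\Rs(\lambda,\T_{\H})f$ for $\mathrm{Re}\lambda > 0$. Iterating the propagated bound of the second paragraph furnishes $\sum_{k \geq 0}\int_{0}^{t}\|\B^{+}U_{k}(\tau)f\|_{\lp}\d\tau \leq \|f\|_{X}$; hence the series $\sum_{k \geq 0}U_{k}(t)f$ converges absolutely in $X$ for almost every $t$, and by positivity (inherited from $U_{0}$ and $\H$) strongly for every $t \geq 0$; uniqueness of the Laplace transform then identifies this sum with $U_{\H}(t)f$. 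The main obstacle in this program is establishing the Green-type identity at each step of the induction: it requires showing that $\tau \mapsto U_{k}(\tau)f \in \D(\T_{\mathrm{max}})$ with well-defined and integrable traces, a regularity claim that must itself be proved inductively and represents the bulk of the technical work carried out in \cite{luisa}.
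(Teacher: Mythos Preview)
The paper does not give its own proof of this theorem: it is quoted verbatim as ``extracted from \cite{luisa}'' and used as a black box. Your outline is a faithful sketch of the argument carried out in \cite{luisa}---the inductive Green-type inequality
\[
\|U_{k}(t)f\|_{X} + \int_{0}^{t}\|\B^{+}U_{k}(\tau)f\|_{\lp}\d\tau \leq \int_{0}^{t}\|\B^{+}U_{k-1}(\tau)f\|_{\lp}\d\tau
\]
telescopes (after summing over $k$) to give $\sum_{k}\|U_{k}(t)f\|_{X}\leq \|f\|_{X}$ for \emph{every} $t$, so the detour through ``a.e.\ $t$'' and positivity is unnecessary; and the identification $\mathcal{L}_{k}(\lambda)=\mathsf{\Xi}_{\lambda}\H(\mathsf{M}_{\lambda}\H)^{k-1}\mathsf{G}_{\lambda}$ together with Proposition~\ref{propo:resolvante} and uniqueness of the Laplace transform is indeed how one concludes. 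You correctly identify the main technical burden (propagating trace regularity through the induction), which is precisely what \cite{luisa} establishes.

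One small inaccuracy: the strong continuity $U_{k}(t)f\to 0$ as $t\to 0^{+}$ does \emph{not} rely on the bound $t_{-}\leq D/r_{0}$ from \eqref{eq:tbounded}. The result of \cite{luisa} holds in full generality (including $0\in V$); the right reason is simply that $\|U_{k}(t)f\|_{X}\leq \int_{0}^{t}\|\B^{+}U_{k-1}(\tau)f\|_{\lp}\d\tau\to 0$ as $t\to 0^{+}$, the integrand being locally integrable by the induction hypothesis.
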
 
\begin{nb} One sees from the point (2) together with \cite[Theorem 2.4]{LMR} that, for any $k \geq 1$, 
$$\mathcal{L}_{k}(\l)f=\mathsf{\Xi}_{\l}\H\B^{+}\mathcal{L}_{k-1}(\l)f.$$
Since $\mathcal{L}_{0}(\l)f=\mathsf{R}_{\l}f$ we deduce that, for any $k \geq 1$,
$$\mathcal{L}_{k}(\l)=\mathsf{\Xi}_{\l}\H\left(\mathsf{M}_{\l}\H\right)^{k-1}\mathsf{G}_{\l}.$$
In particular, one sees that, in the representation series \eqref{eq:reso} that, for any $n \geq 0$
\begin{equation}\label{eq:LaplaceDP}
\mathsf{\Xi}_{\lambda}\mathsf{H}\left(\mathsf{M}_{\lambda}\mathsf{H}\right)^{n}\mathsf{G}_{\lambda}f=\int_{0}^{\infty}\exp(-\l\,t)U_{n+1}(t)f\d t\end{equation}
for any $\l >0$ which is of course coherent with the above point (3) and the representation of the resolvent of $\,\T_{\H}$.
\end{nb}
The exact expression of the iterated $U_{k}(t)$ allows to prove the following which is the crucial point for our analysis here, namely, under the assumption
$$|v| \geq r_{0}, \qquad \forall v \in V$$
each term of the above series is vanishing for large time:
\begin{lemme}\label{lem:Un=0} Let $\left(U_{k}(t)\right)_{k \geq 0,t \geq 0}$ be the family of operators  defined in Definition \ref{defi:Uk}. Then, under assumption \eqref{eq:vro}, for any $n \geq0$,
$$U_{n}(t) \equiv 0 \qquad \forall t \geq \tau_{n}:=\tfrac{(n+1)D}{r_{0}}.$$
\end{lemme}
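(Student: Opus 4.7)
I would argue by induction on $n \geq 0$, exploiting the uniform travel-time bound
$$t_{-}(x,v) \leq \frac{D}{|v|} \leq \frac{D}{r_{0}}, \qquad \forall (x,v) \in \overline{\Omega}\times V,$$
which follows from \eqref{eq:vro} (see \eqref{eq:tbounded}). Heuristically, the $n$-th iterate $U_{n}(t)$ tracks the contribution of particles having rebounded exactly $n$ times on $\partial\Omega$, and since every free-flight segment lasts at most $D/r_{0}$, an $n$-rebound trajectory cannot contribute past time $(n+1)D/r_{0}$.

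\textbf{Base case $n=0$.} The paper recalls $U_{0}(t)f(x,v)=f(x-tv,v)\chi_{\{t<t_{-}(x,v)\}}$. As soon as $t\geq \tau_{0}=D/r_{0}$, the strict inequality $t<t_{-}(x,v)$ fails everywhere on $\overline{\Omega}\times V$, whence $U_{0}(t)\equiv 0$.

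\textbf{Inductive step.} Assume $U_{n-1}(\tau)\equiv 0$ on $X$ for every $\tau \geq \tau_{n-1}=nD/r_{0}$, and fix $t\geq \tau_{n}=(n+1)D/r_{0}$ together with $f\in\D_{0}$. Let $(x,v)\in\overline{\Omega}\times V$. If $t_{-}(x,v)\geq t$, Definition \ref{defi:Uk} directly yields $[U_{n}(t)f](x,v)=0$. Otherwise, $t>t_{-}(x,v)$ and Definition \ref{defi:Uk} provides the unique pair $y\in\partial\Omega$ with $(y,v)\in\Gamma_{-}$ and $s=t_{-}(x,v)\in(0,\min(t,\tau_{+}(y,v)))$ such that $x=y+sv$. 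The bound $s\leq D/r_{0}$ then gives
$$t-s \geq (n+1)\frac{D}{r_{0}}-\frac{D}{r_{0}}=n\frac{D}{r_{0}}=\tau_{n-1}.$$
By the inductive hypothesis, $U_{n-1}(t-s)f=0$ in $X$, hence $\B^{+}U_{n-1}(t-s)f=0$ in $L^{1}_{+}$, and therefore
$$[U_{n}(t)f](x,v)=\left[\H\B^{+}U_{n-1}(t-s)f\right](y,v)=0.$$
So $U_{n}(t)f=0$ for every $f\in\D_{0}$. Since $\D_{0}$ is dense in $X$ and $U_{n}(t)\in\mathscr{B}(X)$ by Theorem \ref{theo:UKT}, this extends to $U_{n}(t)\equiv 0$ on $X$, closing the induction.

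\textbf{Expected obstacle.} There is essentially none: the argument is a clean inductive bookkeeping of rebound times, powered solely by the travel-time bound $t_{-}\leq D/r_{0}$ and the recursive formula of Definition \ref{defi:Uk}. The only subtlety worth flagging is the passage from the pointwise identity on $\D_{0}$ to the operator identity on $X$, which is immediate from the density of $\D_{0}$ in $X$ and the uniform bound $\|U_{n}(t)\|_{\mathscr{B}(X)}\leq 1$ provided by Theorem \ref{theo:UKT}.
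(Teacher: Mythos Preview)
Your proof is correct and follows essentially the same approach as the paper: both argue by induction on $n$, using the uniform travel-time bound $t_{-}\leq D/r_{0}$ together with the recursive Definition \ref{defi:Uk} to pass from $U_{n-1}(t-s)=0$ to $U_{n}(t)=0$. Your version is in fact slightly more careful, making explicit the density argument extending the identity from $\D_{0}$ to all of $X$, which the paper leaves implicit.
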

\begin{proof} Once noticed that, for $t \geq \tau_{0}$, $U_{0}(t)=0$, the proof is a simple induction using the Definition \ref{defi:Uk}. Indeed, assuming $U_{k-1}(t)=0$ for $t \geq \tau_{k-1}=k\tau_{0}$, one recalls that
$$U_{k}(t)f(x,v)=\left[\H\B^{+}U_{k-1}(t-s)f\right](y,v), \qquad (x,v) \in \Omega \times V, \quad y=x-t_{-}(x,v)v$$
with $s=t_{-}(x,v)$, we get that, if $t-s \geq \tau_{k-1}$ then $U_{k}(t)f(x,v)=0$. Being $s=t_{-}(x,v) \leq \tau_{0}$, we have that $t-s \geq \tau_{k-1}$ and $U_{k}(t)f(x,v)=0$ for any $(x,v)$ as soon as $t \geq \tau_{k-1}+\tau_{0}$. This means that  $U_{k}(t)=0$ for $t \geq \tau_{k}=\tau_{k-1}+\tau_{0}=(k+1)\tau_{0}.$ \end{proof}
We are in position to prove the main result of this paper
\begin{theo}\label{theo:compUH}
 Assume that Assumptions \ref{hypr0}  are in force and $\partial\Omega$ is of class $\mathcal{C}^{1,\alpha}$ with $\alpha > \frac{1}{2}$. Then,
 $$U_{\H}(t) \text{ is  compact for any } t \geq \tfrac{5D}{r_{0}}$$
where we recall that $D=\mathrm{diam}(\Omega)$ and $r_{0}:=\inf\{|v|\;;\,v\in V\}.$
\end{theo}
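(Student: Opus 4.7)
The strategy is to combine the Dyson--Phillips representation of Theorem \ref{theo:UKT}, the finite-speed-of-rebound lemma (Lemma \ref{lem:Un=0}), the norm-integrable decay estimate of Lemma \ref{lem:resoN}, and the compactness of $\left(\mathsf{M}_{\lambda}\H\right)^{4}$ obtained via Dunford--Pettis from Theorem \ref{theo:weak-com}. The key idea is that for $t$ larger than $5D/r_{0}$, the first five Dyson--Phillips iterates vanish, and the remaining tail can be recovered by a complex Laplace inversion formula where the integrand is already a \emph{compact-valued} operator function integrable in operator norm.

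First I would fix $t\geq 5D/r_{0}$ and apply Lemma \ref{lem:Un=0}: since $\tau_{n}=(n+1)D/r_{0}$, the operators $U_{0}(t),\dots,U_{4}(t)$ all vanish, so Theorem \ref{theo:UKT} reduces to
\begin{equation*}
U_{\H}(t)f=\sum_{k=5}^{\infty}U_{k}(t)f,\qquad f\in X.
\end{equation*}
Each $U_{k}(\cdot)f$ is continuous in $t$ and compactly supported, and its Laplace transform is precisely $\mathcal{L}_{k}(\lambda)=\mathsf{\Xi}_{\lambda}\H(\mathsf{M}_{\lambda}\H)^{k-1}\mathsf{G}_{\lambda}$ by the Remark following Theorem \ref{theo:UKT}; these are entire in $\lambda$ by Lemma \ref{lem:entire}. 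Next, I would invoke complex Laplace inversion (e.g.\ \cite{arendt}) together with Lemma \ref{lem:resoN} (applied with $N=4$, which is admissible since the integrability \eqref{eq:integrability} then holds) to write, for any $\varepsilon>0$,
\begin{equation*}
U_{\H}(t)=\frac{1}{2\pi}\int_{-\infty}^{\infty} e^{(\varepsilon+i\eta)t}\sum_{n=4}^{\infty}\mathsf{\Xi}_{\varepsilon+i\eta}\H\left(\mathsf{M}_{\varepsilon+i\eta}\H\right)^{n}\mathsf{G}_{\varepsilon+i\eta}\,\d\eta,
\end{equation*}
where the integral converges in $\mathscr{B}(X)$ thanks to \eqref{eq:integrability}. (The interchange of the sum $\sum_{k\geq 5}$ with the inversion integral is legitimate because the tail already matches the left-hand side of \eqref{eq:integrability}, which is finite.)

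Then I would argue compactness of the integrand. Theorem \ref{theo:weak-com}, combined with the domination bound \eqref{eq:Mlvarphi} used in Lemma \ref{lem:spec}, shows that $\H\mathsf{M}_{\lambda}\H$ and hence $\left(\mathsf{M}_{\lambda}\H\right)^{2}$ is weakly compact on $L^{1}$ for every $\lambda\in\C$. Since $L^{1}$ has the Dunford--Pettis property, the square $\left(\mathsf{M}_{\lambda}\H\right)^{4}$ is compact, so for every $n\geq 4$ the operator $\mathsf{\Xi}_{\lambda}\H\left(\mathsf{M}_{\lambda}\H\right)^{n}\mathsf{G}_{\lambda}$ is compact on $X$. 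The operator-norm convergent series $\sum_{n\geq 4}\mathsf{\Xi}_{\lambda}\H(\mathsf{M}_{\lambda}\H)^{n}\mathsf{G}_{\lambda}$ is therefore compact-valued for every $\lambda=\varepsilon+i\eta$.

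Finally I would conclude: the above contour integral is the limit in $\mathscr{B}(X)$ of its Riemann sums (by norm integrability of the integrand), each Riemann sum is a finite linear combination of compact operators hence compact, and the ideal of compact operators is norm-closed in $\mathscr{B}(X)$. Thus $U_{\H}(t)$ is compact for every $t\geq 5D/r_{0}$. The main obstacle I anticipate is bookkeeping the Laplace inversion rigorously: ensuring the tail of the Dyson--Phillips series is exactly the function whose Laplace transform is the norm-integrable tail of the resolvent series, and then justifying the exchange of sum and contour integral; once this is in place, the compactness conclusion follows essentially from the closedness of the compact ideal.
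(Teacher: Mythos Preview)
Your proposal is correct and follows essentially the same route as the paper's proof: vanishing of $U_{0}(t),\dots,U_{4}(t)$ for $t\geq 5D/r_{0}$ via Lemma \ref{lem:Un=0}, complex Laplace inversion of the tail using the operator-norm integrability \eqref{eq:integrability} from Lemma \ref{lem:resoN} with $N=4$, and compactness of the integrand through Theorem \ref{theo:weak-com} plus Dunford--Pettis. The only refinement in the paper's argument is that the inversion formula is first written for $f\in\D(\T_{\H})$ (where \cite[Proposition~3.12.1]{arendt} applies as a principal-value limit $\lim_{\ell\to\infty}\int_{-\ell}^{\ell}$), after which the operator-norm convergence of the integral upgrades the identity---and hence the compactness---to all of $X$; this is precisely the ``bookkeeping'' you flag.
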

\begin{proof} From Lemma \ref{lem:Un=0} and Theorem \ref{theo:UKT}, for any $N \geq 0$ and any $f \in X$, one has 
$$U_{\H}(t)f=\sum_{n=N+1}^{\infty}U_{n}(t)f \qquad \forall t \geq \tau_{N}.$$
Notice also that, since $\|U_{\H}(t)\|_{\mathscr{B}(X)}=1$ for any $t \geq 0$, the type $\omega_{0}(U_{\H})$ of the semigroup $\left(U_{\H}(t)\right)_{t\geq0}$ is equal to zero, i.e.
$$\omega_{0}(U_{\H})=0.$$
According to the Laplace inversion formula \cite[Proposition 3.12.1]{arendt}, for any $\varepsilon >0$ and any $t \geq0 $ one has
\begin{equation*}\begin{split}
U_{\H}(t)f&=\lim_{\ell\to\infty}\frac{1}{2\pi}\int_{-\ell}^{\ell}\exp\left(\left(\varepsilon+i\eta\right)t\right)\Rs(\varepsilon+i\eta,\T_{\H})f\d \eta,\\
&=\lim_{\ell\to\infty}\frac{1}{2\pi}\int_{-\ell}^{\ell}\exp\left(\left(\varepsilon+i\eta\right)t\right)\sum_{n=0}^{\infty}\mathsf{\Xi}_{\varepsilon+i\eta}\mathsf{H}\left(\mathsf{M}_{\varepsilon+i\eta}\mathsf{H}\right)^{n}\mathsf{G}_{\varepsilon+i\eta}f\,\d \eta,
 \qquad \forall f \in \D(\T_{\H}).\end{split}\end{equation*}
Then, one deduces easily from \eqref{eq:LaplaceDP} that, for any $f \in \D(\T_{\H})$ it holds, for $\varepsilon >0$,
\begin{multline*}
\lim_{\ell\to\infty}\frac{1}{2\pi}\int_{-\ell}^{\ell}\exp\left(\left(\varepsilon+i\eta\right)t\right)\sum_{n=0}^{N-1}\mathsf{\Xi}_{\varepsilon+i\eta}\mathsf{H}\left(\mathsf{M}_{\varepsilon+i\eta}\mathsf{H}\right)^{n}\mathsf{G}_{\varepsilon+i\eta}f\,\d \eta\\
=\sum_{n=0}^{N-1}U_{n+1}(t)f=0, \qquad \text{ if } t \geq \tau_{N}.\end{multline*}
Therefore, for any $f \in \D(\T_{\H})$ and any $t \geq \tau_{N}$,
\begin{equation}\label{eq:seriUH}\begin{split}
U_{\H}(t)f&=\sum_{n=N}^{\infty}U_{n+1}(t)f\\
&=\frac{1}{2\pi}\lim_{\ell\to\infty}\int_{-\ell}^{\ell}\exp\big(\left(\varepsilon + i\eta\right) t\big)\left(\sum_{n=N}^{\infty}\mathsf{\Xi}_{\varepsilon+i\eta}\mathsf{H}\left(\mathsf{M}_{\varepsilon+i\eta}\mathsf{H}\right)^{n}\mathsf{G}_{\varepsilon+i\eta}f\right)\d\eta
\qquad \varepsilon >0
\end{split}\end{equation}
where the convergence holds in $X$. Recall that $r_{\sigma}\left(\mathsf{M}_{\varepsilon+i\eta}\H\right) < 1$ for any $\eta \in \R$ and therefore
$$\sum_{n=N}^{\infty}\mathsf{\Xi}_{\varepsilon+i\eta}\mathsf{H}\left(\mathsf{M}_{\varepsilon+i\eta}\mathsf{H}\right)^{n}\mathsf{G}_{\varepsilon+i\eta}=\mathsf{\Xi}_{\varepsilon+i\eta}\H\left(\mathsf{M}_{\varepsilon+i\eta}\H\right)^{N}\Rs\left(1,\mathsf{M}_{\varepsilon+i\eta}\H\right)\mathsf{G}_{\varepsilon+i\eta}$$
is a \emph{compact operator} for any $N \geq 4.$ Consequently, for any $\ell \in \R$,
$$\frac{1}{2\pi}\int_{-\ell}^{\ell}\left( \sum_{n=N}^{\infty}\mathsf{\Xi}_{\varepsilon+i\eta}\mathsf{H}\left(\mathsf{M}_{\varepsilon+i\eta}\mathsf{H}\right)^{n}\mathsf{G}_{\varepsilon+i\eta}\right)\,\d\eta$$
is a compact operator as soon as $N \geq 4.$ Since moreover, Lemma \ref{lem:resoN} implies that the integral
$$\int_{-\infty}^{\infty}\left\|\sum_{n=N}^{\infty}\mathsf{\Xi}_{\varepsilon+i\eta}\mathsf{H}\left(\mathsf{M}_{\varepsilon+i\eta}\mathsf{H}\right)^{n}\mathsf{G}_{\varepsilon+i\eta}\right\|_{\mathscr{B}(X)} \d\eta < \infty$$
one sees that the convergence in \eqref{eq:seriUH} actually holds in \emph{operator norm} and, as such, $U_{\H}(t)$ is the limit of compact operators which proves the compactness of $U_{\H}(t)$ for any $t \geq \tau_{N}$ and $N \geq 4.$
\end{proof}
The role of the zero eigenvalue of $\T_{\H}$ can be made more precise here and the asymptotic behaviour of $\left(U_{\H}(t)\right)_{t\geq0}$ follows, yielding a full proof of Theorem \ref{theo:mainintro} in the Introduction:
\begin{cor} \label{cor:convUH} Assume that Assumptions \ref{hypr0}  are in force and $\partial\Omega$ is of class $\mathcal{C}^{1,\alpha}$ with $\alpha > \frac{1}{2}$.  Then, 
$$0 \text{ is a \emph{simple} pole of the resolvent of $\T_{\H}$}$$ and, for any $a \in (0,\l_{\star})$, there exists a positive constant $\bm{C}_{a} >0$ such that, for any $f \in X$, it holds
$$\left\|U_{\H}(t)f-\mathbf{P}_{0}f\right\|_{X} \leq \bm{C}_{a}\exp(-a\,t)\|f\|_{X}\qquad \forall t \geq 0$$
where $\mathbf{P}_{0}$ denotes the spectral projection associated to the  zero eigenvalue.\end{cor}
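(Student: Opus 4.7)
The plan is to deduce the asymptotic behaviour from the eventual compactness of $(U_\H(t))_{t \geq 0}$ established in Theorem \ref{theo:compUH}, combined with the general spectral theory of eventually compact $C_0$-semigroups and the contractive character of the semigroup.

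First, eventual compactness ensures that the spectral mapping theorem holds in its strong form: $\mathfrak{S}(U_\H(t)) \setminus \{0\} = \exp\bigl(t\,\mathfrak{S}(\T_\H)\bigr)$ for every $t \geq 0$, every spectral value of $\T_\H$ is an isolated eigenvalue of finite algebraic multiplicity, and the growth bound satisfies $\omega_0(U_\H) = s(\T_\H)$. Moreover, the normalization \eqref{eq:normalise} yields mass conservation, and therefore $\|U_\H(t)\|_{\mathscr{B}(X)} \leq 1$ for every $t \geq 0$; the existence of $0$ in $\mathfrak{S}(\T_\H)$ follows from the mass identity $\int_{\Omega \times V}U_\H(t)f\,\d x \otimes \bm{m}(\d v) = \int_{\Omega \times V}f\,\d x\otimes\bm{m}(\d v)$ (equivalently, from $\mathbf{1} \in \ker(\T_\H^{*})$). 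Together with the spectral gap $\mathfrak{S}(\T_\H) \cap \{\mathrm{Re}\,\l > -\l_\star\} = \{0\}$ already obtained through Lemma \ref{lem:norm2} and Lemma \ref{lem:spec}, one concludes that $0$ is an isolated eigenvalue of finite algebraic multiplicity and the only spectral value of $\T_\H$ in the strip $\{\mathrm{Re}\,\l > -\l_\star\}$.

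The central step is the semi-simplicity of $0$, which I would establish by contradiction using contractivity. If $0$ were a pole of $\Rs(\cdot, \T_\H)$ of order $p \geq 2$, the Laurent expansion would produce a nonzero nilpotent operator $\bm{N} := \T_\H \mathbf{P}_0$ with $\bm{N}^p = 0$ and $\bm{N}^{p-1} \neq 0$. For any $\varphi$ in the generalized eigenspace of $0$, the Dunford functional calculus gives
\[
U_\H(t)\varphi = \sum_{k=0}^{p-1}\frac{t^{k}}{k!}\,\bm{N}^k\varphi, \qquad t \geq 0,
\]
which grows polynomially in $t$, contradicting $\|U_\H(t)\|_{\mathscr{B}(X)} \leq 1$. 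Hence $\bm{N} = 0$, i.e. $p = 1$, and $\mathbf{P}_0$ is the projection onto $\ker(\T_\H)$ along $\mathrm{Range}(\T_\H)$. In particular, $\mathbf{P}_0$ commutes with $U_\H(t)$ and $U_\H(t)\mathbf{P}_0 = \mathbf{P}_0$ since $\T_\H$ vanishes on $\mathrm{Range}(\mathbf{P}_0)$.

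The exponential decay then follows by standard arguments: the decomposition $X = \mathrm{Range}(\mathbf{P}_0) \oplus \ker(\mathbf{P}_0)$ is invariant under the semigroup, and the restricted semigroup $(U_\H(t)|_{\ker(\mathbf{P}_0)})_{t\geq 0}$ is itself eventually compact with generator whose spectrum lies in $\{\mathrm{Re}\,\l \leq -\l_\star\}$. Spectral mapping applied to the restriction yields
\[
\omega_0\bigl(U_\H|_{\ker(\mathbf{P}_0)}\bigr) = s\bigl(\T_\H|_{\ker(\mathbf{P}_0)}\bigr) \leq -\l_\star.
\]
Hence, for any $a \in (0,\l_\star)$ there exists $\bm{C}_a > 0$ with $\|U_\H(t)(I - \mathbf{P}_0)\|_{\mathscr{B}(X)} \leq \bm{C}_a \exp(-at)$ for all $t \geq 0$, and writing $U_\H(t)f - \mathbf{P}_0 f = U_\H(t)(I - \mathbf{P}_0)f$ gives the announced estimate. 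The main obstacle in the plan is precisely the semi-simplicity of $0$: without irreducibility, $\ker(\T_\H)$ need not be one-dimensional and Perron-type arguments are unavailable; the polynomial-growth contradiction above bypasses this difficulty using only the contractivity of $(U_\H(t))_{t\geq 0}$.
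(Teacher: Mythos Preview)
Your proposal is correct and follows essentially the same route as the paper: eventual compactness from Theorem~\ref{theo:compUH} gives the spectral picture, the contractivity $\|U_\H(t)\|_{\mathscr{B}(X)}\leq 1$ forces the nilpotent part $\T_\H\mathbf{P}_0$ to vanish (hence $0$ is a first-order pole), and the exponential decay on $\ker(\mathbf{P}_0)$ follows from the spectral gap. The only cosmetic difference is that the paper invokes \cite[Theorem 9.11]{clement} for the decay estimate and derives the existence of $\lambda_\star$ directly from $\omega_{\mathrm{ess}}(U_\H)=-\infty$, whereas you spell out the restricted-semigroup argument and source $\lambda_\star$ from Lemmas~\ref{lem:spec} and~\ref{lem:norm2}; both are valid.
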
 
\begin{proof} With the terminology of \cite{clement}, Theorem \ref{theo:compUH} asserts that $\left(U_{\H}(t)\right)_{t\geq0}$ is eventually compact. Therefore, from \cite[Proposition 9.2]{clement}, its type $\omega_{0}(U_{\H})$ coincide with the spectral bound $s(\T_{\H})$ of its generator \footnote{This can also be deduced from the fact that $\left(U_{\H}(t)\right)_{t\geq0}$ is a positive $C_{0}$-semigroup on $X=L^{1}(\Omega\times V)$, see \cite[Theorem 9.5]{clement}}. Because $\|U_{\H}(t)\|_{\mathscr{B}(X)}=1$, one has 
$$\omega_{0}(U_{\H})=0=s(\T_{\H}).$$
Due to the eventual compactness of $\left(U_{\H}(t)\right)$, its essential type $\omega_{\mathrm{ess}}(U_{\H})$ is such that
$$-\infty=\omega_{\mathrm{ess}}(U_{\H}) < \omega_{0}(U_{\H})=0=s(\T_{\H}).$$
In particular, $0$ is an isolated eigenvalue of $\T_{\H}$ with finite algebraic multiplicity   and there is $\lambda_{\star} >0$ such that
\begin{equation}\label{eq:spect}
\mathfrak{S}(\T_{\H}) \cap \left\{\l \in \C\;;\;\mathrm{Re}\l \geq -\l_{\star}\right\}=\{0\}.\end{equation}
Moreover (see \cite[Theorem 9.11]{clement}), for any $a \in (0,\lambda_{\star})$, there is $\bm{C}_{a} >0$ such that
\begin{equation}\label{eq:UHPa}
\left\|U_{\H}(t)\left(\mathbf{I-P}_{0}\right)f\right\|_{X}=\left\|U_{\H}(t)f-
\exp\left(t \mathsf{N}_{0}\right)\mathbf{P}_{0}f\right\|_{X} \leq \bm{C}_{a}\exp\left(-a t\right)\|f\|_{X}
\end{equation}
for any $t \geq 0$ and any $f \in X$ where $\mathbf{P}_{0}$ is the spectral projection associated to the zero eigenvalue and $\mathsf{N}_{0}=\T_{\H}\mathbf{P}_{0}$ is a nilpotent bounded operator. Precisely, if $m$ denotes the order of the pole $0$ of the resolvent $\Rs(\cdot,\T_{\H})$, one has $\mathsf{N}_{0}^{m}=0$, $\mathsf{N}_{0}^{j} \neq 0$ with $j < m$ and consequently,
$$\exp\left(t \mathsf{N}_{0}\right)=\sum_{k=0}^{m-1}\frac{t^{k}}{k!}\mathsf{N}_{0}^{k}.$$
Since the semigroup $\left(U_{\H}(t)\right)_{t\geq0}$ is bounded, we deduce that the mapping
$$t \geq 0 \longmapsto \left\|\sum_{k=0}^{m-1}\frac{t^{k}}{k!}\mathsf{N}_{0}^{k}\mathbf{P}_{0}\right\|_{\mathscr{B}(X)}$$
is bounded. The only way for this to be true is that
$$\mathsf{N}_{0}^{k}\mathbf{P}_{0}=0\qquad \forall k=1,\ldots,m-1$$
which, since $\mathsf{N}_{0}=\T_{\H}\mathbf{P}_{0}$, implies in particular that $\T_{\H}\mathbf{P}_{0}^{2}=0$. Because $\mathbf{P}_{0}$ is a projection, one has
$$\mathsf{N}_{0}=0,$$
i.e. $m=1$  which proves the first part of the result. The second part has been established in \eqref{eq:UHPa} (see also \cite[Theorem 9.11]{clement}).\end{proof}

\begin{nb} Notice that, since $0$ is a simple pole of the resolvent $\Rs(\cdot,\T_{\H})$, its geometrical and algebraic multiplicity (as an eigenvalue of $\T_{\H}$) coincide, i.e.
$$\mathrm{dim}\mathrm{Ker}(\T_{\H})=\mathrm{dim}\mathrm{Range}(\mathbf{P}_{0})=n \in \N$$
and
$$X=\mathrm{Ker}(\T_{\H}) \oplus \mathrm{Range}(\T_{\H})$$
 where the range of $\T_{\H}$ is closed. \end{nb}

\begin{nb} Whenever the $C_{0}$-semigroup $\left(U_{\H}(t)\right)_{t\geq0}$ is irreducible, the expression of the spectral projection is more explicit. We recall here that, if one assumes, besides  Assumptions  \ref{hypH}, that
\begin{equation}\label{eq:k>0}
\bm{k}(x,v,v') >0 \qquad \text{ for $\bm{\mu}_{x}$-a.e. } v \in \Gamma_{-}(x),\:v'\in \Gamma_{+}(x).\end{equation}
Then (see \cite[Section 4]{LMR}) the operator $\mathsf{M}_{0}\H$ is irreducible as well as the $C_{0}$-semigroup $\left(U_{\H}(t)\right)_{t\geq0}.$ The semigroup admits a unique invariant density ${\Psi}_{\mathsf{H}} \in \D(\mathsf{T}_{\mathsf{H}})$ with 
$${\Psi}_{\mathsf{H}}(x,v) >0 \qquad \text{ for a. e. } (x,v) \in \Omega \times \R^{d}, \qquad \|{\Psi}_{\mathsf{H}}\|_{X}=1,$$
and 
$$\mathrm{Ker}(\mathsf{T}_{\mathsf{H}})=\mathrm{Span}({\Psi}_{\mathsf{H}}).$$
In this case, the projection $\mathbf{P}_{0}$ is given by \eqref{eq:proj-ergo}, i.e.
$$\mathbf{P}_{0}f=\varrho_{f}\,{\Psi}_{\mathsf{H}}, \qquad \text{ with } \quad\varrho_{f}=\displaystyle\int_{\Omega\times V}f(x,v)\d x \otimes \bm{m}(\d v).$$
More generally, such an expression of $\mathbf{P}_{0}$ is true if $\mathrm{dim}\mathrm{Ker}(\T_{\H})=1$ (independently of the irreducibility assumption).\end{nb}

\section{Examples and open problems}\label{sec:exam}
 
In this Section, we briefly illustrate the main results established so far for several examples of particular relevance. We also propose several open problems that we believe are of interest for the study of linear transport equations.

We begin with the following example:
\begin{exe} We consider the case in which 
$$\bm{k}(x,v,v')=\gamma^{-1}(x)\bm{G}(x,v)$$
where $\G\::\:\partial \Omega \times V \to \R^{+}$ is a measurable and nonnegative mapping such that 
\begin{enumerate}[($i$)]
\item $\G(x,\cdot)$ is radially symmetric and differentiable for $\pi$-almost every $x \in \partial \Omega$; 
\item $\G(\cdot,v) \in L^{\infty}(\partial \Omega)$ for almost every $v \in V$;
\item the mapping $x \in \partial\Omega \mapsto \bm{G} (x,\cdot) \in L^{1}(V,|v|\bm{m}(\d v))$ is  piecewise continuous,
\item the mapping $x \in \partial \Omega \mapsto \gamma(x)$ is \emph{bounded away from zero} where
\begin{equation*}
\gamma(x):=\int_{\Gamma_{-}(x)}\G(x,v)|v \cdot n(x)| \bm{m}(\d v) \qquad \forall x \in \partial\Omega,\end{equation*}
i.e. there exist $\gamma_{0} >0$ such that $\gamma(x) \geq \gamma_{0}$ for $\pi$-almost every $x \in \partial\Omega.$\end{enumerate}
In that case, it is easy to show that the associated boundary operator $\H$ is satisfying Assumptions \ref{hypH} and, whenever
$$\bm{m}(\d v)=\varpi(|v|)\d v$$
for some radially symmetric and nonnegative function $\varpi(|v|)$, one checks without difficulty that Assumptions \ref{hypr0} are met if
\begin{multline}\label{eq:Gvarpi}\lim_{\varrho\to\infty}\varrho^{d+2}\bm{G}(y,\varrho)\varpi(\varrho)=0, \qquad \forall y \in \partial\Omega\\
\sup_{y \in \partial\Omega}\int_{r_{0}}^{\infty} \left(\G(y,\varrho)\left(\left|\varpi'(\varrho)\right|+\frac{\varpi(\varrho)}{\varrho}\right)+\left|\partial_{\varrho}\G(y,\varrho)\right|\varpi(\varrho)\right)\varrho^{d+2}\d\varrho < \infty.\end{multline}
Under such assumption, the existence of an invariant density $\Psi_{\H}$ has been derived in \cite[Theorem 6.7]{LMR} and, for $\partial\Omega$ of class $\mathcal{C}^{1,\alpha}$ $(\alpha >\frac{1}{2})$, the conclusions of Theorem \ref{theo:compUH} and Corollary \ref{cor:convUH} hold true. Notice that, in this case, the zero eigenvalue is simple.
\end{exe}

\begin{exe} A more specific case can be considered here which corresponds to the previous Example with 
\begin{multline*}
\G(x,v)=\mathcal{M}_{\theta(x)}(v), \qquad \\
 \mathcal{M}_{\theta}(v)=(2\pi\theta)^{-d/2}\exp\left(-\frac{|v|^{2}}{2\theta}\right), \qquad x \in \partial \Omega, \:\:v \in V=\{w\in \R^{d}\;;|w| > r_{0}\}.\end{multline*}
for some $r_{0} >0$ given. Then, 
$$\gamma(x)=\bm{\kappa}_{d}\sqrt{\theta(x)}\int_{V}|w|\M_{1}(w)\d w, \qquad x \in \partial\Omega$$ 
for some positive constant $\bm{\kappa}_{d}$ depending only on the dimension. Assume the mapping $\theta\::\:\pO \mapsto \theta(x) \in \R^{+}$ to be continuous and bounded from below by some positive constant, 
$$\inf_{x\in\pO}\theta(x)=\theta_{0} >0.$$ 
Then, for the special choice 
$$\varpi(\varrho)=\begin{cases}\varrho^{m}, \qquad &m \geq 0\\
\exp\left(\alpha\varrho^{s}\right), \qquad \quad &\alpha >0,\qquad s \in (0,2),\\
\exp\left(\beta\varrho^{2}\right), \qquad &\beta \in (0,\frac{1}{2\theta_{\infty}})\end{cases}$$
where $\theta_{\infty}=\sup_{x\in\pO}\theta(x)$, one sees that Assumptions \ref{hypr0} are met (see \eqref{eq:Gvarpi}). Therefore, for $\partial\Omega$ of class $\mathcal{C}^{1,\alpha}$ $(\alpha > \frac{1}{2})$, the conclusions of Theorem \ref{theo:compUH} and Corollary \ref{cor:convUH} hold true. 
\end{exe}
 
Even if the two previous examples are such that 
$\bm{k}(x,v,v')$ is actually independent of $v'$, our method applies to more general situation since Assumptions \ref{hypr0} which provide some practical conditions ensuring the validity of our results is covering, in full generality, the case of a kernel $\bm{k}$ depending on both $v$ and $v'$. The most physically relevant model of boundary conditions for which the kernel $\bm{k}(x,v,v')$ is \emph{really} depending on the velocity $v'$ is the so-called Cercignani-Lampis boundary conditions \cite{CL}. Such a model has been thoroughly studied in a recent contribution \cite{bernou3} and, unfortunately, it seems that such a model does not fall into the framework described in the present paper in full generality since the conclusion of Theorem \ref{theo:weak-com} does not seem to apply for such a model, see \cite[Proposition 13]{bernou3}.

We conclude this Section with the following open problems. The first one regards the case in which the $0$ is not a simple eigenvalue
\begin{open} If $0$ is not a simple eigenvalue, i.e. if 
$$\mathrm{dim}\mathrm{Ker}(\T_{\H})=\mathrm{dim}\left(\mathrm{Range}\;\mathbf{P}_{0}\right)=n >1\,,$$ then one may wonder what is exactly the form of the spectral projection $\mathbf{P}_{0}$. We conjecture that, in this case, there exist exactly $n$ distinct nonnegative eigenfunctions $\Psi_{1},\ldots,\Psi_{n}$ with \emph{pairwise disjoint supports} associated to the zero eigenvalue of $\T_{\H}.$
\end{open}

A second open problem regards
the role of the regularity of $\pO$ 
\begin{open} We may wonder if the assumption that $\pO$ is of class $\mathcal{C}^{1,\alpha}$ with $\alpha >\frac{1}{2}$ is really necessary. Such an assumption plays a role only in the proof of Lemma \ref{lem:Jl}  thanks to Lemma \ref{lem:1} but seems only technical and, under the mere assumption $\pO$ of class $\mathcal{C}^{1}$,  we conjecture that $U_{\H}(t)$ is compact for $t$ large enough. Notice also that it would be interesting to extend our results to the case in which $\pO$ is \emph{piecewise} of class $\mathcal{C}^{1}$ which would allow to cover also the case stochastic billiards on polygonal tables studied in \cite{eva99}.
\end{open}

 \appendix


\section{The case of partly diffuse boundary conditions}\label{sec:partial}

In this appendix, we provide some insights about the generalisation of the results obtained so far to the general case of partly diffuse boundary operators as introduced in our first contribution \cite{LMR}. We describe the asymptotic spectrum of the generator and give a conjecture on the \emph{quasi-compactness} of the semigroup. We begin with recalling the definition from \cite{LMR} adapted to our context (see also \cite{voigt}):
\begin{defi}\label{ck} We shall say that a boundary operator $\mathsf{H} \in 
\mathscr{B}(\lp,\lm)$ is stochastic \emph{partly diffuse} if it writes
\begin{equation}\label{eq:partlydif}
\mathsf{H}\psi(x,\v)=\alpha(x)\,\mathsf{R}\psi (x,\v)+\left(1-\alpha(x)\right)\,\mathsf{K}\psi(x,\v), \qquad  (x,\v) \in \Gamma_-, \psi \in \lp\end{equation} where $\alpha(\cdot)\::\:\partial \Omega \to [0,1]$  is measurable,  $\mathsf{K} \in \mathscr{B}(\lp,\lm)$ is a \emph{stochastic} diffuse boundary operator satisfying Assumptions \ref{hypH} and $\mathsf{R}$ is a reflection operator
$$\mathsf{R}(\varphi)(x,\v)=\varphi(x,\mathcal{V}(x,\v)) \qquad
\qquad \forall (x,\v) \in \Gamma_-, \:\varphi \in \lp$$
where $\mathcal{V}\::\:x \in \partial \Omega \mapsto \mathcal{V}(x,\cdot)$ 
is a field of bijective bi-measurable and $\bm{\mu}_{x}$-preserving mappings
$$\mathcal{V}(x,\cdot)\::\:\Gamma_{-}(x) \cup \Gamma_{0}(x) \to \Gamma_{+}(x) \cup \Gamma_{0}(x)$$
such that
\begin{enumerate}[i)]
\item $|\mathcal{V}(x,\v)|=|\v|$ for any $(x,\v) \in \Gamma_-$.
\item If $(x,v) \in \Gamma_{0}$ then $(x,\mathcal{V}(x,v)) \in \Gamma_{0}$, i.e. $\mathcal{V}(x,\cdot)$ maps $\Gamma_{0}(x)$ in $\Gamma_{0}(x).$
\item The mapping 
$$(x,\v) \in \Gamma_- \mapsto (x,\mathcal{V}(x,\v)) \in
\Gamma_+$$
is a $\mathcal{C}^{1}$ diffeomorphism.
\end{enumerate}
\end{defi}

\begin{exe}\phantomsection\label{exe:specu} In practical situations, the most frequently used
pure reflection conditions are
\begin{enumerate}[(a)]
\item the {\it specular reflection boundary
conditions} for which $$\mathcal{V}(x,\v)=\v-2(\v \cdot
n(x))\,n(x) \qquad \qquad (x,\v) \in \Gamma_-.$$ 
Notice that, for $\mathcal{V}$ to be a $\mathcal{C}^{1}$ diffeormorphism, we need $\pO$ to be of class $\mathcal{C}^{2}.$
\item The {\it bounce--back
reflection conditions} for which $\mathcal{V}(x,\v)=-\v$, $(x,\v)
\in \Gamma_-$.\end{enumerate}\end{exe}

With the classical terminology used in kinetic theory of gases, the parameter 
$$\beta(x)=1-\alpha(x), \qquad \qquad x\in \pO$$
is referred to as the \emph{accomodation coefficient}. It has been shown in \cite{LMR} that 
$$
\left(\mathsf{M}_{0}\H\right)^{2}=\left(\mathsf{M}_{0}(\beta \mathsf{K})\right)^{2}+\left(
\mathsf{M}_{0}(\alpha \mathsf{R})\right)^{2}+\mathsf{M}_{0}(\alpha\mathsf{R})\mathsf{M}_{0}(\beta\mathsf{K})+\mathsf{M}_{0}(\beta
\mathsf{K})\mathsf{M}_{0}(\alpha \mathsf{R})
$$
Setting
$$\beta_{\infty}:=\mathrm{ess}\sup_{x \in \pO}\beta(x)$$
one has  $\left(\mathsf{M}_{0}(\beta \mathsf{K})\right) ^{2}$ is weakly compact and 
$$\left\|\left(\mathsf{M}_{0}(\alpha \mathsf{R})\right) ^{2}+\mathsf{M}_{0}(\alpha \mathsf{R})\mathsf{M}_{0}(\beta\mathsf{K})+\mathsf{M}_{0}(\beta \mathsf{K})\mathsf{M}_{0}
(\alpha \mathsf{R})\right\|_{\mathscr{B}(\lp)} \leq \left( 1+\mathrm{osc}(\beta )\right)
^{2}-\beta_{\infty}^{2}
$$  
where $\mathrm{osc}(\beta)=\mathrm{ess}\!\sup_{x\in \pO}\beta(x)-\mathrm{ess}\!\inf_{x\in\pO}\beta(x)$ is the oscillation of $\beta(\cdot).$ As in \cite[Theorem 5.6]{LMR}, we assume that
\begin{equation}\label{eq:bmc}\bm{c}_{\beta}:=\left( 1+\mathrm{osc}(\beta )\right)^{2}-\beta_{\infty}
^{2}<1.\end{equation}
We point out that such an assumption of course excludes the case of pure reflection boundary conditions, corresponding to $\alpha \equiv 1$. 
We set 
$$\lambda_{\beta}:=-\frac{r_{0}}{2D}\log\bm{c}_{\beta} >0$$ 
and have the following
\begin{lemme} Assume that $\H$ is a partly diffuse operator in the sense of the above Definition \ref{ck} satisfying \eqref{eq:bmc}. Then, there is a discrete set 
$\bm{\Theta} \subset \C$ such that,
 for any $\lambda \in \C$ with $\mathrm{Re}\l >-\l_{\beta}$ the following alternative holds:
\begin{enumerate}[i)]
\item either $1$ is the resolvent set of $\mathsf{M}_{\l}\H$
\item or  $1 \in \mathfrak{S}_{p}(\mathsf{M}_{\l}\H)$ and then $\l \in \bm{\Theta}.$
\end{enumerate}\end{lemme}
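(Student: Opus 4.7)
The plan is to apply the analytic Fredholm theorem in its Gohberg--Sigal formulation (for Fredholm-operator-valued analytic functions) to $\l\mapsto\mathbf{I}-\mathsf{M}_\l\H$ on the connected open half-plane $\Omega:=\{\l\in\C\,:\,\mathrm{Re}\l>-\l_\beta\}$. The two nontrivial inputs that must be verified are (a) that this family is Fredholm of index zero throughout $\Omega$, and (b) that at least one point of invertibility exists. For (b), Proposition \ref{propo:resolvante} gives $r_\sigma(\mathsf{M}_\l\H)<1$ for $\mathrm{Re}\l>0$, hence both $1$ and $-1$ lie in $\rho(\mathsf{M}_\l\H)$ on the right half-plane; analyticity of $\l\mapsto\mathsf{M}_\l\H$ on $\Omega$ is a direct adaptation of Lemma \ref{lem:entire}, so the analysis reduces to (a).

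To establish (a), first extend the $\l=0$ decomposition recalled in the excerpt by writing $(\mathsf{M}_\l\H)^2=A_\l^{(1)}+A_\l^{(2)}$ with $A_\l^{(2)}=(\mathsf{M}_\l(\beta\mathsf{K}))^2$. The pointwise domination $|\mathsf{M}_\l\varphi|\le e^{(\mathrm{Re}\l)^-D/r_0}\mathsf{M}_0|\varphi|$, together with the norm bound $\bm{c}_\beta$ at $\l=0$, yields
\[
\|A_\l^{(1)}\|_{\mathscr{B}(\lp)}\le e^{2(\mathrm{Re}\l)^-D/r_0}\,\bm{c}_\beta,
\]
while the same domination exhibits $|A_\l^{(2)}\psi|$ as bounded by a positive scalar multiple of $(\mathsf{M}_0(\beta\mathsf{K}))^2|\psi|$, so $A_\l^{(2)}$ is weakly compact since $(\mathsf{M}_0(\beta\mathsf{K}))^2$ is. On $\Omega$ we have $(\mathrm{Re}\l)^-<\l_\beta=-\tfrac{r_0}{2D}\log\bm{c}_\beta$, whence $\|A_\l^{(1)}\|<1$ and $\mathbf{I}-A_\l^{(1)}$ is invertible. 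The factorisation
\[
\mathbf{I}-(\mathsf{M}_\l\H)^2=(\mathbf{I}-A_\l^{(1)})\bigl(\mathbf{I}-T(\l)\bigr),\qquad T(\l):=(\mathbf{I}-A_\l^{(1)})^{-1}A_\l^{(2)},
\]
then presents $T(\l)$ as a weakly compact operator whose square, by the Dunford--Pettis property of $L^1$, is compact. Hence $T(\l)$ is a Riesz operator, $\mathbf{I}-T(\l)$ is Fredholm of index zero, and so is $\mathbf{I}-(\mathsf{M}_\l\H)^2$.

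To transfer this Fredholmness from $(\mathsf{M}_\l\H)^2$ to $\mathsf{M}_\l\H$, I would use the commuting factorisation $\mathbf{I}-(\mathsf{M}_\l\H)^2=(\mathbf{I}-\mathsf{M}_\l\H)(\mathbf{I}+\mathsf{M}_\l\H)$ and pass to the Calkin algebra $\mathscr{B}(X)/\mathscr{K}(X)$: the two images commute and their product is invertible. In any unital associative algebra, commuting elements with invertible product are themselves invertible (explicitly, $\pi(\mathbf{I}\pm\mathsf{M}_\l\H)^{-1}=\pi(\mathbf{I}\mp\mathsf{M}_\l\H)\cdot\pi(\mathbf{I}-(\mathsf{M}_\l\H)^2)^{-1}$). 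Thus $\mathbf{I}\pm\mathsf{M}_\l\H$ are Fredholm throughout $\Omega$, and the stability of the Fredholm index along analytic families on the connected set $\Omega$, combined with invertibility for $\mathrm{Re}\l>0$, forces each index to be zero.

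All hypotheses of the analytic Fredholm theorem then deliver a discrete set $\bm{\Theta}\subset\Omega$ outside which $\mathbf{I}-\mathsf{M}_\l\H$ is invertible, giving case (i). For $\l\in\bm{\Theta}$ the operator $\mathbf{I}-\mathsf{M}_\l\H$ is Fredholm of index zero but not invertible, hence not injective, and $1\in\mathfrak{S}_p(\mathsf{M}_\l\H)$, which is case (ii). The main obstacle in this plan is exactly the transfer in the previous paragraph: the norm-plus-weakly-compact decomposition directly controls only the second power, and it is only via the Calkin algebra observation that one can recover Fredholmness of the single power $\mathbf{I}-\mathsf{M}_\l\H$ and the sharp equivalence ``not invertible $\iff$ $1$ is an eigenvalue'' which underpins the final alternative.
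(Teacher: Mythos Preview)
Your proof is correct and follows essentially the same strategy as the paper: the same decomposition of $(\mathsf{M}_\l\H)^2$ into a small-norm part $\mathsf{L}_\l=A_\l^{(1)}$ and a weakly compact part $(\mathsf{M}_\l(\beta\mathsf{K}))^2=A_\l^{(2)}$, the same factorisation $\mathbf{I}-(\mathsf{M}_\l\H)^2=(\mathbf{I}-\mathsf{L}_\l)(\mathbf{I}-T(\l))$, and an analytic Fredholm argument. The only cosmetic difference is in the transfer from the square to the first power: the paper invokes the spectral mapping theorem for the essential spectral radius to get $r_{\mathrm{ess}}(\mathsf{M}_\l\H)<1$ and then applies the standard analytic Fredholm alternative to $T(\l)$ at the level of the square, whereas you obtain Fredholmness of $\mathbf{I}\pm\mathsf{M}_\l\H$ via the Calkin algebra and apply the Gohberg--Sigal version directly to $\mathbf{I}-\mathsf{M}_\l\H$.
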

\begin{proof} Notice that
\begin{equation}\label{eq:MHcarre}\begin{split}
\left(\mathsf{M}_{\l}\H\right)^{2}&=\left(\mathsf{M}_{\lambda }(\beta \mathsf{K})\right)
^{2}+\left(\mathsf{M}_{\lambda }(\alpha \mathsf{R})\right) ^{2}+\mathsf{M}_{\lambda }(\alpha
\mathsf{R})\mathsf{M}_{\lambda }(\beta \mathsf{K})+\mathsf{M}_{\lambda }(\beta \mathsf{K)M}_{\l}(\alpha \mathsf{R}) \\
&=:\left(\mathsf{M}_{\l}(\beta \mathsf{K})\right) ^{2}+\mathsf{L}_{\lambda }
\end{split}\end{equation}%
where $\left(\mathsf{M}_{\lambda }(\beta \mathsf{K})\right) ^{2}$ is a weakly-compact operator (by a simple domination argument). Invoking \eqref{eq:Mlvarphi}, one sees that, for $\mathrm{Re}\l \geq 0$, it holds 
\begin{multline*}
\left\|\mathsf{L}_{\l}\right\|_{\mathscr{B}(\lp)} \leq \left\|\mathsf{L}_{0}\right\|_{\mathscr{B}(\lp)}
=\left\|\left(\mathsf{M}_{0}(\alpha \mathsf{R})\right) ^{2}+\mathsf{M}_{0}(\alpha
\mathsf{R})\mathsf{M}_{0}(\beta \mathsf{K})+\mathsf{M}_{0}(\beta \mathsf{K)M}_{0}(\alpha \mathsf{R}) \right\|_{\mathscr{B}(\lp)} <1\end{multline*}
whereas, for $\mathrm{Re}\lambda <0$%
\begin{equation*}
\left\|\mathsf{L}_{\l}\right\|_{\mathscr{B}(\lp)} \leq  \exp\left(-2\frac{D\mathrm{Re}\lambda }{r_{0}}\right)\left\|\mathsf{L}_{0}\right\|_{\mathscr{B}(\lp)}
\leq \exp\left(-2\frac{D\mathrm{Re}\lambda }{r_{0}}\right)\bm{c}_{\beta }<1
\end{equation*}
as soon as 
$\mathrm{Re}\lambda >\frac{r_{0}}{2D}\log \bm{c}_{\beta }.$ Consequently,
$
r_{\mathrm{ess}}\left( \left(\mathsf{M}_{\lambda }\H\right) ^{2}\right) <1$ for any $\mathrm{Re}%
\lambda >-\lambda _{\beta}.$
From the spectral mapping theorem, we deduce then that
$$r_{\mathrm{ess}}\left(\mathsf{M}_{\lambda }\H\right) <1\qquad \qquad \forall \mathrm{Re}\lambda >-\lambda
_{\beta}.
$$
As a consequence, for $\mathrm{Re}\l >-\l_{\beta}$,
$$1\in \mathfrak{S}\left(\mathsf{M}_{\lambda }\H\right)  \iff 1\in \mathfrak{S}
_{p}\left(\mathsf{M}_{\lambda }\H\right)$$
and in particular, if $1 \in \mathfrak{S}\left(\mathsf{M}_{\l}\H\right)$ then $1\in \mathfrak{S}_{p}\left(
\left(\mathsf{M}_{\lambda }\H\right) ^{2}\right) .$ Let us therefore investigate the spectral problem
$$g-\left(\mathsf{M}_{\lambda }\H\right) ^{2}g=h$$
which, thanks to \eqref{eq:MHcarre} is equivalent to
$g-\mathsf{L}_{\lambda }g-\left(\mathsf{M}_{\lambda }(\beta \mathsf{K})\right) ^{2}g=h
$,
i.e. 
$$g-\Rs\left(1,\mathsf{L}_{\lambda }\right)\left(\mathsf{M}_{\lambda }(\beta \mathsf{K})\right)
^{2}g=\Rs\left(1,\mathsf{L}_{\lambda }\right)h.$$
Since $\left(\mathsf{M}_{\lambda }(\beta\mathsf{K})\right) ^{2}$ is weakly-compact
we deduce from the analytic Fredholm alternative that the set 
$$\bm{\Theta}:=\{\l\in \C\,;\,\mathrm{Re}\l > -\l_{\beta}\;\mathrm{and} \,1 \in \mathfrak{S}(\Rs\left(1,\mathsf{L}_{\lambda }\right)\left(\mathsf{M}_{\lambda }(\beta \mathsf{K})\right)
^{2})\}$$
is discrete. This in particular implies that the set
$$\{\l\in \C\,;\,\mathrm{Re}\l > -\l_{\beta}\;\mathrm{and} \,1 \in \mathfrak{S}_{p}\left((\mathsf{M}_{\l}\H)^{2}\right)\}$$
is discrete.
If now $\mathrm{Re}\l >- \l_{\beta}$ and $\l \in \C \setminus \bm{\Theta}$, then $1$ belongs to the resolvent set of $\Rs\left(1,\mathsf{L}_{\lambda }\right)\left(\mathsf{M}_{\lambda }(\beta \mathsf{K}\right)
^{2})$ which implies that $1$ is the resolvent set of $\left(\mathsf{M}_{\l}\H\right)^{2}.$ This proves the Lemma.
\end{proof}
This leads then to the following
\begin{propo}\label{prop:specTH} Assume that $\H$ is a partly diffuse operator in the sense of the above Definition \ref{ck} which satisfies \eqref{eq:bmc}. 
Setting $$\lambda_{\beta}:=-\frac{r_{0}}{2D}\log \bm{c}_{\beta } >0,$$
for any $\eta  \in \left(0,\lambda_{\beta}\right)$, 
$\mathfrak{S}(\T_{\H}) \cap \{\l \in \C\;;\;\mathrm{Re}\lambda \geq -\eta\}$
consists at most in a finite number of eigenvalues of $\T_{\H}$ with finite algebraic multiplicities. 
\end{propo}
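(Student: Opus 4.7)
The plan is to combine the discreteness of $\bm{\Theta}$ established in the previous lemma with the polynomial decay of $(\mathsf{M}_\l(\beta\mathsf{K}))^2$ along the imaginary direction, and then close the argument via analytic Fredholm theory.

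First, I extend Lemma \ref{lem:spec} to the partly diffuse setting. Since $\mathfrak{S}(\T_0)=\varnothing$ by Lemma \ref{lem:entire}, the resolvent identity $\Rs(\l,\T_\H)=\Rs(\l,\T_0)+\mathsf{\Xi}_\l\H\,\Rs(1,\mathsf{M}_\l\H)\,\mathsf{G}_\l$ yields the implication $\l\in\mathfrak{S}(\T_\H)\Rightarrow 1\in\mathfrak{S}(\mathsf{M}_\l\H)$, which by the previous lemma forces $\l\in\bm{\Theta}$ whenever $\mathrm{Re}\l>-\lambda_\beta$. Since $r_{\mathrm{ess}}(\mathsf{M}_\l\H)<1$ in that region, Fredholm theory upgrades this to $1\in\mathfrak{S}_p(\mathsf{M}_\l\H)$, and the standard construction $\varphi=\mathsf{\Xi}_\l\H\psi$ from a fixed point $\psi$ of $\mathsf{M}_\l\H$ shows $\l\in\mathfrak{S}_p(\T_\H)$. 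Hence $\mathfrak{S}(\T_\H)\cap\{\mathrm{Re}\l\geq-\eta\}$ is contained in $\bm{\Theta}$, consists entirely of eigenvalues, and inherits the discreteness of $\bm{\Theta}$.

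To upgrade discreteness to finiteness I would bound this set. Since the stochastic partly diffuse $\H$ makes $(U_\H(t))_{t\geq0}$ an $L^1$-contraction, one has $\mathfrak{S}(\T_\H)\subset\{\mathrm{Re}\l\leq 0\}$, so the analysis reduces to the bounded-real-part strip $\mathcal{S}_\eta:=\{-\eta\leq\mathrm{Re}\l\leq 0\}$. On $\mathcal{S}_\eta$, the choice $\eta<\lambda_\beta$ gives $\|\mathsf{L}_\l\|_{\mathscr{B}(\lp)}\leq \bm{c}_\beta e^{2D\eta/r_0}=e^{-2D(\lambda_\beta-\eta)/r_0}<1$ uniformly, so $\Rs(1,\mathsf{L}_\l)$ is uniformly bounded on $\mathcal{S}_\eta$. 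At the same time, the argument of Lemma \ref{lem:norm2} transfers to the sub-stochastic kernel $\beta(\cdot)\bm{k}(\cdot,\cdot,\cdot)$ (since $1-\alpha\leq 1$ only improves the integrals involved), yielding $\|(\mathsf{M}_\l(\beta\mathsf{K}))^2\|_{\mathscr{B}(\lp)}\leq C_\eta/|\l|$ uniformly on $\mathcal{S}_\eta$. Consequently $\|\Rs(1,\mathsf{L}_\l)(\mathsf{M}_\l(\beta\mathsf{K}))^2\|<1$ once $|\mathrm{Im}\l|$ exceeds a threshold depending only on $\eta$; for such $\l$, the characterization of $\bm{\Theta}$ from the previous lemma forces $\l\notin\bm{\Theta}$. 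Hence $\bm{\Theta}\cap\mathcal{S}_\eta$ is bounded, and being discrete, it is finite.

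Finite algebraic multiplicity of each such eigenvalue follows from analytic Fredholm theory already used in the previous lemma: near any $\l_0\in\bm{\Theta}\cap\mathcal{S}_\eta$, the analytic family $\l\mapsto \Rs(1,\mathsf{L}_\l)(\mathsf{M}_\l(\beta\mathsf{K}))^2$ is pointwise weakly compact, making $(I-\Rs(1,\mathsf{L}_\l)(\mathsf{M}_\l(\beta\mathsf{K}))^2)^{-1}$ meromorphic with finite-rank residues, which transfers via the resolvent formula into finiteness of the algebraic multiplicity of $\l_0$ as an eigenvalue of $\T_\H$. The main delicate step is checking that Lemma \ref{lem:norm2} transfers unchanged to the truncated kernel $\beta(x)\bm{k}(x,|v|,|v'|)$: the only place the normalization \eqref{eq:normalise} intervenes is in the estimate of $I_2$, where it becomes the milder identity $\int\bm{k}\,|v\cdot n|\,\bm{m}(\d v)=\beta(x)\leq 1$, so the proof goes through with unchanged constants.
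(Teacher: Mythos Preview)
Your argument is correct and tracks the paper's own proof closely: both establish the equivalence $\l\in\mathfrak{S}(\T_\H)\Leftrightarrow 1\in\mathfrak{S}_p(\mathsf{M}_\l\H)$, invoke the preceding lemma for discreteness and finite multiplicity, and then exclude large $|\mathrm{Im}\l|$ by appealing to the $|\l|^{-1}$ decay of the diffuse part. The only difference lies in this last step. The paper asserts directly that $\|(\mathsf{M}_\l\H)^2\|_{\mathscr{B}(\lp)}\to 0$ as $|\mathrm{Im}\l|\to\infty$ uniformly on $\{\mathrm{Re}\l\geq -\eta\}$, whence $1$ cannot be in its spectrum; you instead route the argument through the Fredholm characterization of $\bm\Theta$, showing $\|\Rs(1,\mathsf{L}_\l)(\mathsf{M}_\l(\beta\mathsf{K}))^2\|<1$ once $|\mathrm{Im}\l|$ is large. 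Your version is actually the more accurate of the two: for a genuinely partly diffuse $\H$ the purely reflective contribution $(\mathsf{M}_\l(\alpha\mathsf{R}))^2$ sitting inside $(\mathsf{M}_\l\H)^2$ does \emph{not} decay in $|\mathrm{Im}\l|$, so one only obtains
\[
\|(\mathsf{M}_\l\H)^2\|_{\mathscr{B}(\lp)}\leq \|\mathsf{L}_\l\|_{\mathscr{B}(\lp)}+\|(\mathsf{M}_\l(\beta\mathsf{K}))^2\|_{\mathscr{B}(\lp)}\leq \bm c_\beta\,e^{2D\eta/r_0}+o(1)<1,
\]
which still forces $1\notin\mathfrak{S}(\mathsf{M}_\l\H)$ but is not literally $\to 0$ as the paper writes. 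Both routes implicitly need the decay estimate of Lemma~\ref{lem:norm2} for the diffuse part $\mathsf{K}$ (hence Assumption~\ref{hypr0} on its kernel), a point you flag explicitly and the paper leaves tacit.
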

\begin{proof} Recall that, for $\mathrm{Re}\l > -\lambda_{\beta}$, 
$$\lambda \in \mathfrak{S}(\T_{\H}) \iff 1 \in \mathfrak{S}\left(\mathsf{M}_{\l}\H\right) \iff 1 \in \mathfrak{S}_{p}\left(\mathsf{M}_{\l}\H\right)$$
and
$$\lambda \in \mathfrak{S}_{p}(\T_{\H}) \iff 1\in \mathfrak{S}_{p}\left(
\mathsf{M}_{\lambda }\H\right).$$
Therefore, from the previous Lemma, $\mathfrak{S}(\T_{\H}) \cap \{\l \in \C\;;\;\mathrm{Re}\l > -\lambda_{\beta}\}$ consists at most in a discrete set of eigenvalues with finite algebraic multiplicity. Now,  if $1\in \mathfrak{S}_{p}\left(\mathsf{M}_{\lambda
}\H\right) $ then $1 \in\mathfrak{S}\left(\left(\mathsf{M}_{\l}\H\right)^{2}\right)$. Since, for any $\eta \in (0,\lambda_{\beta})$,
$$\lim_{R \to \infty}\sup_{|\mathrm{Im}\l| \geq R}\sup_{\mathrm{Re}\l \geq -\eta}\left\|\left(\mathsf{M}_{\l}\H\right)^{2}\right\|_{\mathscr{B}(\lp)}=0$$
one sees that, for $\eta \in (0,\lambda_{\beta})$, the set $\{\l \in \C\,;\,\mathrm{Re}\l \geq -\eta\} \cap \{\l \in \C\;;1 \in \mathfrak{S}_{p}(\mathsf{M}_{\l}\H)\}$ is at most finite which proves the result.
\end{proof}
We can complement the above with the following
\begin{lemme}\label{lem:bounIm}
Under the Assumption of Proposition \ref{prop:specTH}, for any $\eta \in (0,\lambda_{\beta})$, there is $M >0$ such that 
$$\sup\left\{\left\|\Rs(\l,\T_{\H})\right\|_{\mathscr{B}(X)}\;;\;\mathrm{Re}\l \geq -\eta\,, \;\:  |\mathrm{Im}\l| \geq M\right\} < \infty.$$\end{lemme}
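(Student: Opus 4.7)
The plan is to start from the resolvent identity in Proposition \ref{propo:resolvante}, namely
$$\Rs(\l,\T_{\H})=\mathsf{R}_{\l}+\mathsf{\Xi}_{\l}\H\,\Rs(1,\mathsf{M}_{\l}\H)\,\mathsf{G}_{\l},$$
and control each factor uniformly on the strip $\mathcal{S}_\eta:=\{\l\in\C\;;\;\mathrm{Re}\l\geq-\eta\}$ for $\eta\in(0,\l_\beta)$. The operators $\mathsf{R}_\l$, $\mathsf{\Xi}_\l$, $\mathsf{G}_\l$ and $\H$ are entire and, thanks to the velocity lower bound $|v|\geq r_0$, satisfy the obvious exponential estimates $\|\mathsf{R}_\l\|,\|\mathsf{\Xi}_\l\|,\|\mathsf{G}_\l\|\leq \exp(D\eta/r_0)/(\ldots)$-type bounds uniformly on $\mathcal{S}_\eta$ (see Lemma \ref{lem:entire}); in particular, all that remains to bound is the factor $\Rs(1,\mathsf{M}_\l\H)$.

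The key algebraic identity is
$$\Rs(1,\mathsf{M}_\l\H)=(I+\mathsf{M}_\l\H)\,\Rs\!\left(1,(\mathsf{M}_\l\H)^2\right),$$
which reduces everything to controlling $\Rs(1,(\mathsf{M}_\l\H)^2)$, since $\|\mathsf{M}_\l\H\|_{\mathscr{B}(\lp)}\leq\exp(\eta D/r_0)$ is uniformly bounded on $\mathcal{S}_\eta$. Now I use the splitting \eqref{eq:MHcarre}, i.e.
$$(\mathsf{M}_\l\H)^2=(\mathsf{M}_\l(\beta\mathsf{K}))^2+\mathsf{L}_\l,$$
and recall from the proof of Proposition \ref{prop:specTH} that
$$\|\mathsf{L}_\l\|_{\mathscr{B}(\lp)}\leq \bm{c}_\beta\,\exp\!\left(2D\eta/r_0\right)=:\kappa_\eta<1\qquad \forall \l\in\mathcal{S}_\eta,$$
since $\eta<\l_\beta=-\frac{r_0}{2D}\log\bm{c}_\beta$. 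Consequently $(I-\mathsf{L}_\l)^{-1}$ is well defined on $\mathcal{S}_\eta$ with $\|(I-\mathsf{L}_\l)^{-1}\|\leq(1-\kappa_\eta)^{-1}$, and writing
$$I-(\mathsf{M}_\l\H)^2=(I-\mathsf{L}_\l)\Bigl[I-(I-\mathsf{L}_\l)^{-1}(\mathsf{M}_\l(\beta\mathsf{K}))^2\Bigr]$$
reduces the problem to verifying that the bracketed operator is invertible with uniform bound when $|\mathrm{Im}\l|$ is large enough.

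For this last step I invoke the decay estimate already used in the proof of Proposition \ref{prop:specTH}, namely
$$\sup_{\mathrm{Re}\l\geq-\eta}\bigl\|(\mathsf{M}_\l(\beta\mathsf{K}))^2\bigr\|_{\mathscr{B}(\lp)}\longrightarrow 0\qquad\text{as }|\mathrm{Im}\l|\to\infty,$$
which is nothing but Lemma \ref{lem:norm2} applied to the isotropic diffuse operator $\beta\mathsf{K}$ (the factor $\beta\in L^\infty(\pO)$ is harmless since $0\leq\beta\leq 1$). Therefore one can choose $M>0$ so large that
$$(1-\kappa_\eta)^{-1}\sup_{\{\mathrm{Re}\l\geq-\eta,\,|\mathrm{Im}\l|\geq M\}}\bigl\|(\mathsf{M}_\l(\beta\mathsf{K}))^2\bigr\|_{\mathscr{B}(\lp)}\leq \tfrac{1}{2},$$
which yields, by Neumann series, a uniform bound on $\|\Rs(1,(\mathsf{M}_\l\H)^2)\|$ on $\{\mathrm{Re}\l\geq-\eta\}\cap\{|\mathrm{Im}\l|\geq M\}$. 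Tracing back through the factorisations gives the desired uniform bound on $\|\Rs(\l,\T_\H)\|_{\mathscr{B}(X)}$. The only nontrivial ingredient is the $|\mathrm{Im}\l|\to\infty$ decay of $(\mathsf{M}_\l(\beta\mathsf{K}))^2$: everything else is a straightforward combination of the exponential estimates of Lemma \ref{lem:entire} with the Neumann series arguments above.
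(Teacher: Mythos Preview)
Your argument is correct and close in spirit to the paper's proof. Both start from the identity $\Rs(\l,\T_{\H})=\mathsf{R}_{\l}+\mathsf{\Xi}_{\l}\H\,\Rs(1,\mathsf{M}_{\l}\H)\,\mathsf{G}_{\l}$, bound the outer factors uniformly on the strip, and reduce everything to controlling $\Rs(1,\mathsf{M}_{\l}\H)$ through the square $(\mathsf{M}_{\l}\H)^{2}$. The difference lies in how this last step is carried out. The paper simply asserts that $\|(\mathsf{M}_{\l}\H)^{2}\|_{\mathscr{B}(\lp)}\to 0$ uniformly on $\{\mathrm{Re}\l\geq-\eta\}$ as $|\mathrm{Im}\l|\to\infty$, then sums the Neumann series via the decomposition $n=2k+s$ (which is exactly your identity $\Rs(1,A)=(I+A)\,\Rs(1,A^{2})$ in disguise). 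You instead keep the splitting $(\mathsf{M}_{\l}\H)^{2}=(\mathsf{M}_{\l}(\beta\mathsf{K}))^{2}+\mathsf{L}_{\l}$, use the uniform bound $\|\mathsf{L}_{\l}\|\leq\kappa_{\eta}<1$ on the strip, and need only the decay of the purely diffuse piece $(\mathsf{M}_{\l}(\beta\mathsf{K}))^{2}$. This is arguably more transparent: the paper's blanket claim that the \emph{full} square decays is not obvious in the partly diffuse setting, since the pure reflection part $(\mathsf{M}_{\l}(\alpha\mathsf{R}))^{2}$ is a weighted composition operator whose $L^{1}$-norm depends only on $\mathrm{Re}\l$, whereas the decay you invoke is precisely what Lemma~\ref{lem:norm2} supplies for the diffuse factor $\beta\mathsf{K}$. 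One caveat: appealing to Lemma~\ref{lem:norm2} tacitly imports Assumption~\ref{hypr0} on $\mathsf{K}$ and the $\mathcal{C}^{1,\alpha}$ regularity of $\partial\Omega$ with $\alpha>\tfrac12$, neither of which is listed among the hypotheses of Proposition~\ref{prop:specTH}; the paper's proof shares the same implicit dependence.
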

\begin{proof} For $\l \in \C$, $\mathrm{Re}\l > -\l_{\beta}$,
one has
$$\Rs(\l,\T_{\H})=\Rs(\l,\T_{0})+ \mathsf{\Xi}_{\l}\H\Rs(1,\mathsf{M}_{\l}\H)\mathsf{G}_{\l}.$$
One observes that, for any $\eta \in (0,\l_{\beta})$ and any $\mathrm{Re}\l \geq -\eta$, it holds
$$\left\|\Rs(\l,\T_{0})\right\|_{\mathscr{B}(X)}\leq  \|\Rs(-\eta,\T_{0})\|_{\mathscr{B}(X)}.$$
Since $\lim_{|\mathrm{Im}\l| \to \infty}\left\|\left(\mathsf{M}_{\l}\H\right)^{2}\right\|_{\mathscr{B}(\lp)}=0$ uniformly on $\{\l\in \C\;;\mathrm{Re}\l \geq -\eta\}$, for any $c <1$, there is $M >0$ such that
$$\left\|\left(\mathsf{M}_{\l}\H\right)^{2}\right\|_{\mathscr{B}(\lp)} \leq c < 1, \qquad \forall \l \in \Delta_{M,\eta}$$
where we set ${\Delta}_{M,\eta}:=\{\l\in \C\;;\;\mathrm{Re}\l \geq-\eta\,;\,|\mathrm{Im}\l|\geq M\}.$ In particular, $r_{\sigma}\left(\mathsf{M}_{\l}\H\right) < 1$ for any $\l \in \Delta_{M,\eta}$  and
$$\Rs(1,\mathsf{M}_{\l}\H)=\sum_{n=0}^{\infty}\left(\mathsf{M}_{\l}\H\right)^{n}
\qquad \l \in \Delta_{M,\eta}$$
Writing $n=2k+s$ with $k \in \N$ and $s \in \{0,1\}$, one sees that, for $\l \in \Delta_{M,\eta}$, 
\begin{multline*}
\left\|\Rs(1,\mathsf{M}_{\l}\H)\right\|_{\mathscr{B}(\lp)} \leq 
\sum_{k\in \N,\,s=0,1}\left\|\left(\mathsf{M}_{\l}\H\right)^{2}\right\|_{\mathscr{B}(\lp)}^{k}\,\|\mathsf{M}_{\l}\H\|_{\mathscr{B}(\lp)}^{s}\\
\leq \frac{\max\left(1,\left\|\mathsf{M}_{\l}\H\right\|_{\mathscr{B}(\lp)}\right)}{1-\left\|\left(\mathsf{M}_{\l}\H\right)^{2}\right\|_{\mathscr{B}(\lp)}}
\leq \frac{1}{1-c}\max\left(1,\left\|\mathsf{M}_{\l}\H\right\|_{\mathscr{B}(\lp)}\right)\end{multline*}
Therefore
$$\left\|\Rs(1,\mathsf{M}_{\l}\H)\right\|_{\mathscr{B}(\lp)} \leq  \frac{1}{1-c}\max\left(1,\left\|\mathsf{M}_{-\eta}\H\right\|_{\mathscr{B}(\lp)}\right)$$
for any $\l \in \Delta_{M,\eta}$ which achieves the proof.
\end{proof}

The spectral structure of $\T_{\H}$ together with Lemma \ref{lem:bounIm} allow to show in a standard way that, for any $f \in \D(\T_{\H})$, one can prove that there is $\eta >0$ and $C_{f} \geq 0$ such that
\begin{equation}\label{eq:expo}
\left\|U_{\H}(t)\left(I-\mathbf{P}_{0}\right)f\right\|_{X}\leq C_{f}\exp\left(-\eta t\right) \qquad t\geq0\end{equation}
where $C_{f}$ actually depends on $f$ and $\T_{\H}f$. Such an estimate is a general consequence of an abstract result from \cite{weis} which asserts that,
for general $C_{0}$-semigroup $\left(V(t)\right)_{t\geq0}$ on $X$ with generator $A$
\begin{equation}\label{eq:slemrod}
\omega_{1}(V) \leq s_{0}(A)\end{equation}
where, given $m \geq 0$,
$$s_{m}(A):=\inf\left\{s >s(A)\;;\;\left\|\Rs(\alpha+i\beta,A)\right\|_{\mathscr{B}(X)}=O\left(|\beta|^{m}\right) \quad \text{as } |\beta|\to\infty,\;\;\alpha \geq s\right\}$$
and
$$\omega_{m}(V)=\inf\{\omega \in \R\;;\;\sup_{t\geq0}\left\|e^{-\omega t}V(t)\Rs(\l,A)^{m}\right\|_{\mathscr{B}(X)} < \infty\}$$
for some $\l \in \C \setminus \mathfrak{S}(A)$. The resolvent identity shows that $\omega_{m}(V)$ is independent of $\l$.  In the present situation, once we notice that $0$ is an isolated and dominant eigenvalue of $\T_{\H}$ with finite algebraic multiplicity and denoting by $\bm{P}_{0}$ the associated spectral projection, one can apply the inequality \eqref{eq:slemrod} with  
$$A=\T_{\H}\left(I-\bm{P}_{0}\right), \qquad \qquad V(t)=U_{\H}(t)\left(I-\bm{P}_{0}\right), \qquad t\geq0$$
where Lemma \ref{lem:bounIm} exactly means that $s_{0}(A) <0$ proving the inequality \eqref{eq:expo}. We refer the reader to \cite[Section 2]{desong}   for full details on this approach for similar kind of results for collisional kinetic theory (see also \cite{mmk-laplace}).

This leads to the following conjecture
\begin{conj}
We conjecture that, under the Assumption of Proposition \ref{prop:specTH}, the $C_{0}$-semigroup $\left(U_{\H}(t)\right)_{t\geq0}$ admits a positive spectral gap $\l_{0} \in (0,\l_{\beta})$ such that
$$\left\|U_{\H}(t)\left(I-\mathbf{P}_{0}\right)\right\|_{\mathscr{B}(X)}=\mathrm{O}\left(\exp\left(-\lambda_{0}t\right)\right), \qquad t\geq0.$$
\end{conj}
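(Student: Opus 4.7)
The plan is to establish quasi-compactness of the $C_{0}$-semigroup $(U_{\H}(t))_{t\geq0}$, i.e. that its essential type satisfies $\omega_{\mathrm{ess}}(U_{\H}) \leq -\l_{\beta} < 0$. Since $\omega_{0}(U_{\H})=0=s(\T_{\H})$ and since, by Proposition \ref{prop:specTH}, the spectrum of $\T_{\H}$ in $\{\mathrm{Re}\l \geq -\eta\}$ consists of finitely many eigenvalues of finite algebraic multiplicity for any $\eta \in (0,\l_{\beta})$, the zero eigenvalue will then automatically be a simple pole (by the same contradiction argument as in the proof of Corollary \ref{cor:convUH}, using boundedness of the semigroup to rule out nilpotent parts). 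Standard semigroup theory then upgrades the pointwise decay on $\D(\T_{\H}^{2})$ already established in the preceding remark to the operator-norm statement of the conjecture.

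To obtain quasi-compactness, I would iterate the decomposition \eqref{eq:MHcarre}: for $N\geq1$,
\[
(\mathsf{M}_{\l}\H)^{2N}=\mathsf{L}_{\l}^{N}+\mathsf{C}_{N}(\l),
\]
where $\mathsf{C}_{N}(\l)$ collects all $2^{N}-1$ mixed terms, each of which contains at least one factor $(\mathsf{M}_{\l}(\beta\mathsf{K}))^{2}$. Because that factor is weakly compact (by Theorem \ref{theo:weak-com} applied to $\beta\mathsf{K}$ together with a domination argument), two such factors produce a norm-compact operator via the Dunford--Pettis property of $L^{1}$, so $\mathsf{C}_{N}(\l)$ is compact for $N\geq 4$. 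Moreover, the estimate of Lemma \ref{lem:norm2} adapted to $\beta\mathsf{K}$ supplies a $1/|\l|$ decay of $(\mathsf{M}_{\l}(\beta\mathsf{K}))^{2}$ that makes $\mathsf{C}_{N}(\l)$ absolutely integrable in $|\mathrm{Im}\l|$ along any vertical line $\mathrm{Re}\l=-\eta$ with $\eta\in(0,\l_{\beta})$. Plugging this decomposition into the Laplace inversion formula for $U_{\H}(t)f$ on $\D(\T_{\H}^{2})$ and shifting the contour from $\mathrm{Re}\l=\varepsilon$ to $\mathrm{Re}\l=-\eta$, one picks up the residues at the finitely many eigenvalues of $\T_{\H}$ in the strip --- yielding $\mathbf{P}_{0}f$ at $0$ and finite-rank $O(e^{-\eta t})$ contributions elsewhere --- plus a shifted line integral. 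The $\mathsf{C}_{N}$-part of that integral produces a compact operator multiplied by $e^{-\eta t}$, while the $\mathsf{L}_{\l}^{N}$-part is controlled in norm by $\bm{c}_{\beta}^{N}e^{2ND\eta/r_{0}}$, which is strictly smaller than $1$ for $\eta$ small and any $N\geq 1$, and can be absorbed by choosing $N$ large. Combining these bounds yields, for $t$ large, a decomposition $U_{\H}(t)=\mathsf{K}(t)+\mathsf{S}(t)$ with $\mathsf{K}(t)$ compact and $\|\mathsf{S}(t)\|_{\mathscr{B}(X)}\leq Ce^{-\eta t}$, giving $r_{\mathrm{ess}}(U_{\H}(t))\leq e^{-\eta t}$, hence $\omega_{\mathrm{ess}}(U_{\H})\leq -\eta$.

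The main obstacle is the handling of $\mathsf{L}_{\l}^{N}$ in the Laplace inversion: while its operator norm shrinks geometrically in $N$, it does \emph{not} decay in $|\mathrm{Im}\l|$, since reflections are $L^{1}$-isometric and provide no time-averaging. Consequently the contour integral of $\mathsf{L}_{\l}^{N}$ along $\mathrm{Re}\l=-\eta$ is not absolutely convergent in operator norm and the naive Laplace argument fails for that piece. The cure I envision is to resum: recognise $\sum_{N\geq N_{0}}\mathsf{L}_{\l}^{N}=\mathsf{L}_{\l}^{N_{0}}\Rs(1,\mathsf{L}_{\l})$, which is uniformly bounded on the shifted line, and then reinterpret the corresponding contribution to $U_{\H}(t)$ via the Dyson--Phillips identity \eqref{eq:LaplaceDP} as coming from iterates of the sub-stochastic ``pure-reflection'' semigroup associated with $\alpha\mathsf{R}$. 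Because $\alpha\mathsf{R}$ is strictly sub-stochastic wherever $\beta>0$, one expects the latter semigroup to have type $\leq-\l_{\beta}$, replacing the failed complex-analytic estimate by a direct $C_{0}$-semigroup bound. Making this resummation rigorous --- in particular constructing the reflection semigroup through the characteristics of the field $\mathcal{V}$ and quantifying its mass loss --- is the hard step, and is where the geometry of the reflection (e.g. billiard-type recurrence properties of $\mathcal{V}$) will be genuinely used.
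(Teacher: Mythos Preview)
This statement is labelled a \emph{Conjecture} in the paper and is not proved there. The paper offers only the preceding paragraph --- exponential decay on $\D(\T_{\H}^{2})$ via Laplace inversion with a constant depending on $f,\T_{\H}f,\T_{\H}^{2}f$ --- as supporting evidence, and then records the operator-norm statement as an open problem. There is therefore no proof to compare your attempt against.

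Your outline is a reasonable strategy and is in the spirit of the heuristics the authors sketch, but the obstacle you flag in your last two paragraphs --- that $\mathsf{L}_{\l}^{N}$ carries no decay in $|\mathrm{Im}\l|$ because reflections are $L^{1}$-isometric, so the shifted contour integral is not absolutely convergent in operator norm --- is precisely the reason the statement remains conjectural. Your proposed cure (resum the pure-reflection part into a sub-stochastic ``billiard'' semigroup and estimate its type via mass loss) is plausible but, as you acknowledge, genuinely depends on the dynamics of $\mathcal{V}$ and is not carried out anywhere in the paper.

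One technical slip worth noting: your claim that $\mathsf{C}_{N}(\l)$ is \emph{compact} for $N\geq 4$ is not justified as written. Writing $(\mathsf{M}_{\l}\H)^{2N}=(A+B)^{N}$ with $A=(\mathsf{M}_{\l}(\beta\mathsf{K}))^{2}$ weakly compact and $B=\mathsf{L}_{\l}$, the mixed part $\mathsf{C}_{N}(\l)$ contains words such as $AB^{N-1}$ with exactly one $A$-factor; those are only weakly compact. Dunford--Pettis gives compactness only for words with at least two weakly compact factors. This is repairable (iterate once more, or observe that weak compactness already suffices to control $r_{\mathrm{ess}}\bigl((\mathsf{M}_{\l}\H)^{2N}\bigr)$ at the resolvent level via strict singularity in $L^{1}$), but the assertion as stated is incorrect and would need adjustment in any genuine proof.
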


 \section{Proof of Theorem \ref{theo:weak-com}}\label{appCom}
 
 We give here a simple proof of Theorem \ref{theo:weak-com} in the case in which $\Omega$ is of class $\mathcal{C}^{1,\alpha}$ with $\alpha >0$. We actually prove that
\begin{equation}\label{eq:hmh}
\H\mathsf{M}_{\l}\H \::\:\lp \to \lm \text{ is weakly-compact for any } \mathrm{Re}\l \geq 0.\end{equation}
As in the proof of \cite[Theorem 5.1]{LMR} by approximation and domination arguments, to prove the result, we can restrict ourselves without loss of generality to the case in which
$$V:=\{v \in \R^{d}\;;\;r_{0} \leq |v| \leq R_{0}\}, \qquad \H \varphi(x,v)=\int_{\Gamma_{+}(x)}\varphi(x,v')\bm{\mu}_{x}(\d v'), \qquad \varphi \in \lp,$$
where $R_{0} >0$. This of course corresponds to the case $\bm{k}(x,v,v') \equiv 1$. Notice that, being $\bm{m}$ a locally finite Borel measure over $\R^{d}$, one has $\bm{m}(V) <\infty.$ In such a case, Proposition \ref{lemHLH} asserts that
\begin{equation*}
\mathsf{HM_{\l}H}\varphi(x,v)=\int_{\Gamma_{+}}\mathscr{J}_{\l}(x,v,y,w)\varphi(y,w)\,|w\cdot n(y)|\bm{m}(\d w)\pi(\d y)\end{equation*}
with
\begin{equation*}
\mathscr{J}_{\l}(x,v,y,w)=\mathcal{J}(x,y)\int_{r_{0}}^{R_{0}}\varrho\, \exp\left(-\lambda\frac{|x-y|}{\varrho}\right)\frac{\bm{m}_{0}(\d\varrho)}{|\S^{d-1}|}\end{equation*}
for any $(x,v) \in \Gamma_{-},$ $(y,w) \in \Gamma_{+}.$ Thus,
$$\left|\mathscr{J}_{\l}(x,v,y,w)\right| \leq \mathcal{J}(x,y)\int_{r_{0}}^{R_{0}}\varrho\,\frac{\bm{m}_{0}(\d\varrho)}{|\S^{d-1}|} \leq C_{0}\mathcal{J}(x,y)$$
since $\bm{m}_{0}([r_{0},R_{0}]) < \infty$ (recall that $\bm{m}(V) <\infty$). By a domination argument, it is enough to prove the weak compactness of the operator $\mathsf{K} \in \mathscr{B}(\lp,L^{1}(\pO))$ given by
$$\mathsf{K}\varphi(x)=\int_{\Gamma_{+}}\mathcal{J}(x,y)\varphi(y,w)\,|w\cdot n(y)|\bm{m}(\d w)\pi(\d y), \qquad \varphi \in \lp, \quad x \in \pO.$$
This operator can be written as
$$\mathsf{K}=\mathcal{J}_{0}\mathcal{P}$$
where $\mathcal{P} \in \mathscr{B}(\lp,L^{1}(\pO))$ is the projection operator
$$\mathcal{P}\varphi(x)=\int_{\Gamma_{+}(x)}\varphi(x,w)\bm{\mu}_{x}(\d w)\,, \qquad x \in \pO\,,\,\quad \varphi \in \lp$$
and $\mathcal{J}_{0} \in \mathscr{B}(L^{1}(\pO))$ is given by
$$\mathcal{J}_{0}\psi(x)=\int_{\partial\Omega}\mathcal{J}(x,y)\psi(y)\pi(\d y), \qquad x \in \pO\,,\,\quad \psi \in L^{1}(\pO).$$ 
Let us now show that $\mathcal{J}_{0} \in \mathscr{B}(L^{1}(\pO))$ is weakly compact which will give the result. Again, using Lemma \ref{lem:1} together with a domination argument, it is enough to prove the weak compactness of the operator $\mathcal{J}_{1} \in \mathscr{B}(L^{1}(\pO))$ given by
$$\mathcal{J}_{1}\psi(x)=\int_{\partial\Omega}|x-y|^{1+2\alpha-d}\psi(y)\pi(\d y), \qquad x \in \pO\,,\,\quad \psi \in L^{1}(\pO).$$
We note that its kernel is of order strictly less than $d-1$ since $\alpha> 0$. This is done by an approximation argument introducing, for any $\e >0$, 
$$\mathcal{J}_{1}^{\e}\psi(x)=\int_{\pO}\bm{1}_{|x-y| \geq \e}|x-y|^{1+2\alpha-d}\psi(y)\pi(\d y), \qquad x \in \pO\,,\,\quad \psi \in L^{1}(\pO).$$
For any $\e >0$, $\mathcal{J}_{1}^{\e}$ has a bounded kernel and  is clearly weakly compact  while
$$\lim_{\e\to0}\left\|\mathcal{J}_{1}^{\e}-\mathcal{J}_{1}\right\|_{\mathscr{B}(L^{1}(\pO))}=0$$
because the kernel of $\mathcal{J}_{1}^{\e}-\mathcal{J}_{1}$ is supported on $\{|x-y| < \e\}$ (see \cite[Proposition 3.11 \& Exercise 1, page 121-123]{folland}). This proves \eqref{eq:hmh} and  achieves the proof of Theorem \ref{theo:weak-com}.


\begin{thebibliography}{GVdMP}
\bibitem{aoki}
\textsc{K. Aoki, F. Golse,} On the speed of approach to equilibrium for a collisionless gas, \textit{Kinet. Relat. Models} {\bf 4} (2011), 87--107.

 
\bibitem{arendt}
\textsc{W. Arendt, Ch. J.~K. Batty, M. Hieber, F. Neubrander,} \textbf{Vector-valued Laplace transforms and Cauchy problems},
\textit{Monographs in Mathematics}, \textbf{96}, Birkh\"auser Verlag, Basel, 2001. 


 
 \bibitem{luisa}
\textsc{L. Arlotti}, Explicit transport semigroup associated to abstract boundary conditions, \textit{Discrete Contin. Dyn. Syst. A}, 2011, \emph{Dynamical systems, differential equations and applications. 8th AIMS Conference. Suppl. Vol. I,} 102--111.
 
\bibitem{ABL0}
\textsc{L. Arlotti, J. Banasiak, B. Lods,} A new approach to transport equations associated to a regular field: trace results and well-posedness, \textit{Med. J. Math.} {\bf 6} (2009) 367?402. 
 
\bibitem{ABL}
\textsc{L. Arlotti, J. Banasiak, B. Lods,} On general transport equations with abstract boundary conditions. The case of divergence free force field,
\textit{Mediterr. J. Math.} {\bf 8} (2011) 1--35.

\bibitem{bernou1}
\textsc{A. Bernou}, A semigroup approach to the convergence rate of a collisionless gas,
\textit{Kinet. Relat. Models} {\bf 13} (2020), 1071--1106.

\bibitem{bernou3}
\textsc{A. Bernou}, Convergence towards the steady state of a collisionless gas with Cercignani-Lampis boundary condition, \texttt{https://arxiv.org/abs/2106.06284}, 2021.

\bibitem{bernou2}
\textsc{A. Bernou, N. Fournier}, A coupling approach for the convergence to equilibrium for a collisionless gas, \texttt{https://arxiv.org/abs/1910.02739}, 2019.


\bibitem{CL}
\textsc{ C. Cercignani, M. Lampis}, Kinetic models for gas-surface interactions, \textit{Transport Theory Statist.
Phys.}, {\bf 1} (1971), 101--114.


\bibitem{ces1}
\textsc{M. Cessenat}, Th\'eor\`emes de traces $L_p$ pour les espaces
de fonctions de la neutronique. \textit{C. R. Acad. Sci. Paris.},
Ser. I {\bf 299} 831--834, 1984.
\bibitem{ces2}
\textsc{M. Cessenat}, Th\'eor\`emes de traces pour les espaces de
fonctions de la neutronique. \textit{C. R. Acad. Sci. Paris.}, Ser.
I {\bf 300} 89--92, 1985.

\bibitem{clement}
\textsc{Ph. Cl\'ement, H. J. A. M. Heijmans, S. Angenent, C. J. van Duijn, B. de Pagter,}
\textbf{\textit{One-parameter semigroups,}} CWI Monographs, 5. North-Holland Publishing Co., Amsterdam, 1987.

\bibitem{comets}
\textsc{F. Comets, S. Popov, G. M. Sch\"utz, M. Vachkovskaia,} Billiards in a general domain with random reflections, \textit{Arch. Ration. Mech. Anal.} {\bf 191} (2009), 497--537.
 
\bibitem{costa}
\textsc{C. da Costa, M. V. Menshikov, A. R. Wade,} Stochastic billiards with Markovian reflections in generalized parabolic domains, \texttt{https://arxiv.org/abs/2107.13976}, 2021.
 
\bibitem{dolbeault}
\textsc{J. Dolbeault, C. Mouhot, C. Schmeiser,} Hypocoercivity for linear kinetic equations conserving mass, \textit{Trans. Amer. Math. Soc.} {\bf 367} (2015),  3807--3828.

\bibitem{DV05}
\textsc{L. Desvillettes \& C. Villani,}  On the trend to global equilibrium for spatially inhomogeneous kinetic systems: the Boltzmann equation, \textit{Invent. Math.} {\bf 159} (2005), 245--316.

\bibitem{eva99}
\textsc{S. N. Evans,} Stochastic billiards on general tables, \textit{Ann. Appl. Probab.} {\bf 11} (2001), 419--437.

\bibitem{fetique}
\textsc{N. F\'etique} Explicit Speed of Convergence of the Stochastic Billiard in a Convex Set, in \textit{S\'eminaire de Probabilit\'es L.} Lecture Notes in Mathematics, vol 2252. Springer, Cham., 2019.


\bibitem{folland}
\textsc{G. B. Folland,} \textbf{\textit{Introduction to partial differential equations,}} Second edition. Princeton University Press, Princeton, NJ, 1995.
 
\bibitem{kim}
\textsc{J. Jin, C. Kim,} Damping of kinetic transport equation with diffuse boundary condition, preprint, 2020, \texttt{https://arxiv.org/abs/2011.11582}.
 
 
\bibitem{jorgens}
\textsc{K. J\"orgens,} An asymptotic expansion in the theory of neutron transport, \textit{Comm. Pure Appl. Math.} {\bf 11} (1958), 219--242.

 
\bibitem{liu}
\textsc{H. W. Kuo, T. P. Liu, L. C. Tsai,} Free molecular flow with boundary effect, \textit{Comm. Math. Phys.} {\bf 318}
(2013), 375--409.

\bibitem{LMR}
\textsc{B. Lods, M. Mokhtar-Kharroubi, R. Rudnicki}, Invariant density and time asymptotics for collisionless kinetic equations with partly diffuse boundary operators, \textit{Ann. I. H. Poincar\'e -- AN}, {\bf 37} (2020) 877--923.

 
\bibitem{LMKJMPA}
\textsc{B. Lods, M. Mokhtar-Kharroubi,}  Convergence rate to equilibrium for collisionless transport equations with diffuse boundary operators: A new tauberian approach, submitted for publication, 2021, \texttt{https://arxiv.org/abs/2104.06674}.

 
\bibitem{mmk-laplace}
\textsc{M. Mokhtar-Kharroubi,} Time asymptotic behaviour and compactness in transport theory,
\textit{European J. Mech. B Fluids} {\bf 11} (1992), 39--68.

\bibitem{mmk}
\textsc{M. Mokhtar-Kharroubi,} \textit{\textbf{Mathematical topics in neutron transport theory, New aspects.}}  Series on Advances in Mathematics for Applied Sciences, 46. World Scientific Publishing Co., Inc., River Edge, NJ, 1997.


\bibitem{MKR}
\textsc{M. Mokhtar-Kharroubi, R. Rudnicki}, 
On asymptotic stability and sweeping of collisionless kinetic equations. \textit{Acta Appl. Math.} \textbf{147} (2017), 19--38. 


\bibitem{neuman}
\textsc{C. Mouhot, L. Neumann}, Quantitative perturbative study of convergence to equilibrium for collisional kinetic models in the torus, \textit{Nonlinearity} {\bf 19} (2006), 969--998.

\bibitem{mmkseifert}
\textsc{M. Mokhtar-Kharroubi, D. Seifert}, Rates of convergence to equilibrium for collisionless kinetic equations in slab geometry, \textit{J. Funct. Anal.} {\bf 275} (2018),  2404--2452. 


\bibitem{desong}
\textsc{D. Song,} Some notes on the spectral properties of $C_{0}$-semigroups
generated by linear transport operators, \textit{Transport Theory Statist.
Phys.}, {\bf 26} (1997), 233--242.

\bibitem{tag}
\textsc{T. Tsuji, K. Aoki, F. Golse,} Relaxation of a free-molecular gas to equilibrium caused by interaction with vessel wall, \textit{J. Stat. Phys.} {\bf 140} (2010), 518--543.
 
\bibitem{voigt}
\textsc{J. Voigt}, \textit{\textbf{Functional analytic treatment of the
initial boundary value problem for collisionless gases}},
Habilitationsschrift, M\"unchen, 1981.

\bibitem{weis}
\textsc{L. Weis, V.  Wrobel,} Asymptotic behavior of $C_{0}$-semigroups in Banach spaces, \textit{Proc. Amer. Math. Soc.} {\bf 124} (1996), 3663--3671.

 


\end{thebibliography}
\end{document}